\documentclass[12pt]{amsart}
\usepackage{amsmath}
\usepackage{amssymb}
\usepackage{latexsym}
\usepackage{enumitem}

\newcommand{\Zint}{\mathbb {Z}}    
     
\newcommand{\C}{\mathbb {C}}     
\newcommand{\halmos}{\rule{5pt}{5pt}}

\numberwithin{equation}{section}
\newtheorem{thm}{Theorem}[section]
\newtheorem{prop}[thm]{Proposition}

\newtheorem{cor}[thm]{\bf Corollary}
\theoremstyle{definition} 
\newtheorem{df}[thm]{\bf Definition}
\theoremstyle{remark}

\begin{document}
\title[]{On $q$-middle convolution and generalized $q$-hypergeometric equation}
\author{Yumi Arai}
\address{Department of Mathematics, Ochanomizu University, 2-1-1 Otsuka, Bunkyo-ku, Tokyo 112-8610, Japan}
\email{araiyumi.math@gmail.com}
\subjclass[2020]{33D15,39A13}
\keywords{hypergeometric function, $q$-hypergeometric equation, middle convolution, $q$-integral}
\begin{abstract}
We investigate higher-order $q$-difference equations obtained by successive applications of a $q$-analogue of addition and the $q$-middle convolution.
We construct an $n$-dimensional system of linear $q$-difference equations and derive a single $q$-difference equation 
whose coefficients are polynomials of degree at most $1$ in the independent variable.
Under certain conditions on the parameters, the resulting equation is a generalized $q$-hypergeometric equation of order $n$.
We also obtain a multiple Jackson integral representation of solutions and give sufficient conditions for its convergence.
By suitably choosing the functions appearing in the integrand and specializing the parameters, we obtain $q$-hypergeometric series solutions of type ${}_n \phi_{n-1}$.
\end{abstract}
\maketitle

\section{Introduction}
Middle convolution was originally introduced by Katz \cite{Katz} in the context of rigid local systems.
A linear algebraic formulation applicable to Fuchsian systems of differential equations was subsequently developed by Dettweiler and Reiter \cite{DR1, DR2}.
Consider the Fuchsian system
\begin{equation}
\frac{dY}{dx}=\left( \frac{A_0}{x-t_0}+\frac{A_1}{x-t_1} + \dots + \frac{A_N}{x-t_N} \right) Y, \label{eq:original}
\end{equation}
where $Y$ is an $m$-dimensional column vector and $A_0, A_1, \dots ,A_N$ are $m \times m$ constant matrices.
For a parameter $\lambda \in \mathbb{C}$, the convolution associates with the tuple $(A_0, A_1, \dots , A_N)$ a tuple of $(N+1)m \times (N+1)m$ matrices $(F_0, F_1, \dots , F_N)$ as follows:
\begin{equation}
F_{0} = \left(
\begin{array}{@{\,}c@{\,}ccc@{\,}}
A_{0} + \lambda I_{m} & A_1 &  \cdots & A_N \\
0 & 0 &\cdots & 0 \\
\vdots & \vdots & & \vdots \\
0 & 0 & \cdots & 0 
\end{array}
\right) , \;
F_{1} = \left(
\begin{array}{@{\,}c@{\,}c@{\,}cc@{\,}}
0 & 0 & \cdots & 0 \\
A_{0} & A_1 + \lambda I_{m} & \cdots & A_N \\
\vdots & \vdots & & \vdots \\
0 & 0 & \cdots & 0 
\end{array}
\right) , \dots ,
\label{eq:Fi}
\end{equation}
where $I_m $ is the $m \times m$ identity matrix.
The matrices $F_i$ $(i = 0, 1, \ldots, N)$ leave invariant the subspaces
\begin{equation*}
\mathcal{K} = \left(
\begin{array}{@{\,}c@{\,}}
\mathrm{ker}A_{0} \\
\vdots \\
\mathrm{ker}A_{N}
\end{array}
\right), \quad 
\mathcal{L} = \mathrm{ker}(F_{0} + F_{1} + \cdots + F_{N}) 
\end{equation*}
of $\mathbb{C}^{(N+1) m}$.
Let $\overline{F}_i$ denote the linear transformation induced by $F_i$ on the quotient space $\mathbb{C}^{(N+1) m}/(\mathcal{K} + \mathcal{L})$.
The transformation $(A_0, \dots ,A_N ) \mapsto (\overline{F}_0 , \dots ,\overline{F}_N )$ is called the middle convolution with parameter $\lambda$ and is denoted by $mc_{\lambda }$.
Moreover, the convolution is closely related to Euler's integral transformation of solutions of Fuchsian systems.

A $q$-analogue of the middle convolution was introduced by Sakai and Yamaguchi \cite{SY} for systems of linear $q$-difference equations.
In our previous paper \cite{AT2}, we reformulated their $q$-convolution and $q$-middle convolution and introduced $q$-analogues of addition, which are related to gauge-transformations.
The $q$-middle convolution is also associated with a $q$-integral transformation and can therefore be used to construct $q$-integral representations of solutions to the transformed $q$-difference equations.
In \cite{AT2}, we also investigated and established sufficient conditions for the convergence of the $q$-integral.

The $q$-hypergeometric equation is a second-order linear $q$-difference equation satisfied by the $q$-hypergeometric function ${}_2 \phi_1$.
A standard form of the $q$-hypergeometric equation, also known as Heine's $q$-hypergeometric equation, is given as
\begin{equation}
(x - q)g(x/q) + (abx - c)g(qx) - \{(a + b)x - q - c \}g(x) = 0 . \label{eq:Heine}
\end{equation}
Roques \cite{Ro} introduced the generalized $q$-hypergeometric operator
\begin{equation*}
\mathcal{L}_q (\underline{a};\underline{b};\lambda)
= \prod_{j = 1}^{s} \left(\frac{b_j}{q}\sigma_q - 1 \right) - x\lambda \prod_{i = 1}^{r}(a_i\sigma_q - 1),
\end{equation*}
where
$\underline{a}=(a_1,\ldots,a_r)\in(\mathbb{C}\setminus{\{ 0\}})^r$, $\underline{b}=(b_1,\ldots,b_s)\in(\mathbb{C}\setminus{\{ 0\}})^s$, $\lambda\in\mathbb{C}\setminus{\{ 0\}}$, and $\sigma_qg(x)=g(qx)$.
In the case $r = s = n$, by expanding this operator, the equation
\begin{equation*}
\mathcal{L}_q (\underline{a};\underline{b};\lambda) g(x) = 0 
\end{equation*}
is expressed as an $n$-th order linear $q$-difference equation whose coefficients are polynomials of degree at most $1$ in $x$. Moreover, the coefficients of $g(q^n x)$ and $g(x)$ have nonzero leading and constant terms.
In our previous papers \cite{AT, AT2}, we applied $q$-convolution to a certain specific $q$-difference equation and obtained Heine's $q$-hypergeometric equation and its $q$-integral representations of solutions.
In \cite{AT2}, we applied the $q$-analogue of the addition and the reformulated $q$-middle convolution to Heine's $q$-hypergeometric equation.
As a result, we obtained a generalized $q$-hypergeometric equation of order $3$ of the form
\begin{equation}
(a_3 x - b_3)g(q^3 x) + (a_2 x - b_2)g(q^2 x) + (a_1 x - b_1)g(qx) + (a_0 x - b_0)g(x) = 0 , \label{eq:genqhge3-intro}
\end{equation}
where $a_3b_3a_0b_0 \neq 0$, together with a double $q$-integral representation of its solutions.
We also obtained a $q$-hypergeometric series solution from this integral representation.
It is then natural to ask what happens if we further apply the addition and the $q$-middle convolution repeatedly to this third-order equation, and whether an equation of arbitrary order $n$ and its $q$-integral representations of solutions can be obtained in this way. 

The purpose of the present paper is to investigate this problem.
Starting from the third-order equation obtained in our previous paper, we repeatedly apply compositions of the addition and the $q$-middle convolution.
We show inductively that this procedure gives an $n$-dimensional system of linear $q$-difference equations and explicitly determine its coefficient matrices.
We then eliminate the other unknown functions from the system to obtain a single $q$-difference equation for one of its components.
We show that this single equation has the form
\begin{align}
&(\alpha x - 1) g(q^n x) + (U_{n-1} x + W_{n-1}) g(q^{n-1} x) +  (U_{n-2} x + W_{n-2}) g(q^{n-2} x) \label{eq:gn-Intro}\\
&\quad + \cdots + (U_{2} x + W_{2}) g(q^{2} x) + (U_{1} x + W_{1}) g(q x) + A(U_{0} x - 1) g(x) = 0, \nonumber
\end{align}
where $\alpha \neq 0$ and $A \neq 0$.
Although explicit formulas for the intermediate coefficients $U_j$, $W_j$ (j=1, \ldots, n-1) are not obtained in general, we prove that all coefficients of equation \eqref{eq:gn-Intro} are polynomials of degree at most $1$ in $x$.
Calculations for low-order cases suggest that $U_0$ has a simple explicit form and is nonzero.
Thus, under the assumption $U_0 \neq 0$, equation \eqref{eq:gn-Intro} is a generalized $q$-hypergeometric equation of order $n$.
Explicit single equations for the low-order cases up to order $6$ are given in Appendix \ref{sec:geneq4-6}.

We also investigate a $q$-integral representation of solutions to the generalized $q$-hypergeometric equation of order $n$.
Since the $q$-middle convolution is associated with a $q$-integral transformation, the successive applications of the $q$-middle convolution lead to a multiple Jackson integral representation of solutions.
We show that, by appropriately choosing the functions appearing in the integrand and specializing the parameters, this multiple $q$-integral representation yields $q$-hypergeometric series solutions.
We obtain five new $q$-hypergeometric series solutions to the third-order equation, in addition to the one obtained in our previous paper. 
We further obtain $q$-hypergeometric series solutions for its generalization to arbitrary order $n$.

This paper is organized as follows.
In Section 2, we review the $q$-analogue of addition, the $q$-middle convolution, and the convergence of the associated $q$-integral transformation.
In Section 3, we review the constructions of Heine's $q$-hypergeometric equation and the generalized $q$-hypergeometric equation of order $3$ obtained in our previous paper.
In Section 4, we construct an $n$-dimensional system of $q$-difference equations by repeatedly applying the addition and the $q$-middle convolution, and establish its matrix form by induction.
We also derive the corresponding single $q$-difference equation and show that it is a generalized $q$-hypergeometric equation of order $n$.
In Section 5, we investigate a $q$-integral representation of its solutions and show how it yields $q$-hypergeometric series solutions.

Throughout this paper, we assume that the complex number $q$ satisfies $0 < |q| < 1$. 
For $k\in\mathbb{Z}_{\geq 0}$, the $q$-Pochhammer symbols are defined by
\[
(a;q)_k=\prod_{j=0}^{k-1}(1-aq^j),
\qquad
(a;q)_{\infty} = \prod_{j=0}^{\infty}(1-aq^j),
\]
where $(a;q)_0=1$. We also use the notation
\[
(a_1,\ldots,a_m;q)_k = \prod_{i=1}^{m}(a_i;q)_k,
\qquad k\in\mathbb{Z}_{\geq 0}\cup\{\infty\} .
\]
The $q$-hypergeometric series ${}_n\phi_{n-1}$ is defined by
\[
{}_n\phi_{n-1}
\left(
{a_1,\ldots,a_n\atop b_1,\ldots,b_{n-1}};q,z
\right)
=
\sum_{k=0}^{\infty}
\frac{(a_1,\ldots,a_n;q)_k}
{(q,b_1,\ldots,b_{n-1};q)_k}z^k.
\]

\section{$q$-analogue of addition, $q$-middle convolution, and convergence }
In \cite{AT2}, we reformulated the $q$-middle convolution.
In this section, we review the definitions of the reformulated $q$-convolution and $q$-middle convolution, and the theorems concerning a $q$-integral representation and its convergence.
For the $q$-middle convolution introduced by Sakai and Yamaguchi, we refer the reader to their paper \cite{SY}.

To reformulate the $q$-convolution in a simpler form, we rewrote the associated linear $q$-difference equation as follows:
\begin{equation}
\frac{Y(q x)  - Y(x )}{-x} = \Big[ \frac{B_{0}}{x} + \sum^{N}_{i = 1}\frac{B_{i}}{x -b_{i}} \Big] Y(x) = \Big[ \sum^{N}_{i = 0}\frac{B_{i}}{x -b_{i}} \Big] Y(x)  .
\label{eq:YqxBYx2}
\end{equation}
This equation is equivalent to the linear $q$-difference equation 
\begin{equation}
 Y(qx)= \Big( B_{\infty}+\sum_{i=1}^N \frac{B_i}{1-x/b_i} \Big) Y(x) .
\label{eq:YqxSY}
\end{equation}
used by Sakai and Yamaguchi under the conditions $B_0 = I_m - B_\infty - B_{1} - \dots -B_N$ and $b_0=0$.

We introduce a $q$-analogue of addition.
If the function $Y(x) \in \mathbb{C}^m$ satisfies $Y(qx) =B(x)Y(x) $, then the function 
\begin{equation}
Z(x)= x^{\mu } Y(x)
\label{eq:ZY}
\end{equation}
satisfies $Z(qx) = q^{\mu } B(x)Z(x) $.
In particular, if $Y(x)$ is a solution to equation (\ref{eq:YqxBYx2}), then the function $Z(x)$ in equation (\ref{eq:ZY}) satisfies
\begin{equation*}
\frac{Z(q x)  - Z(x )}{-x} = \Big[ \frac{(1-q^{\mu} )I_m + q^{\mu }B_0}{x} + \sum^{N}_{i = 1}\frac{q^{\mu } B_{i}}{x -b_{i}} \Big] Z(x) .
\end{equation*}
Thus, we define the addition $add _{\mu } $ by
\begin{equation*}
add _{\mu } : (B_0, B_1, \ldots, B_N) \mapsto ((1-q^{\mu} )I_m + q^{\mu }B_0 , q^{\mu }B_1, \ldots, q^{\mu }B_N) .
\end{equation*}
Let $c^{SY}_{\lambda}$ denote the $q$-convolution introduced by Sakai and Yamaguchi, and let $c^{q}_{\lambda}$ denote the reformulated $q$-convolution.
We reformulated the $q$-convolution by
\begin{equation}
c^{q}_{\lambda} = add_{-\lambda} \circ c^{SY}_{\lambda} .  
\end{equation}
\begin{df}[Reformulation of $q$-convolution \cite{AT2}] \label{def:tqc}
Let $ (  B_{0},  B_{1} ,\dots ,B_N ) $ be the tuple of $m\times m $ matrices and $\lambda \in \mathbb{C} $.
We define the tuple $( G_0,  G_1, \dots , G_N )$ of $(N+1)m \times (N+1)m$ matrices as follows:
\begin{align} \label{eq:bG} 
    G_0 &= \left( 
    \begin{array}{ccccc}
       q^{-\lambda } B_0 + (1-q^{-\lambda})I_m & q^{-\lambda }B_1 & \cdots &  \cdots & q^{-\lambda } B_N \\[5pt]
        0 & 0  & \cdots &  \cdots & 0 \\
        \vdots &  \vdots  & {} & {} & \vdots \\
    \end{array}
\right)  \\
& \qquad \qquad \qquad \vdots  \nonumber \\
    G_i &= \left( 
    \begin{array}{ccccc}
        {} & {} & O & {} & {} \\[5pt]
       q^{-\lambda } B_0 & \cdots & q^{-\lambda }B_i + (1-q^{-\lambda})I_m & \cdots & q^{-\lambda } B_N \\[5pt]
        {} & {} & O & {} & {}
    \end{array}
\right)
    {\scriptstyle (i+1)}  \nonumber  \\
& \qquad \qquad \qquad \vdots  \nonumber \\
    G_N &= \left( 
    \begin{array}{ccccc}
        \vdots &   {} & {} & \vdots  & \vdots \\[5pt]
        0 & \cdots  & \cdots & 0 & 0 \\[5pt]
       q^{-\lambda } B_0 & \cdots &  \cdots & q^{-\lambda }B_{N-1} & q^{-\lambda } B_N + (1-q^{-\lambda})I_m 
    \end{array}
\right) .\nonumber
\end{align}
The correspondence $c ^q _\lambda : ( B_{0},  B_{1} ,\dots ,B_N ) \mapsto ( G_0,  G_1, \dots , G_N )$ is called the $q$-convolution.
\end{df}
The $q$-convolution in Definition \ref{def:tqc} is related to a $q$-integral transformation on the linear $q$-difference equations in the form of equation (\ref{eq:YqxBYx2}).
Let $\xi \in \mathbb{C} \setminus \{ 0 \}$.
The Jackson integral is defined as the infinite sum
\begin{equation*}
 \int^{\xi \infty }_{0}f(s) \, d_{q}s = (1-q)\sum^{\infty}_{n=-\infty}q^{n} \xi f(q^n \xi ).
\end{equation*}
Let $K_{\lambda}(x, s) $ be a function which satisfies
\begin{equation}
q^{\lambda } K_{\lambda}(qx, s) = K_{\lambda}(x, s/q)  = \frac{ x - q^{\lambda } s}{x - s} K_{\lambda}(x, s) . \label{eq:tPlambda}
\end{equation}
Then, the functions
\begin{equation}
K^{(1)}_{\lambda}(x, s) =  x^{-\lambda } \frac{(q^{\lambda+1}s/x;q)_{\infty}}{(qs/x;q)_{\infty}} , \quad K^{(2)}_{\lambda}(x, s) =  s^{-\lambda } \frac{(x/s;q)_{\infty}}{(q^{-\lambda }x/s;q)_{\infty}}
 \label{eq:tP1tP2}
\end{equation}
satisfy equation (\ref{eq:tPlambda}).

The following theorem indicates a correspondence of the solutions to $q$-difference equations associated with the $q$-convolution.
\begin{thm}$($c.f. \cite[Theorem 2.1]{SY}, \cite[Theorem 3.2]{AT2}$)$ \label{thm:qcint} 
Set $b_0=0$.
Let $Y(x)$ be a solution to
\begin{equation}
\frac{Y(q x)  - Y(x )}{-x} = \Big[ \sum^{N}_{i = 0}\frac{B_{i}}{x -b_{i}} \Big] Y(x) ,
\label{eq:YqxBYx}
\end{equation}
and set 
\begin{equation}
 \widetilde{Y}_{i}(x) = \int^{\xi \infty}_{0}\frac{K_{\lambda}(x, s)}{s-b_{i}}Y(s) \, d_{q}s, \; (i=0,\dots ,N), \; \widetilde{Y}(x) = \begin{pmatrix} \widetilde{Y}_{0}(x) \\ \vdots \\ \widetilde{Y}_{N}(x) \end{pmatrix} . \label{eq:qcintadd}
\end{equation}
Assume that the variable $\xi $ is independent of the variable $x$ or it is proportional to $x$ $($i.e.~$\xi =Ax$ where $A $ is independent of $x$$)$.
If every element of $\widetilde{Y}_{i} (x) $ converges for $i=0,1 , \dots ,N$ and 
\begin{equation}
\lim _{K \to -\infty} K_{\lambda}(x, q^{K-1} \xi)Y(q^{K} \xi) =0, \; \lim _{L \to +\infty } K_{\lambda}(x, q^{L} \xi)Y(q^{L+1} \xi) =0,
\label{eq:tPKLlim0}
\end{equation}
then the function $\widetilde{Y}(x)$ satisfies
\begin{equation*}
\frac{\widetilde{Y}(q x)  - \widetilde{Y}(x )}{-x} = \Big[ \sum^{N}_{i = 0}\frac{G_{i}}{x -b_{i}} \Big] \widetilde{Y}(x) .
\end{equation*}
\end{thm}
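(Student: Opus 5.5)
The plan is to compute the left-hand side $\bigl(\widetilde{Y}_i(qx)-\widetilde{Y}_i(x)\bigr)/(-x)$ componentwise from the kernel relation \eqref{eq:tPlambda}. We then eliminate the one integral that is not of the form $\widetilde{Y}_j$ by a $q$-summation-by-parts identity, which is where the equation \eqref{eq:YqxBYx} for $Y$ and the boundary hypotheses \eqref{eq:tPKLlim0} enter. The assumption that $\xi$ is either independent of $x$ or of the form $\xi=Ax$ is what makes $\widetilde{Y}_i(qx)$ an integral over the same lattice $\{q^n\xi\}$. In the case $\xi=Ax$ we first rescale $s$, which is harmless because the lattice $\{q^n\xi\}_{n\in\Zint}$ is invariant under $\xi\mapsto q\xi$. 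So in both cases all integrals below may be taken over the same lattice.

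\emph{Step 1 (the $x$-shift).} By \eqref{eq:tPlambda}, $K_\lambda(qx,s)=q^{-\lambda}K_\lambda(x,s/q)=q^{-\lambda}\frac{x-q^\lambda s}{x-s}K_\lambda(x,s)$. Since $q^{-\lambda}(x-q^\lambda s)-(x-s)=(q^{-\lambda}-1)x$, we get
\begin{equation*}
\frac{\widetilde{Y}_i(qx)-\widetilde{Y}_i(x)}{-x}=(1-q^{-\lambda})\int_0^{\xi\infty}\frac{K_\lambda(x,s)\,Y(s)}{(x-s)(s-b_i)}\,d_qs .
\end{equation*}
The partial fraction identity $\frac{1}{(x-s)(s-b_i)}=\frac{1}{x-b_i}\bigl(\frac{1}{x-s}+\frac{1}{s-b_i}\bigr)$ then turns this into
\begin{equation*}
\frac{1-q^{-\lambda}}{x-b_i}\Bigl(\widetilde{Y}_i(x)+J(x)\Bigr),\qquad J(x)=\int_0^{\xi\infty}\frac{K_\lambda(x,s)Y(s)}{x-s}\,d_qs .
\end{equation*}

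\emph{Step 2 (the $s$-shift; main step).} Set $g(s)=K_\lambda(x,s/q)Y(s)$. The truncated sum $\sum_{n=K}^{L}\bigl[g(q^{n+1}\xi)-g(q^n\xi)\bigr]$ telescopes to $g(q^{L+1}\xi)-g(q^K\xi)$. This equals $K_\lambda(x,q^L\xi)Y(q^{L+1}\xi)-K_\lambda(x,q^{K-1}\xi)Y(q^K\xi)$, which tends to $0$ by \eqref{eq:tPKLlim0}. Because $g(q^{n+1}\xi)=K_\lambda(x,q^n\xi)Y(q^{n+1}\xi)$, this says
\begin{equation*}
\int_0^{\xi\infty}\frac{1}{s}\Bigl[K_\lambda(x,s)Y(qs)-K_\lambda(x,s/q)Y(s)\Bigr]d_qs=0 .
\end{equation*}
This is the step I expect to need the most care. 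One must check that the boundary terms are exactly the ones in \eqref{eq:tPKLlim0}. One must also justify splitting the sum into pieces, which requires the convergence of each $\widetilde{Y}_j$ and of $J$; the convergence of $J$ follows from the identity below once the $\widetilde{Y}_j$ converge.

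Now substitute $Y(qs)=Y(s)-s\sum_j\frac{B_j}{s-b_j}Y(s)$ from \eqref{eq:YqxBYx}, and $K_\lambda(x,s/q)=\frac{x-q^\lambda s}{x-s}K_\lambda(x,s)$. The bracket divided by $s$ becomes
\begin{equation*}
K_\lambda(x,s)\Bigl[-\sum_j\frac{B_jY(s)}{s-b_j}+(q^\lambda-1)\frac{Y(s)}{x-s}\Bigr],
\end{equation*}
using $1-\frac{x-q^\lambda s}{x-s}=\frac{(q^\lambda-1)s}{x-s}$. Hence $(1-q^\lambda)J(x)=-\sum_{j=0}^N B_j\widetilde{Y}_j(x)$. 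The term with $b_0=0$ causes no problem, since $1/s$ is exactly the integrand weight of $\widetilde{Y}_0$.

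\emph{Step 3 (assembly).} Since $\frac{1-q^{-\lambda}}{1-q^\lambda}=-q^{-\lambda}$, we obtain
\begin{equation*}
\frac{\widetilde{Y}_i(qx)-\widetilde{Y}_i(x)}{-x}=\frac{1}{x-b_i}\Bigl[(1-q^{-\lambda})\widetilde{Y}_i(x)+q^{-\lambda}\sum_{j=0}^N B_j\widetilde{Y}_j(x)\Bigr].
\end{equation*}
The bracket is precisely the $(i+1)$-th block row of $G_i$ in Definition \ref{def:tqc} applied to $\widetilde{Y}$. All other block rows of $G_i$ vanish, so stacking over $i$ gives $\sum_i G_i\widetilde{Y}/(x-b_i)$, which is the claimed equation.
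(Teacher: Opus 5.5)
Your proposal is correct and is essentially the standard proof of this result; the paper does not reproduce it and only cites \cite{AT2}. In that proof the $x$-shift is handled by \eqref{eq:tPlambda} together with partial fractions, and the extra integral $J$ is eliminated by telescoping $K_\lambda(x,s/q)Y(s)$ along $\{q^n\xi\}$, whose boundary terms are exactly the limits assumed in \eqref{eq:tPKLlim0}. One cosmetic point: Step 3 does not need the division by $1-q^{\lambda}$, because $1-q^{-\lambda}=-q^{-\lambda}(1-q^{\lambda})$ turns $(1-q^{\lambda})J=-\sum_j B_j\widetilde{Y}_j$ directly into $(1-q^{-\lambda})J=q^{-\lambda}\sum_j B_j\widetilde{Y}_j$.
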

For the proof of this theorem, see \cite{AT2}. 
The following theorem gives sufficient conditions for the convergence required in Theorem \ref{thm:qcint}.
\begin{thm}$($\cite[Theorem 3.8]{AT2}$)$ \label{thm:suffcondconv}
Set $b_0=0$.
Let $Y(x)$ be a solution to
\begin{equation*}
\frac{Y(q x)  - Y(x )}{-x} = \Big[ \sum^{N}_{i = 0}\frac{B_{i}}{x -b_{i}} \Big] Y(x) .
\end{equation*}
Assume that the integrand of $\widetilde{Y}_{i} (x) $ in equation (\ref{eq:qcintadd}) is well-defined $($i.e.~does not diverge$)$ about $s= q^n \xi $ for any $i \in \{ 0 , 1, \dots , N \}$ and $n \in \mathbb{Z} $.
If the absolute value of any eigenvalue of $I_m - B_0$ is strictly less than $1$ and the absolute value of any eigenvalue of $I_m - B_0 -B_1 - \dots - B_N$ is strictly more than $|q^{\lambda }|$, then the function $\widetilde{Y}(x)$ in equation (\ref{eq:qcintadd}) converges and it satisfies
\begin{equation*}
\frac{\widetilde{Y}(q x)  - \widetilde{Y}(x )}{-x} = \Big[ \sum^{N}_{i = 0}\frac{G_{i}}{x -b_{i}} \Big] \widetilde{Y}(x) .
\end{equation*}
\end{thm}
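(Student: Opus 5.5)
The plan is to reduce Theorem \ref{thm:suffcondconv} to Theorem \ref{thm:qcint}. It suffices to show two things: each $\widetilde{Y}_i(x)$ converges absolutely, and the boundary limits \eqref{eq:tPKLlim0} hold. The Jackson integral is a bilateral sum $\sum_{n\in\mathbb{Z}} q^n\xi\, K_\lambda(x,q^n\xi)Y(q^n\xi)/(q^n\xi-b_i)$, so I will treat the tails $n\to+\infty$ (i.e.\ $s\to 0$) and $n\to-\infty$ (i.e.\ $s\to\infty$) separately. By hypothesis the individual terms are finite, so only these tails matter.

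\emph{Tail $s\to 0$.} Write the equation as $Y(qs)=B(s)Y(s)$ with
\[
B(s)=I_m-s\sum_{i=0}^N \frac{B_i}{s-b_i}.
\]
Then $B(s)\to I_m-B_0$ as $s\to0$, since the $i=0$ term equals $B_0$ and the others vanish like $O(s)$. Hence $Y(q^{n}\xi)=B(q^{n-1}\xi)\cdots B(\xi)Y(\xi)$. Choose $\rho$ with $\max|\mathrm{eig}(I_m-B_0)|<\rho<1$ and a norm adapted to $I_m-B_0$ whose operator norm is $<\rho$. Then for $n\ge n_0$ we get $\|B(q^n\xi)\|\le\rho$, so $\|Y(q^n\xi)\|=O(\rho^n)$. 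On the kernel side, I use \eqref{eq:tPlambda} in the form $K_\lambda(x,s/q)=\frac{x-q^\lambda s}{x-s}K_\lambda(x,s)$. The ratio tends to $1$ as $s\to0$, so $K_\lambda(x,q^n\xi)$ is bounded for $n\to+\infty$. (Equivalently, one can check this directly from $K^{(1)},K^{(2)}$.) The factor $q^n\xi/(q^n\xi-b_i)$ is bounded, and for $i=0$ it equals $1$. So the terms are $O(\rho^n)$, which gives convergence. The limit $K_\lambda(x,q^L\xi)Y(q^{L+1}\xi)\to0$ follows likewise.

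\emph{Tail $s\to\infty$.} As $s\to\infty$ we have $B(s)\to I_m-\sum_i B_i=:B_\infty$. Write $Y(q^{-n}\xi)=B(q^{-n}\xi)^{-1}\cdots B(q^{-1}\xi)^{-1}Y(\xi)$. Every eigenvalue of $B_\infty$ has modulus $>|q^\lambda|$, so $B_\infty$ is invertible and the spectral radius of $B_\infty^{-1}$ is $<|q|^{-\lambda}$ in modulus sense. Pick $\rho'$ with that spectral radius $<\rho'<|q^{-\lambda}|$ and an adapted norm. This gives $\|Y(q^{-n}\xi)\|=O(\rho'^{\,n})$. For the kernel, the recursion gives $K_\lambda(x,q^{-n}\xi)$ as a product of ratios $\frac{x-s}{x-q^\lambda s}$ evaluated along $s=q^{-k}\xi$. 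Each ratio tends to $q^{-\lambda}$, so $|K_\lambda(x,q^{-n}\xi)|=O(\sigma^{n})$ for any $\sigma>|q^{-\lambda}|$ close to it. The measure factor $q^{-n}\xi/(q^{-n}\xi-b_i)$ is bounded; for $i\ge1$ it tends to $1$, and in all cases the $q^{n}$ weight is absorbed into $s/(s-b_i)$. The terms are therefore $O((\rho'\sigma|q^{\lambda}|)^n\cdot\ldots)$, and here the exponents must be balanced carefully. I will normalize by writing $K_\lambda(x,s)=s^{-\lambda}\cdot(\text{bounded})$ as in $K^{(2)}_\lambda$, so that the term behaves like $|q^{\lambda}|^{n}\rho'^{\,n}$ up to subexponential factors. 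Choosing $\rho'<|q^{-\lambda}|$ strictly makes this geometric with ratio $<1$. The same estimate gives the first limit in \eqref{eq:tPKLlim0}.

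With convergence and both limits established, Theorem \ref{thm:qcint} applies directly and yields the stated equation for $\widetilde{Y}$. If $\xi=Ax$, the estimates are uniform in the same way, since $x$ is fixed. The main obstacle is the $s\to\infty$ tail. The exact growth of $K_\lambda$ along $q^{-n}\xi$ must be matched against the decay bought by the eigenvalue condition with threshold $|q^\lambda|$. I would handle it by working with the explicit $K^{(2)}_\lambda$ normalization, where $(x/s;q)_\infty/(q^{-\lambda}x/s;q)_\infty\to1$, so that the growth is exactly $|s^{-\lambda}|$. For a general $K_\lambda$ satisfying \eqref{eq:tPlambda}, I would argue it differs from this by a $q$-periodic factor in $s$, which is bounded on the lattice $q^{\mathbb{Z}}\xi$.
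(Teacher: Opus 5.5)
Your overall strategy is the right one. You bound the two tails of the bilateral Jackson sum geometrically, using the spectral condition on $I_m-B_0$ as $s=q^n\xi\to0$ and on $I_m-\sum_iB_i$ as $s\to\infty$. You then check the limits \eqref{eq:tPKLlim0} and invoke Theorem~\ref{thm:qcint}. The paper gives no argument of its own here and only cites \cite{AT2}. Your $s\to0$ tail is essentially correct, but the kernel estimate in the $s\to\infty$ tail is wrong as written.

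You iterated \eqref{eq:tPlambda} in the wrong direction. Moving outward from $s$ to $s/q$ multiplies $K_\lambda$ by $\frac{x-q^{\lambda}s}{x-s}\to q^{\lambda}$, not by $\frac{x-s}{x-q^{\lambda}s}\to q^{-\lambda}$. Explicitly, for $n\ge n_0$ with $n_0$ large,
\[
K_\lambda(x,q^{-n}\xi)=K_\lambda(x,q^{-n_0}\xi)\,q^{\lambda(n-n_0)}\prod_{k=n_0}^{n-1}\frac{1-q^{k-\lambda}x/\xi}{1-q^{k}x/\xi}.
\]
The product converges, so $|K_\lambda(x,q^{-n}\xi)|\le C|q^{\lambda}|^{n}$. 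This agrees with $K^{(2)}_\lambda\sim s^{-\lambda}$ and also holds for $K^{(1)}_\lambda$.

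Your bound $O(\sigma^{n})$ with $\sigma>|q^{-\lambda}|$ is false in general when $|q^{\lambda}|>1$. The extra factor $|q^{\lambda}|$ you then insert has no source, since the Jackson weight only contributes $q^{-n}\xi/(q^{-n}\xi-b_i)\to1$. With these two compensating slips, the ratio $\rho'\sigma|q^{\lambda}|$ is essentially $\rho'$, which need not be $<1$, so the tail would not close.

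Your final sentence, giving terms of size $(|q^{\lambda}|\rho')^{n}$ with $|q^{\lambda}|\rho'<1$, is the correct estimate. Derive it from the display above. That formula works for every $K_\lambda$ satisfying \eqref{eq:tPlambda}, so the $q$-periodic comparison with $K^{(2)}_\lambda$ becomes unnecessary. The first limit in \eqref{eq:tPKLlim0} follows from the same bound.

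Two smaller repairs are needed. First, start the backward recursion for $Y$ at a large $n_0$ rather than at $\xi$. Only the limit $I_m-\sum_iB_i$ is known to be invertible, and $B(q^{-k}\xi)$ may be singular for finitely many small $k$. Second, near $s=0$, $K_\lambda(x,q^{n}\xi)$ is bounded because the ratios $\frac{1-q^{n+1}\xi/x}{1-q^{\lambda+n+1}\xi/x}$ differ from $1$ by $O(|q|^{n})$, which is summable. Merely tending to $1$ would not be enough. With these changes the argument is complete.
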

See \cite{AT2} for the proof of this theorem. 

The definition of the $q$-middle convolution based on the $q$-convolution is defined in essentially the same way as that of Sakai and Yamaguchi \cite{SY}, which is a $q$-analogue of the one by Dettweiler and Reiter \cite{DR1}.
\begin{df}[$q$-middle convolution]\label{def:qmc}
Define the subspaces $\mathcal{K}$ and $\mathcal{L} $ of $(\mathbb{C}^m )^{N+1}$ as follows:
\begin{equation}
    \mathcal{K} = 
    \left( \hspace{-.5em}
        \begin{array}{c}
            \mathrm{ker}B_0 \\
            \vdots \\
            \mathrm{ker}B_N
        \end{array}
    \hspace{-.5em} \right),  \quad
    \mathcal{L} = \mathrm{ker}(G_0 +G_1 + \dots + G_N )  .
\label{eq:qKL}
\end{equation}
Then, the spaces $\mathcal{K}$ and $\mathcal{L} $ are invariant under the action of $G_k$ for $k=0 ,1, \dots ,N $.
Let $\overline{G}_k$ $(k=0 ,1, \dots ,N )$ denote the matrix induced from the action of $G_k$ on the quotient space $(\mathbb{C}^m )^{N+1}/(\mathcal{K} + \mathcal{L})$ .
The $q$-middle convolution $mc ^q _\lambda$ is defined by the correspondence $ ( B_{0}, B_{1} ,\dots , B_N ) \mapsto ( \overline{G}_{0}, \overline{G}_{1} ,\dots ,\overline{G}_N ) $.
\end{df}

\section{$q$-hypergeometric equation and generalization} \label{sec:qhypgeomgen}
In this section, we review the constructions of Heine's $q$-hypergeometric equation and the generalized $q$-hypergeometric equation of order $3$ through the reformulated $q$-middle convolution presented in \cite{AT2}.
This review serves as a starting point for the construction of the generalized $q$-hypergeometric equation of order $n$ in the next section.

\subsection{Heine's $q$-hypergeometric equation} \label{sec:qconvex1}$ $

Let $\mu_0 \in \mathbb{C}$, $\alpha, \beta  \in \mathbb{C} \setminus \{0\}$ with $\alpha \neq \beta$, and set $y(x)=x^{\mu_0}(\alpha x;q)_{\infty}/(\beta x;q)_{\infty}$.
Then, $y(x)$ satisfies the first-order linear $q$-difference equation
\begin{equation} \label{eq:qeab}
\frac{y(qx) - y(x)}{-x} = \bigg[ \frac{B_0^{[1]}}{x} + \frac{B_1^{[1]}}{x-1/\alpha} \bigg] y(x) ,\ B_0^{[1]} = 1 - q^{\mu_0}, \ B_1^{[1]} = q^{\mu_0} \left(1 - \frac{\beta}{\alpha} \right) .
\end{equation}
The function $y(x)= x^{\widetilde{\mu }} (q/( \beta x) ;q)_{\infty}/(q/(\alpha x) ;q)_{\infty} $ also satisfies equation (\ref{eq:qeab}) with the condition $q^{\widetilde{\mu }} \alpha /\beta = q^{\mu_0} $ .
We apply $q$-convolution $c^q_{\lambda_0}$ to the pair of $1 \times 1$ matrices $(B_0^{[1]}, B_1^{[1]})$, and set $c^q_{\lambda_0}(B_0^{[1]}, B_1^{[1]}) = (G_0^{[2]}, G_1^{[2]})$.
The matrices $G_0^{[2]}$ and $G_1^{[2]}$ are given by 
\begin{align}
& G_0^{[2]}  = \begin{pmatrix}
                          q^{-\lambda_0}B_0^{[1]} + 1 -q^{-\lambda_0} & q^{-\lambda_0}B_1^{[1]} \\
                          0                                           & 0
                         \end{pmatrix}
                       = \begin{pmatrix}
                          1 - q^{\mu_0 - \lambda_0} & q^{\mu_0 - \lambda_0}(1 - \beta/\alpha) \\
                          0                                 & 0
	                         \end{pmatrix},  \label{eq:G0G1qhyp0} \\
& G_1^{[2]} = \begin{pmatrix}
                          0                        & 0 \\
                          q^{-\lambda_0}B_0^{[1]}  & q^{-\lambda_0}B_1^{[1]} + 1 -q^{-\lambda_0}
                         \end{pmatrix}
                       = \begin{pmatrix}
                          0                & 0 \\
                          q^{-\lambda_0} - q^{\mu_0 - \lambda_0}  & q^{\mu_0 - \lambda_0}(1 - \beta/\alpha) + 1 -q^{-\lambda_0}
                         \end{pmatrix}. \nonumber
\end{align}
Note that $\det ( G_0^{[2]} + G_1^{[2]} )= ( 1 -q^{-\lambda_0})(1-  q^{\mu_0 - \lambda_0} \beta/\alpha )$, and the $q$-middle convolution coincides with the $q$-convolution (i.e.~$ \mathcal{K} + \mathcal{L} =\{ 0 \}$), if $\lambda_0 \neq 0 \neq \mu_0 $, $\alpha \neq \beta $ and $q^{\mu_0 - \lambda_0} \beta/\alpha  \neq 1$.
The corresponding $q$-difference equation can be written as 
\begin{align}  \label{eq:qhg-sta} 
\frac{\widetilde{Y}(qx)-\widetilde{Y}(x)}{-x} = \Big[ \frac{G_0^{[2]}}{x} +\frac{G_1^{[2]}}{x-1/\alpha } \Big] \widetilde{Y}(x) , 
\quad \widetilde{Y}(x)=\begin{pmatrix}
                 \widetilde{y}_0(x)\\
                 \widetilde{y}_1(x)
                \end{pmatrix} .
\end{align}
Then, the function $\widetilde{y}_0(x)$ satisfies the single second-order equation
\begin{equation} \label{eq:ytil}
(q^{-\lambda_0}\beta x-q)\widetilde{y}_0(x/q) + q^{\lambda_0 - \mu_0}(\alpha x-q )\widetilde{y}_0(qx) -\{ (q^{-\mu_0}\alpha+\beta)x- q - q^{\lambda_0 - \mu_0 + 1}\} \widetilde{y}_0(x)=0,
\end{equation}
and equation (\ref{eq:ytil}) corresponds to Heine's $q$-hypergeometric equation (\ref{eq:Heine}) by setting the parameters appropriately.

By the $q$-integral transformation associated with the $q$-convolution, the $q$-integral representation of solutions to equation (\ref{eq:qhg-sta}) is obtained as
\begin{equation} \label{sol:Ytil-sta}
\widetilde{Y}(x)
= \begin{pmatrix}
   \widetilde{y}_0(x) \\
   \widetilde{y}_1(x)
  \end{pmatrix}
= 
\begin{pmatrix}
  \displaystyle \int_0^{\xi_0 \infty} \frac{K_{\lambda_0}(x,s_0)}{s_0} y(s_0) d_q s_0 \\
  \displaystyle \int_0^{\xi_0 \infty} \frac{K_{\lambda_0}(x,s_0)}{s_0 - 1/\alpha} y(s_0) d_q s_0
\end{pmatrix} ,
\end{equation}
if the assumption of Theorem \ref{thm:qcint} holds.
We investigate a sufficient condition that the $q$-integral representation converges and satisfies equation (\ref{eq:qhg-sta}) by applying Theorem \ref{thm:suffcondconv}.
In this case, the matrices $B_0^{[1]}$ and $B_1^{[1]}$ are scalars in equation (\ref{eq:qeab}), and we have 
\begin{align}
& I_1 - B_0^{[1]} =  q^{\mu_0}, \label{eq:I1B0}\\
& I_1 - B_0^{[1]} - B_1^{[1]} =  q^{\mu_0} \beta /\alpha . \label{eq:I1B0B1}
\end{align}
Therefore, under the condition that the functions $K_{\lambda_0}(x,s_0)y(s_0)/ s_0  $ and $K_{\lambda_0}(x, s_0) \\ y(s_0) /(s_0 - 1/\alpha ) $ are well-defined about $s_0 = q^{m_0} \xi_0 $ for any $m_0 \in \Zint $, a sufficient condition that the function in (\ref{sol:Ytil-sta}) converges and satisfies equation (\ref{eq:qhg-sta}) is written as $|q^{\mu_0 } |<1$ and $ |q^{ \mu_0} \beta / \alpha | >  |q^{\lambda_0 } | $.

In the case $K_{\lambda_0}(x, s_0) = K^{(1)}_{\lambda_0}(x, s_0) $ (see equation (\ref{eq:tP1tP2})) and $y(s_0)=s^{\mu_0}(\alpha s_0;q)_{\infty}/ \\ (\beta s_0;q)_{\infty} $, 
the function $\widetilde{y}_0(x)$ in equation (\ref{sol:Ytil-sta}) is written as   
\begin{equation} \label{eq:y0P1}
\widetilde{y}_0(x) = (1 - q) x^{-\lambda_0} \sum_{m_0 = -\infty}^{\infty} (q^{m_0} \xi_0)^{\mu_0} \frac{(q^{\lambda_0 + m_0 + 1} \xi_0/x, q^{m_0} \xi_0 \alpha ;q)_{\infty}}{(q^{m_0 + 1} \xi_0/x, q^{m_0} \xi_0 \beta ;q)_{\infty}} .
\end{equation}
By specializing the parameter $\xi_0$, it can further be expressed as a unilateral $q$-hypergeometric series ${_2 \phi_1}$. 
We set $\xi_0 = 1/\alpha $ in equation (\ref{eq:y0P1}).
As shown in \cite{AT}, it follows from $(q^{\ell};q)_{\infty} = 0$ for $\ell \in \mathbb{Z}_{< 0}$ that
\begin{align}
\widetilde{y}_0(x) |_{\xi_0 = 1/\alpha } 
= (1-q)\frac{q^{\mu_0}}{\alpha^{\mu_0}}\frac{(q^{\lambda_0 + 2}/(\alpha x),q;q)_{\infty}}{(q^2/(\alpha x),q\beta/\alpha;q)_{\infty}} x^{-\lambda_0 } {}_2\phi_1 \left( \begin{array}{c} q^2/(\alpha x), q\beta/\alpha\\ q^{\lambda_0 + 2}/(\alpha x) \end{array} ;q,q^{\mu_0} \right) .
\end{align}
By applying Heine's transformation formula in \cite[(1.4.1)]{GR}
\[
{}_2\phi_1 (a,b;c;q,z)= \frac{(b,az;q)_{\infty}}{(c,z;q)_{\infty}} {}_2\phi_1 (c/b,z;az; q,b), \quad |z|<1, |b|<1,
\]
we can rewrite it as
\begin{align} \label{eq:y0al} 
\widetilde{y}_0(x) |_{\xi_0 = 1/\alpha } 
=  (1-q) q^{\mu_0} \alpha^{-\mu_0} x^{-\lambda_0} \frac{(q, q^{\mu_0 + 1}\beta/\alpha ;q)_{\infty}}{(q\beta/\alpha,q^{\mu_0} ;q)_{\infty}}
{}_2\phi_1 \left( \begin{array}{c} q^{\lambda_0},q^{\mu_0}\\
q^{\mu_0 + 1}\beta/\alpha \end{array} ;q, q^2/(\alpha x) \right),
\end{align}
which was obtained in version 2 of \cite{Ar}.
We also have
\begin{align}
 \widetilde{y}_0(x) |_{\xi_0 = q^{-\lambda_0}x } & = (1-q)q^{-\lambda_0\mu_0}x^{\mu_0 -\lambda_0 } \frac{(q^{-\lambda_0}\alpha x,q;q)_{\infty}}{(q^{-\lambda_0}\beta x,q^{1-\lambda_0};q)_{\infty}} {}_2\phi_1 \left( \begin{array}{c} q^{-\lambda_0}\beta x,q^{1-\lambda_0}\\ q^{-\lambda_0}\alpha x \end{array} ;q,q^{\mu_0} \right) \label{eq:y0la} \\
& = (1-q) q^{-\lambda_0\mu_0} x^{\mu_0 - \lambda_0} \frac{(q,q^{-\lambda_0 + \mu_0 + 1};q)_{\infty}}{(q^{-\lambda_0 + 1},q^{\mu_0};q)_{\infty}}
{}_2\phi_1 \left( \begin{array}{c} \alpha/\beta, q^{\mu_0}\\
q^{-\lambda_0 + \mu_0 + 1} \end{array} ;q,q^{-\lambda_0}\beta x \right) . \nonumber
\end{align}
The functions in equations (\ref{eq:y0al}) and (\ref{eq:y0la}) satisfy the $q$-hypergeometric equation in (\ref{eq:ytil}).

In the case $K_{\lambda_0}(x, s_0) = K^{(2)}_{\lambda_0}(x, s_0) $ and $y(s_0)= s_0^{\widetilde{\mu}} (q/( \beta s_0) ;q)_{\infty}/(q/(\alpha s_0) ;q)_{\infty}  $, 
the function $\widetilde{y}_0(x)$ is written as   
\begin{equation}  \label{eq:y0P2}
\widetilde{y}_0(x) = (1-q) \sum_{m_0 = -\infty}^{\infty}(q^{m_0}\xi_0)^{\widetilde{\mu } -\lambda_0 }  \frac{(x q^{-m_0} / \xi_0 , q^{1 - m_0} / (\beta \xi_0 ) ; q)_{\infty}}{(x q^{-\lambda_0 - m_0} / \xi_0 , q^{1 - m_0} / (\alpha \xi_0 ) ; q)_{\infty}} .
\end{equation}
We set $\xi_0 = 1/\beta $ or $\xi_0 = x $ in equation (\ref{eq:y0P2}).
As established in version 2 of \cite{Ar} and \cite{AT}, 
\begin{align}
& \widetilde{y}_0(x) |_{\xi_0 = 1/\beta } 
 = (1-q) \beta^{\lambda_0 - \widetilde{\mu }} \frac{(\beta x, q ;q)_{\infty}}{(q^{-\lambda_0}\beta x, q\beta/\alpha ;q)_{\infty}}
{}_2\phi_1 \left( \begin{array}{c} q^{-\lambda_0}\beta x,q\beta/\alpha\\
\beta x \end{array} ;q,q^{\lambda_0 - \mu_0}\frac{\alpha}{\beta} \right) \label{eq:y0be} \\
& \qquad \qquad  = (1-q) \beta^{\lambda_0 - \widetilde{\mu }} \frac{(q, q^{\lambda_0 - \mu_0 + 1} ;q)_{\infty}}{(q\beta/\alpha, q^{\lambda_0 -\mu_0}\alpha/\beta ;q)_{\infty}}
{}_2\phi_1 \left( \begin{array}{c} q^{\lambda_0},q^{\lambda_0 - \mu_0}\alpha/\beta\\
 q^{\lambda_0 - \mu_0 + 1} \end{array} ;q,q^{-\lambda_0}\beta x \right), \nonumber \\
& \widetilde{y}_0(x) |_{\xi_0 = x } 
 = (1-q) q^{\lambda_0 - \mu_0} \frac{\alpha}{\beta} x^{\widetilde{\mu } - \lambda_0}\frac{(q^2/(\beta x), q ;q)_{\infty}}{(q^2/(\alpha x),q^{-\lambda_0 + 1} ;q)_{\infty}} \nonumber \\
&\hspace{200pt} \cdot {}_2\phi_1 \left( \begin{array}{c} q^{-\lambda_0 + 1},q^2/(\alpha x)\\
q^2/(\beta x) \end{array} ;q,q^{\lambda_0 - \mu_0}\frac{\alpha}{\beta} \right) \nonumber \\
& \qquad \quad = (1-q) q^{\lambda_0 - \mu_0} \frac{\alpha}{\beta} x^{\widetilde{\mu } - \lambda_0} \frac{(q, q^{-\mu_0 + 1}\alpha/\beta ;q)_{\infty}}{(q^{-\lambda_0 + 1}, q^{\lambda_0 - \mu_0}\alpha/\beta ;q)_{\infty}} \nonumber \\
&\hspace{200pt} \cdot {}_2\phi_1 \left( \begin{array}{c} \alpha/\beta, q^{\lambda_0 - \mu_0}\alpha/\beta\\
q^{-\mu_0 + 1}\alpha/\beta \end{array} ;q, q^2/(\alpha x) \right) \nonumber .
\end{align}
The functions in equation (\ref{eq:y0be}) satisfy the $q$-hypergeometric equation in (\ref{eq:ytil}).

\subsection{Generalized $q$-hypergeometric equation of order $3$} \label{sec:qmcex} $ $

To obtain the generalized $q$-hypergeometric equation of order $3$, we use an addition and a $q$-middle convolution.
We apply the addition $add _{\mu_1 } $ to the pair $(G_0^{[2]}, G_1^{[2]}) $ in equation (\ref{eq:G0G1qhyp0}).
Then, we have the pair of $2 \times 2$ matrices $(B^{[2]}_0 , B^{[2]}_1  )$, where
\begin{align}
& B^{[2]}_0   = \begin{pmatrix}
   1 -q^{\mu_0 + \mu_1 - \lambda_0 }  &  q^{\mu_0 + \mu_1 - \lambda_0 }(1 - \beta/\alpha) \\
       0          & 1 - q^{\mu_1 }   \\
      \end{pmatrix}, \label{eq:B'0B'1qhyp0}\\
& B^{[2]}_1  = \begin{pmatrix}
       0          &  0\\
      q^{\mu_1  - \lambda_0 } ( 1 -q^{\mu_0 } )  &  q^{\mu_1 } ( q^{\mu_0 -\lambda_0 }(1 - \beta/\alpha) + 1 - q^{-\lambda_0} ) 
\end{pmatrix} . \nonumber
\end{align}
The mapping $(G_0^{[2]}, G_1^{[2]}) \mapsto (B^{[2]}_0 , B^{[2]}_1 ) $ corresponds to a gauge-transformation of the $q$-difference equations.
Let $\widetilde{Y}(x)$ be a solution to equation (\ref{eq:qhg-sta}), and set
\begin{equation} \label{eq:Ygxxmu'}
  V^{[2]}(x) = x^{\mu_1}\widetilde{Y}(x).
\end{equation}
The function $ V^{[2]}(x) $ satisfies
\begin{align} \label{eq:B'0B'1m}
&\frac{V^{[2]}(qx) - V^{[2]}(x)}{-x} = \bigg[ \frac{B^{[2]}_0}{x} + \frac{B^{[2]}_1}{x-1/\alpha} \bigg] V^{[2]}(x).
\end{align}
It follows from equations (\ref{eq:Ygxxmu'}) and (\ref{sol:Ytil-sta}) that the function
 \begin{align}
V^{[2]}(x) &= 
\begin{pmatrix}
  \displaystyle  x^{\mu_1} \int_0^{\xi_0 \infty} \frac{K_{\lambda_0}(x,s_0)}{s_0} y(s_0) d_q s_0 \\
  \displaystyle  x^{\mu_1} \int_0^{\xi_0 \infty} \frac{K_{\lambda_0}(x,s_0)}{s_0 - 1/\alpha} y(s_0) d_q s_0
\end{pmatrix}  
\end{align}
satisfies equation (\ref{eq:B'0B'1m}), if the function $y(x)$ is a solution to equation (\ref{eq:qeab}), $|q^{\mu_0}| < 1$, $ |q^{\mu_0} \beta / \alpha | > |q^{\lambda_0}| $ and the functions $K_{\lambda_0}(x,s_0)y(s_0) / s_0  $ and $K_{\lambda_0}(x,s_0)y(s_0) /(s_0 - 1/\alpha ) $ are well-defined about $s_0 = q^{m_0} \xi_0 $ for any $m_0 \in \Zint $.

We apply the $q$-convolution $c^q_{\lambda_1}$ to $(B^{[2]}_0, B^{[2]}_1)$.
We obtain $c^q_{\lambda_1} (B^{[2]}_0, B^{[2]}_1)= (G^{[4]}_0, G^{[4]}_1)$, where $G^{[4]}_0$ and $G^{[4]}_1$ are the $4 \times 4$ matrices described as
\begin{align}
& G^{[4]}_0
= \begin{pmatrix}
     q^{-\lambda_1}B^{[2]}_0 + (1 - q^{-\lambda_1})I_2 & q^{-\lambda_1}B^{[2]}_1 \\
     O                                          & O
\end{pmatrix}, \\ 
& G^{[4]}_1
= \begin{pmatrix}
     O      & O    \\
     q^{-\lambda_1}B^{[2]}_0   & q^{-\lambda_1}B^{[2]}_1 + (1-q^{-\lambda_1}) I_2
\end{pmatrix}. \nonumber
\end{align}
We investigate a sufficient condition for the parameters that the $q$-difference equation 
\begin{align} \label{eq:G'0G'1m}
&\frac{\widetilde{V}^{[4]}(qx) - \widetilde{V}^{[4]}(x)}{-x} = \bigg[ \frac{G^{[4]}_0}{x} + \frac{G^{[4]}_1}{x-1/\alpha} \bigg] \widetilde{V}^{[4]}(x)
\end{align}
admits a $q$-integral representation of solutions by applying Theorem \ref{thm:suffcondconv} for $c^q_{\lambda_1} (B^{[2]}_0, B^{[2]}_1)\\= (G^{[4]}_0, G^{[4]}_1)$.
The eigenvalues of $I_2 - B^{[2]}_0 $ are $ q^{\mu_1}$, $ q^{\mu_0 + \mu_1 - \lambda_0} $, and the eigenvalues of $I_2 - B^{[2]}_0 - B^{[2]}_1 $ are $ q^{\mu_1 - \lambda_0 }$, $ q^{\mu_0 + \mu_1 - \lambda_0} \beta/ \alpha $.
The formal $q$-integral representation of solutions to the equation obtained after applying $q$-convolution $c_{\lambda_1}^q$ is given by
\begin{align} \label{eq:tYg}
\widetilde{V}^{[4]}(x) 
& = \begin{pmatrix}
  \displaystyle \int_0^{\xi_1 \infty} \frac{K_{\lambda_1}(x,s_1)}{s_1} V^{[2]} (s_1) d_q s_1 \\
  \displaystyle \int_0^{\xi_1 \infty} \frac{K_{\lambda_1}(x,s_1)}{s_1 - 1/\alpha} V^{[2]} (s_1) d_q s_1 
  \end{pmatrix} \\
& = \begin{pmatrix}
  \displaystyle \int_0^{\xi_1 \infty} \frac{K_{\lambda_1}(x,s_1)}{s_1} s_1^{\mu_1 } \int_0^{\xi_0 \infty} \frac{K_{\lambda_0}(s_1,s_0)}{s_0} y(s_0) d_q s_0 d_q s_1 \\
  \displaystyle \int_0^{\xi_1 \infty} \frac{K_{\lambda_1}(x,s_1)}{s_1} s_1^{\mu_1 } \int_0^{\xi_0 \infty} \frac{K_{\lambda_0}(s_1,s_0)}{s_0 - 1/\alpha} y(s_0) d_q s_0 d_q s_1 \\
   \displaystyle \int_0^{\xi_1 \infty} \frac{K_{\lambda_1}(x,s_1)}{s_1 - 1/\alpha} s_1^{\mu_1 } \int_0^{\xi_0 \infty} \frac{K_{\lambda_0}(s_1,s_0)}{s_0} y(s_0) d_q s_0 d_q s_1 \\
  \displaystyle \int_0^{\xi_1 \infty} \frac{K_{\lambda_1}(x,s_1)}{s_1 - 1/\alpha} s_1^{\mu_1 } \int_0^{\xi_0 \infty} \frac{K_{\lambda_0}(s_1,s_0)}{s_0 - 1/\alpha} y(s_0) d_q s_0 d_q s_1
  \end{pmatrix} , \nonumber
\end{align}
where $K_{\lambda_1 }(x,s_1) $ and $K_{\lambda_0 }(s_1,s_0)$ are functions which satisfy equation (\ref{eq:tPlambda}), and $y(s_0)$ is a function which satisfies equation (\ref{eq:qeab}).
Then, we obtain the following proposition by Theorem \ref{thm:suffcondconv}.
\begin{prop} \label{prop:ghge3conv}
If the parameters satisfy
\begin{align} \label{eq:prop41}
& |q^{\mu_0 } |<1,\ |q^{\mu_0} \beta/\alpha| > |q^{\lambda_0 } |, \\
& |q^{\mu_1 } |<1, \  |q^{\mu_0 + \mu_1 - \lambda_0 } |<1, \   |q^{ \mu_1 - \lambda_0 } | >  |q^{\lambda_1 } | ,  \ |q^{\mu_0 + \mu_1 - \lambda_0} \beta/\alpha| > |q^{\lambda_1 } | \nonumber
\end{align}
and the integrands in equation (\ref{eq:tYg}) are well-defined about $(s_1,s_0) = (q^{m_1} \xi_1, q^{m_0} \xi_0  )$ for any $m_1, m _0 \in \Zint $, then the function $\widetilde{V}^{[4]}(x)$ in equation (\ref{eq:tYg}) converges and satisfies equation (\ref{eq:G'0G'1m}).
\end{prop}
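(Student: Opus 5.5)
The plan is to apply Theorem \ref{thm:suffcondconv} twice: first for the inner integral, then for the outer one, with the addition in between.

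\emph{Inner level.} Take the $1\times 1$ system (\ref{eq:qeab}) with $(B_0^{[1]},B_1^{[1]})$ and parameter $\lambda_0$. By (\ref{eq:I1B0}) and (\ref{eq:I1B0B1}), the only eigenvalue of $I_1-B_0^{[1]}$ is $q^{\mu_0}$, and the only eigenvalue of $I_1-B_0^{[1]}-B_1^{[1]}$ is $q^{\mu_0}\beta/\alpha$. The first line of (\ref{eq:prop41}) is therefore exactly the eigenvalue hypothesis of Theorem \ref{thm:suffcondconv}. The integrands $K_{\lambda_0}(s_1,s_0)y(s_0)/s_0$ and $K_{\lambda_0}(s_1,s_0)y(s_0)/(s_0-1/\alpha)$ are assumed well-defined at $s_0=q^{m_0}\xi_0$. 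Hence, for each fixed value $s_1=q^{m_1}\xi_1$, the inner integral $\widetilde{Y}(s_1)$ converges and satisfies (\ref{eq:qhg-sta}) in the variable $s_1$. One point needs care here: $s_1$ runs over the lattice $q^{\mathbb Z}\xi_1$, and $\xi_0$ may be proportional to the variable, as in Theorem \ref{thm:qcint}. The theorem is applied pointwise in $s_1$, so this causes no problem. Multiplying by $s_1^{\mu_1}$ gives $V^{[2]}(s_1)=s_1^{\mu_1}\widetilde{Y}(s_1)$, and by the addition computation this satisfies (\ref{eq:B'0B'1m}) with coefficients $(B_0^{[2]},B_1^{[2]})$.

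\emph{Outer level.} Now apply Theorem \ref{thm:suffcondconv} to $V^{[2]}$, the pair $(B_0^{[2]},B_1^{[2]})$, and parameter $\lambda_1$. The required spectra were already recorded above.
\begin{itemize}
\item The matrix $I_2-B_0^{[2]}$ is upper triangular with eigenvalues $q^{\mu_1}$ and $q^{\mu_0+\mu_1-\lambda_0}$.
\item The matrix $I_2-B_0^{[2]}-B_1^{[2]}$ has eigenvalues $q^{\mu_1-\lambda_0}$ and $q^{\mu_0+\mu_1-\lambda_0}\beta/\alpha$. To confirm this, compute its trace and determinant from (\ref{eq:B'0B'1qhyp0}) and check that they are the sum and product of these two numbers; this is a routine $2\times 2$ computation.
\end{itemize}
The second line of (\ref{eq:prop41}) says precisely that both eigenvalues of the first matrix have modulus $<1$ and both eigenvalues of the second have modulus $>|q^{\lambda_1}|$. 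The outer integrands are the entries of (\ref{eq:tYg}) and are assumed well-defined at $(q^{m_1}\xi_1,q^{m_0}\xi_0)$. Theorem \ref{thm:suffcondconv} then yields that $\widetilde V^{[4]}(x)$ converges and satisfies (\ref{eq:G'0G'1m}), since $(G_0^{[4]},G_1^{[4]})=c^q_{\lambda_1}(B_0^{[2]},B_1^{[2]})$.

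The main obstacle is making sure the hypotheses of Theorem \ref{thm:suffcondconv} at the outer level really hold. That theorem is stated for "a solution $Y$", and the limit conditions (\ref{eq:tPKLlim0}) are derived in \cite{AT2} from the asymptotic behaviour of solutions near $0$ and $\infty$, which is governed by the eigenvalues. I would check that $V^{[2]}$, built as a convergent Jackson integral, is a genuine solution on the whole lattice $q^{\mathbb Z}\xi_1$. Once that holds, the theorem applies as a black box and the well-definedness assumption on the integrands covers the remaining requirement.
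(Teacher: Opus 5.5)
Your proposal is correct and matches the paper's argument. The first line of \eqref{eq:prop41} supplies the hypotheses of Theorem~\ref{thm:suffcondconv} at the inner level, so $V^{[2]}$ solves \eqref{eq:B'0B'1m} on $q^{\mathbb{Z}}\xi_1$; the second line is exactly the outer eigenvalue hypothesis for $(B^{[2]}_0,B^{[2]}_1)$ with parameter $\lambda_1$. The paper only states the spectrum of $I_2-B^{[2]}_0-B^{[2]}_1$, and your trace/determinant check of it is correct: the trace is $q^{\mu_1-\lambda_0}+q^{\mu_0+\mu_1-\lambda_0}\beta/\alpha$ and the determinant is $q^{\mu_0+2\mu_1-2\lambda_0}\beta/\alpha$.
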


We consider the $q$-middle convolution $mc^q_{\lambda_1}$ to $(B^{[2]}_0, B^{[2]}_1)$.
Recall that the $q$-middle convolution is obtained as the pair of the operators on the quotient space $\C^4/(\mathcal{K} + \mathcal{L}) $.
We assume $q^{\mu_1  - \lambda_0 } ( 1 -q^{\mu_0 }) \neq 0$, or equivalently, $\mu_0 \neq 0$.
The vector ${}^t(q^{\mu_0}(1 - \beta/\alpha) + q^{\lambda_0} - 1, q^{\mu_0} - 1)$ belongs to the space $\ker (B^{[2]}_1)$.
Moreover, if $\det (B^{[2]}_0) = (1 - q^{\mu_0 + \mu_1 - \lambda_0})(1 - q^{\mu_1}) \neq 0$, or equivalently, $\mu_1 \neq 0$ and $\mu_0 + \mu_1 - \lambda_0 \neq 0 $, we obtain $\dim \mathcal{K}=1$.
We have $\dim \mathcal{L} = 0$, if $\det ( G^{[4]}_0 + G^{[4]}_1 ) = (1- q^{-\lambda_1} )^2 (1-q^{\mu_1 - (\lambda_0 + \lambda_1)}) (1-q^{\mu_0 + \mu_1 - (\lambda_0 + \lambda_1)} \beta /\alpha ) \neq 0$, namely $\lambda_1 \neq 0$, $\mu_1 - (\lambda_0 + \lambda_1) \neq 0$ and $q^{\mu_0 + \mu_1 - (\lambda_0 + \lambda_1)} \beta /\alpha \neq 1 $.
Continuing the argument under these conditions, we have $\dim (\C^4/(\mathcal{K} + \mathcal{L}) ) =3 $.
To obtain matrix representations of $mc^q_{\lambda_1} (B^{[2]}_0, B^{[2]}_1) $, we consider a simultaneous similarity transformation for the matrices $G^{[4]}_0 $ and $G^{[4]}_1$.
Set
\begin{equation} \label{mtP-1}
P^{[4]} = \begin{pmatrix}
      1 & 0 & 0 & 0 \\
      0 & 1 & 0 & 0 \\
      0 & 0 & 1 & f_0  \\
      0 & 0 & 0 & q^{\mu_0} - 1
\end{pmatrix}, \; f_0 = q^{\mu_0}(1 - \beta/\alpha) + q^{\lambda_0} - 1 .
\end{equation}
Then, $P^{[4]}$ is invertible and we have
\begin{align} \label{eq:P-1G3P}
& (P^{[4]})^{-1}G^{[4]}_0 P^{[4]} \\
&\quad = \begin{pmatrix}
    1-q^{\mu_0 + \mu_1 - (\lambda_0 + \lambda_1)} & q^{\mu_0 + \mu_1 - (\lambda_0 + \lambda_1)}(1 - \beta/\alpha) & 0        & 0 \\
    0                               & 1-q^{\mu_1 - \lambda_1}          & q^{\mu_1 - (\lambda_0 + \lambda_1)} (1 - q^{\mu_0 })& 0 \\
    0                                & 0                                    & 0                & 0 \\
    0                                & 0                                    & 0                & 0
  \end{pmatrix}, \nonumber \\ 
& (P^{[4]})^{-1}G^{[4]}_1 P^{[4]}
= \begin{pmatrix}
    0   & 0   & 0   & 0 \\
    0   & 0   & 0   & 0 \\
    f_1 & f_2 & f_3 & 0 \\
    0   & \displaystyle -q^{-\lambda_1 }\frac{q^{\mu_1 } - 1}{q^{\mu_0 } -1} & -q^{\mu_1 - (\lambda_0 + \lambda_1) }   & 1-q^{-\lambda_1} 
  \end{pmatrix}, 
\end{align}
where 
\begin{align} \label{eq:f1-3}
& f_1 = q^{-\lambda_1} - q^{\mu_1 + \mu_0 - (\lambda_0 + \lambda_1)}, \\
& f_2 = q^{\mu_0 + \mu_1 - (\lambda_0 + \lambda_1)} (1 - \beta/\alpha) + q^{-\lambda_1} \frac{q^{\mu_1 } - 1}{q^{\mu_0 } - 1 } f_0 , \nonumber \\
& f_3 = 1 - q^{-\lambda_1} + q^{\mu_1 - (\lambda_0 + \lambda_1)} f_0 . \nonumber
\end{align}
The upper-left $3\times 3$ submatrices of $(P^{[4]})^{-1}G^{[4]}_0 P^{[4]}$ and $(P^{[4]})^{-1}G^{[4]}_1 P^{[4]}$ are matrix representations of $G^{[4]}_0$ and $G^{[4]}_1$ on the quotient space $\C^4/(\mathcal{K} + \mathcal{L}) $, respectively.
Thus, we can write $mc^q_{\lambda_1}(B^{[2]}_0, B^{[2]}_1) = (\overline{G}^{[3]}_0, \overline{G}^{[3]}_1)$, where
\begin{align} 
&\overline{G}^{[3]}_0 
= \begin{pmatrix}
    1-q^{\mu_0 + \mu_1 - \lambda_0 - \lambda_1} & q^{\mu_0 + \mu_1 - \lambda_0 - \lambda_1}(1 - \beta/\alpha) & 0 \\
    0                               & 1-q^{\mu_1 - \lambda_1}                & q^{\mu_1 - (\lambda_0 + \lambda_1)} (1 - q^{\mu_0 }) \\
    0                               & 0                                  & 0 
                  \end{pmatrix}, \label{mat:Gb30} \\
&\overline{G}^{[3]}_1
= \begin{pmatrix}
    0 & 0 & 0 \\
    0 & 0 & 0 \\
    f_1 & f_2 & f_3
  \end{pmatrix}.   \label{mat:Gb31} 
\end{align}
The $q$-difference equation obtained after applying $q$-middle convolution $mc^q_{\lambda_1}$ is written as
\begin{align} \label{eq:111-21-111}
\frac{1}{-x} 
\begin{pmatrix}
  g_1^{[3]}(qx) - g_1^{[3]}(x) \\
  g_2^{[3]}(qx) - g_2^{[3]}(x) \\
  g_3^{[3]}(qx) - g_3^{[3]}(x)
\end{pmatrix}
= \bigg[ \frac{\overline{G}^{[3]}_0}{x} + \frac{\overline{G}^{[3]}_1}{x - 1/\alpha} \bigg] 
 \begin{pmatrix}
   g_1^{[3]}(x) \\
   g_2^{[3]}(x) \\
   g_3^{[3]}(x)  
  \end{pmatrix}.
\end{align}
From equation (\ref{eq:111-21-111}), the single third-order $q$-difference equation for $g_1^{[3]}(x)$ is derived as
\begin{align} \label{eq:111-21-111-g1}
&(\alpha x-1) g_1^{[3]}(q^3 x) \\
&\quad - q^{-(\lambda_0 + \lambda_1)}
[ \{ (q^{\mu_1} + q^{\lambda_0})\alpha + q^{\mu_0 + \mu_1}\beta \}x - (q^{\mu_0 + \mu_1} + q^{\mu_1 + \lambda_0} + q^{\lambda_0 + \lambda_1}) ] g_1^{[3]}(q^2 x) \nonumber \\
&\quad +q^{\mu_1 - 2(\lambda_0 + \lambda_1)}
[ \{q^{\lambda_0}\alpha + (q^{\mu_0 + \mu_1} + q^{\mu_0 + \lambda_0})\beta \}x - q^{\lambda_0}(q^{\mu_0 + \mu_1} + q^{\mu_0 + \lambda_1} + q^{\lambda_0 + \lambda_1} ) ] g_1^{[3]}(qx) \nonumber \\
&\quad -q^{\mu_0 + 2\mu_1 - (\lambda_0 + 2\lambda_1)}(q^{-(\lambda_0 + \lambda_1)}\beta x - 1)g_1^{[3]}(x)
=0. \nonumber
\end{align}
Each coefficient of $g_1^{[3]} (q^j x)$ $(j=0,1,2,3)$ is a linear polynomial in $x$.
This is the generalized $q$-hypergeometric equation of order $3$ given in equation \eqref{eq:genqhge3-intro}.
The functions $g_2^{[3]}(x)$ and $g_3^{[3]}(x)$ also satisfy the generalized $q$-hypergeometric equation of order $3$, whose coefficients are of degree $1$ and some terms are different from equation (\ref{eq:111-21-111-g1}).

We investigate the $q$-integral representation of solutions to equation (\ref{eq:111-21-111-g1}).
We assume that the hypotheses of Proposition \ref{prop:ghge3conv} hold.
If $\widetilde{V}^{[4]}(x) $ is a solution to equation (\ref{eq:G'0G'1m}) and we write
\begin{align}
\begin{pmatrix}
  g_1(x) \\
  g_2(x) \\
  g_3(x) \\
  g_4(x) 
\end{pmatrix}
&= (P^{[4]})^{-1} \widetilde{V}^{[4]}(x)
= 
\begin{pmatrix}
     1 & 0 & 0 & 0 \\
     0 & 1 & 0 & 0 \\
     0 & 0 & 1 & -f_0/(q^{\mu_0} - 1) \\
     0 & 0 & 0 & 1/(q^{\mu_0} - 1)
   \end{pmatrix} \widetilde{V}^{[4]}(x) , 
\end{align}
then the function ${}^t(g_1(x), g_2(x), g_3(x) ) $ coincides with ${}^t(g_1^{[3]}(x), g_2^{[3]}(x), g_3^{[3]}(x) ) $ which satisfies equation (\ref{eq:111-21-111}).
In particular, from equation \eqref{eq:tYg}, we have
\begin{equation}
g_1^{[3]}(x) = \int_0^{\xi_1 \infty} \frac{K_{\lambda_1}(x,s_1)}{s_1} s_1^{\mu_1 } \int_0^{\xi_0 \infty} \frac{K_{\lambda_0}(s_1,s_0)}{s_0} y(s_0) d_q s_0 d_q s_1 .
\end{equation}
If we specialize
\begin{equation} 
\label{eq:condPyxi}
K_{\lambda_1}(x,s_1) = K^{(1)}_{\lambda_1}(x, s_1) , \; K_{\lambda_0}(s_1, s_0) = K^{(1)}_{\lambda_0}(s_1, s_0) ,  \; y(s_0)=s_0^{\mu_0}\frac{(\alpha s_0;q)_{\infty}}{(\beta s_0;q)_{\infty}} ,
\end{equation}
then we obtain
\begin{align}
g_1^{[3]}(x) 
&= x^{-\lambda_1} \int_0^{\xi_1 \infty} s_1^{-1} \frac{(q^{\lambda_1 + 1}s_1/x;q)_{\infty}}{(qs_1/x;q)_{\infty}} s_1^{\mu_1 - \lambda_0} \int_0^{\xi_0 \infty}  s_0^{\mu_0 - 1} \frac{(q^{\lambda_0 + 1}s_0/s_1, \alpha s_0 ;q)_{\infty}}{(qs_0/s_1 , \beta s_0 ;q)_{\infty}} d_q s_0 d_q s_1 \label{sol:g1-11121111} \\
&= (1 - q)^2 \xi_0^{\mu_0} \xi_1^{\mu_1 - \lambda_0} x^{-\lambda_1} 
 \notag \\
&\cdot \sum_{m_1 = -\infty}^{\infty} \sum_{m_0 = -\infty}^{\infty} (q^{\mu_1 - \lambda_0})^{m_1}  (q^{\mu_0})^{m_0} \frac{(q^{\lambda_1 + m_1 + 1}\xi_1/x, q^{\lambda_0 + m_0 - m_1 + 1}\xi_0/\xi_1, q^{m_0} \xi_0 \alpha; q)_{\infty}}{(q^{m_1 + 1}\xi_1/x, q^{m_0 - m_1 + 1}\xi_0/\xi_1, q^{m_0}\xi_0 \beta; q)_{\infty}} .   \notag
\end{align}
The function \eqref{sol:g1-11121111} is the solution to equation (\ref{eq:111-21-111-g1}).
By specializing the parameters $\xi_0$ and $\xi_1$ and using the $q$-binomial theorem, the function (\ref{sol:g1-11121111})  can be transformed into the $q$-hypergeometric series ${}_3\phi_2$.
\begin{prop}
Assume that the parameters satisfy equation (\ref{eq:prop41}), and set $\lambda_0 \not \in {\mathbb Z}$, $\lambda_1 \not \in {\mathbb Z}$.
If $|q^{ -(\lambda_0 + \lambda_1 )} \beta x| <1 $ holds, then \eqref{sol:g1-11121111} can be transformed into the following $q$-hypergeometric series ${}_3\phi_2$:
\begin{align}
&g_1^{[3]}(x) | _{\xi_0 = q^{ -(\lambda_0 + \lambda_1) } x ,\; \xi_1 = q ^{-\lambda_1 }x  } \label{eq:g1qlalaxqlax} \\
&\quad = (1 - q)^2 q^{ -\mu_0(\lambda_0 + \lambda_1) - (\mu_1 - \lambda_0) \lambda_1} \;
x^{\mu_0 + \mu_1 - (\lambda_0 + \lambda_1) }
\frac{(q, q, q^{\mu_0 - \lambda_0 + 1} , q^{\mu_0 + \mu_1 - (\lambda_0 + \lambda_1) + 1 } ; q)_{\infty}}{(q^{-\lambda_0 + 1} , q^{-\lambda_1 + 1} , q^{\mu_0 } , q^{\mu_0 + \mu_1 - \lambda_0 } ; q)_{\infty}} \nonumber \\
&\qquad\qquad \cdot  {}_3\phi_2 \Bigl( { \alpha /\beta , q^{\mu_0 }, q^{\mu_0 + \mu_1 - \lambda_0 } \atop{q^{\mu_0 - \lambda_0 + 1} , q^{\mu_0 + \mu_1 - (\lambda_0 + \lambda_1) + 1 }}} ; q, q^{ -(\lambda_0 + \lambda_1) } \beta x \Bigr). \nonumber  
\end{align}
\end{prop}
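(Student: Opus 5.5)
The plan is to evaluate the double Jackson sum in \eqref{sol:g1-11121111} directly at $\xi_0 = q^{-(\lambda_0+\lambda_1)}x$ and $\xi_1 = q^{-\lambda_1}x$. Under this specialization the sum truncates to a unilateral one. Expanding the remaining $\alpha,\beta$-dependent infinite products by the $q$-binomial theorem and resumming the two lattice sums in closed form then produces a ${}_3\phi_2$ series.

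\emph{Truncation.} With these choices the first numerator factor becomes $(q^{\lambda_1+m_1+1}\xi_1/x;q)_\infty=(q^{m_1+1};q)_\infty$, which vanishes for $m_1\le -1$. The second becomes $(q^{\lambda_0+m_0-m_1+1}\xi_0/\xi_1;q)_\infty=(q^{m_0-m_1+1};q)_\infty$, which vanishes for $m_0<m_1$. The corresponding denominators are $(q^{m_1+1-\lambda_1};q)_\infty$ and $(q^{m_0-m_1+1-\lambda_0};q)_\infty$. These never vanish because $\lambda_0,\lambda_1\notin\mathbb Z$; this is exactly where that hypothesis is used. Writing $m_0=m_1+k$ with $m_1,k\ge 0$ and using
\[
\frac{(q^{m+1};q)_\infty}{(q^{m+1-\lambda};q)_\infty}=\frac{(q;q)_\infty}{(q^{1-\lambda};q)_\infty}\,\frac{(q^{1-\lambda};q)_m}{(q;q)_m},
\]
the sum becomes a double sum over $m_1,k\ge 0$. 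Its summand is
\[
\frac{(q^{1-\lambda_1};q)_{m_1}(q^{1-\lambda_0};q)_k}{(q;q)_{m_1}(q;q)_k}\,(q^{\mu_0+\mu_1-\lambda_0})^{m_1}(q^{\mu_0})^{k}\,\frac{(q^{m_1+k}z\alpha/\beta;q)_\infty}{(q^{m_1+k}z;q)_\infty},
\qquad z=q^{-(\lambda_0+\lambda_1)}\beta x .
\]
In the same step I will collect the prefactor $\xi_0^{\mu_0}\xi_1^{\mu_1-\lambda_0}x^{-\lambda_1}$, which gives the power of $q$ times $x^{\mu_0+\mu_1-(\lambda_0+\lambda_1)}$ in \eqref{eq:g1qlalaxqlax}, together with the constants $(q,q;q)_\infty/(q^{1-\lambda_0},q^{1-\lambda_1};q)_\infty$.

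\emph{Expansion and resummation.} Since $|z|<1$ and $|q^{m_1+k}|\le 1$, the $q$-binomial theorem gives
\[
\frac{(q^{m_1+k}z\alpha/\beta;q)_\infty}{(q^{m_1+k}z;q)_\infty}=\sum_{j\ge0}\frac{(\alpha/\beta;q)_j}{(q;q)_j}\,q^{(m_1+k)j}z^j .
\]
After interchanging the order of summation, the coefficient of $z^j$ factorizes into two independent sums. The first is $\sum_k \frac{(q^{1-\lambda_0};q)_k}{(q;q)_k}(q^{\mu_0+j})^k = \frac{(q^{\mu_0-\lambda_0+1+j};q)_\infty}{(q^{\mu_0+j};q)_\infty}$. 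The second is the analogous sum in $m_1$ with argument $q^{\mu_0+\mu_1-\lambda_0+j}$, which equals $\frac{(q^{\mu_0+\mu_1-\lambda_0-\lambda_1+1+j};q)_\infty}{(q^{\mu_0+\mu_1-\lambda_0+j};q)_\infty}$. Both are valid by the $q$-binomial theorem because \eqref{eq:prop41} gives $|q^{\mu_0}|<1$ and $|q^{\mu_0+\mu_1-\lambda_0}|<1$. Writing $(aq^j;q)_\infty=(a;q)_\infty/(a;q)_j$, each ratio splits into a $j$-independent constant times a finite-product factor:
\begin{itemize}[leftmargin=*]
\item the $k$-sum gives $(q^{\mu_0};q)_j/(q^{\mu_0-\lambda_0+1};q)_j$;
\item the $m_1$-sum gives $(q^{\mu_0+\mu_1-\lambda_0};q)_j/(q^{\mu_0+\mu_1-(\lambda_0+\lambda_1)+1};q)_j$.
\end{itemize}
The constants are exactly the remaining infinite products in \eqref{eq:g1qlalaxqlax}, and the $j$-sum is the stated ${}_3\phi_2$.

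\emph{Main obstacle.} The delicate step is justifying the interchange of the triple sum over $(m_1,k,j)$. I would bound it in absolute value by the same triple sum with every parameter replaced by its modulus. In that dominating sum the $k$- and $m_1$-series converge geometrically with ratios $|q^{\mu_0}|$ and $|q^{\mu_0+\mu_1-\lambda_0}|$, and the $j$-series converges because $|z|<1$. So the sum is absolutely convergent and Fubini applies.

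A secondary point is consistency of branches. The convention $(q^{m}\xi)^{\mu}=q^{m\mu}\xi^{\mu}$ is already built into \eqref{sol:g1-11121111}, so the power of $q$ in the prefactor comes out as $q^{-\mu_0(\lambda_0+\lambda_1)-(\mu_1-\lambda_0)\lambda_1}$ with no extra factors. Finally, the hypotheses \eqref{eq:prop41} ensure via Proposition~\ref{prop:ghge3conv} that the specialized function is still a solution of \eqref{eq:111-21-111-g1}, although that is not needed for the identity itself.
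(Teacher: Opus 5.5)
Your proposal is correct and follows the paper's proof essentially step for step. It uses the same specialization and truncation via $(q^{\ell+1};q)_\infty=0$ with the reindexing $m_0=m_1+k$, then the $q$-binomial expansion of the $\alpha/\beta$ ratio, factorization of the coefficient of $z^j$, and two further $q$-binomial summations. Your absolute-convergence justification for the interchange is an addition the paper omits, though the dominating series should use $(-|a|;|q|)_m$ rather than $(|a|;|q|)_m$ in the numerators; this still gives uniformly bounded coefficients, so the argument goes through.
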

\begin{proof}
Set $\xi_1 = q ^{-\lambda_1 }x  $ and $\xi_0  = q^{ -(\lambda_0 + \lambda_1) } x$ in equation (\ref{sol:g1-11121111}).
We change the variables $(m_0,m_1)$ in equation (\ref{sol:g1-11121111}) to the variables $(M_0,M_1)$ by setting $m_1 = M_1$ and $m_0 - m_1 = M_0$. 
It follows from $(q^{\ell +1};q)_{\infty} = 0$ for $\ell \in \mathbb{Z}_{<0}$ that
\begin{align}
& \sum_{m_1 = -\infty}^{\infty} \sum_{m_0 = -\infty}^{\infty} (q^{\mu_1 - \lambda_0})^{m_1}  (q^{\mu_0})^{m_0} \frac{(q^{\lambda_1 + m_1 + 1}\xi_1/x, q^{\lambda_0 + m_0 - m_1 + 1}\xi_0/\xi_1, q^{m_0} \xi_0 \alpha; q)_{\infty}}{(q^{m_1 + 1}\xi_1/x, q^{m_0 - m_1 + 1}\xi_0/\xi_1, q^{m_0}\xi_0 \beta; q)_{\infty}} \label{eq:prop1} \\ 
& =  \sum_{M_1 = 0}^{\infty} \sum_{M_0 = 0}^{\infty} (q^{\mu_1 - \lambda_0})^{M_1}  (q^{\mu_0})^{M_0 + M_1} \frac{(q^{M_1 + 1}, q^{M_0 + 1} , q^{ -(\lambda_0 + \lambda_1) + M_0 + M_1} \alpha x ; q)_{\infty}}{(q^{-\lambda_1 + M_1 + 1} , q^{- \lambda_0 + M_0 + 1}, q^{ -(\lambda_0 + \lambda_1) + M_0 + M_1} \beta x ; q)_{\infty}} . \nonumber
\end{align}
Note that the summand in equation (\ref{eq:prop1}) is well-defined, if $\lambda_0 \not \in {\mathbb Z}$ and $\lambda_1 \not \in {\mathbb Z}$.
By the $q$-binomial theorem, we have
\begin{equation}
\frac{(q^{ -(\lambda_0 + \lambda_1) + M_0 + M_1} \alpha x; q)_{\infty}}{(q^{ -(\lambda_0 + \lambda_1) + M_0 + M_1} \beta x; q)_{\infty}} 
= \sum_{N=0}^{\infty} \frac{(\alpha /\beta ; q)_N}{(q; q)_N} (q^{ -(\lambda_0 + \lambda_1) + M_0 + M_1} \beta x)^N ,
\end{equation}
if $|q^{ -(\lambda_0 + \lambda_1) } \beta x | <1$. 
Hence, equation (\ref{eq:prop1}) is equal to 
\begin{align}
&  \sum_{M_1 = 0}^{\infty} \sum_{M_0 = 0}^{\infty} \sum_{N=0}^{\infty} (q^{\mu_0 + \mu_1  - \lambda_0 + N })^{M_1}  (q^{\mu_0 + N} )^ {M_0} \frac{(q^{M_1 + 1}, q^{M_0 + 1} ; q)_{\infty}}{(q^{-\lambda_1 + M_1 + 1} , q^{- \lambda_0 + M_0 + 1} ; q)_{\infty}}  \label{eq:prop2} \\
&\hspace{250pt} \cdot \frac{(\alpha /\beta ; q)_N}{(q; q)_N} (q^{ -(\lambda_0 + \lambda_1) } \beta x)^N  \nonumber \\
& = \frac{(q, q; q)_{\infty}}{(q^{-\lambda_0 + 1} , q^{- \lambda_1 + 1} ; q)_{\infty}} \sum_{N=0}^{\infty} \frac{(\alpha /\beta ; q)_N}{(q; q)_N} (q^{ -(\lambda_0 + \lambda_1) } \beta x)^N \nonumber \\
&\qquad \cdot 
\left( \sum_{M_0 = 0}^{\infty} \frac{(q^{-\lambda_0 + 1} ; q)_{M_0}}{(q; q)_{M_0}} (q^{\mu_0 + N} )^ {M_0} \right)
\left( \sum_{M_1 = 0}^{\infty} \frac{(q^{-\lambda_1 + 1}; q)_{M_1}}{(q; q)_{M_1}} (q^{\mu_0 + \mu_1 - \lambda_0 + N })^{M_1} \right) . \nonumber
\end{align}
It follows from the conditions $ |q^{\mu_0 + \mu_1 - \lambda_0}| < 1$ and $|q^{\mu_0}| < 1$ in equation \eqref{eq:prop41} and the $q$-binomial theorem that equation (\ref{eq:prop2}) is equal to
\begin{align}
& \frac{(q, q; q)_{\infty}}{(q^{-\lambda_0 + 1} , q^{- \lambda_1 + 1} ; q)_{\infty}} \\
&\quad \cdot \sum_{N=0}^{\infty} 
\frac{ (q^{\mu_0 -\lambda_0 + 1 + N} ; q)_{\infty} }{(q^{\mu_0 + N}  ; q)_{\infty}}
\frac{ (q^{\mu_0 + \mu_1 - (\lambda_0 + \lambda_1) + 1 + N } ; q)_{\infty} }{(q^{\mu_0 + \mu_1 - \lambda_0 + N } ; q)_{\infty}}  
\frac{(\alpha /\beta ; q)_N}{(q; q)_N} (q^{ -(\lambda_0 + \lambda_1) } \beta x)^N \nonumber \\
& = \frac{(q, q, q^{\mu_0 - \lambda_0 + 1} , q^{\mu_0 + \mu_1 - (\lambda_0 + \lambda_1) + 1 } ; q)_{\infty}}{(q^{-\lambda_0 + 1} , q^{- \lambda_1 + 1} , q^{\mu_0 } , q^{\mu_0 + \mu_1 - \lambda_0 } ; q)_{\infty}} \nonumber \\
&\qquad \cdot \sum_{N=0}^{\infty}  \frac{(\alpha /\beta , q^{\mu_0 }, q^{\mu_0 + \mu_1 - \lambda_0 } ; q)_N}{(q, q^{\mu_0 - \lambda_0 + 1} , q^{\mu_0 + \mu_1 - (\lambda_0 + \lambda_1) + 1 } ; q)_N} (q^{ -(\lambda_0 + \lambda_1) } \beta x)^N .  \nonumber
\end{align}
By combining with equation (\ref{sol:g1-11121111}), we obtain the proposition.
\end{proof}
If we choose $K_{\lambda_1}(x, s_1)$, $K_{\lambda_0}(s_1, s_0)$, and $y(s_0)$ differently from equation \eqref{eq:condPyxi} and suitably specialize the parameters $\xi_0$ and $\xi_1$, then we obtain $q$-hypergeometric series solutions whose representations differ from that in equation \eqref{eq:g1qlalaxqlax}. 
This will be explained in detail in Section \ref{sec:Intsoldegn}.
In addition, $q$-hypergeometric series solutions to equation \eqref{eq:111-21-111-g1} with representations 
different from that in equation \eqref{eq:g1qlalaxqlax} are listed in Appendix \ref{subsec:qhgs3}.

\section{Generalized $q$-hypergeometric equation of order $n$} \label{se:genphg-n}
In Section \ref{sec:qmcex}, we applied $mc^q_{\lambda_1} \circ add_{\mu_1}$ to the pair of $2 \times 2$ matrices $(G^{[2]}_0, G^{[2]}_1)$ in equation \eqref{eq:G0G1qhyp0} to obtain the pair of $3 \times 3$ matrices $(\overline{G}^{[3]}_0, \overline{G}^{[3]}_1)$ given in equations \eqref{mat:Gb30} and \eqref{mat:Gb31}.
In this section, we further apply the addition and the $q$-middle convolution repeatedly to obtain a pair of $n \times n$ matrices $(\overline{G}^{[n]}_0, \overline{G}^{[n]}_1)$.
We then derive a single $q$-difference equation of order $n$ from the resulting system and show that all its coefficients are polynomials of degree at most $1$ in $x$.
To apply the $q$-middle convolution successively, we impose the conditions on the parameters given in Proposition \ref{prop:Gb0nGb1n} below at each step.
These conditions ensure that the dimension reduction associated with the $q$-middle convolution yields a system of the expected dimension.
\begin{prop} \label{prop:Gb0nGb1n}
Let $(G^{[2]}_0, G^{[2]}_1)$ be the pair of matrices in equation (\ref{eq:G0G1qhyp0}).
Starting from this pair, successively apply 
\begin{equation*}
mc^q_{\lambda_j} \circ add_{\mu_j}, \quad j = 1, \ldots,  n-2 ,
\end{equation*}
where at the $j$-th step we assume that
\begin{equation*}
\det (B^{[j+1]}_0) \neq 0, \quad b^{[j+1]}_1 \neq 0, \quad \det (G^{[2(j+1)]}_0 + G^{[2(j+1)]}_1) \neq 0 .
\end{equation*}
Then for $n \in \mathbb{N}_{\ge 3}$, the resulting pair of $n \times n$ matrices $(\overline{G}^{[n]}_0, \overline{G}^{[n]}_1)$ is given by
\begin{align}
&\overline{G}^{[n]}_0 \label{eq:Gn0}
= \begin{pmatrix}
    a^{[n]}_{11} & a^{[n]}_{12}         &          &        &                  & \\
             & a^{[n]}_{22}         & a^{[n]}_{23} &        & \text{\LARGE{O}} & \\
             &                  & \ddots   & \ddots & \\
             &                  &          & \ddots & \ddots \\
             & \text{\LARGE{O}} &          &        & a^{[n]}_{n-1,n-1}    & a^{[n]}_{n-1,n} \\
             &                  &          &        &                  & 0
  \end{pmatrix} , \\
&\overline{G}^{[n]}_1 \label{eq:Gn1}
= \begin{pmatrix}
          &       &       & \text{\Large{O}} &           &           &   \\
          &       &       & \vdots   &           &           &    \\
          &       &       & \text{\Large{O}} &           &           &   \\
    b^{[n]}_1 & b^{[n]}_2 & b^{[n]}_3 & \dots    & b^{[n]}_{n-2} & b^{[n]}_{n-1} & b^{[n]}_n
\end{pmatrix},
\end{align}
where
\begin{align*}
& a^{[n]}_{ii} = 1 - q^{\sum_{k = i-1}^{n-2}(\mu_k - \lambda_k)}, \quad i =1, \ldots, n - 1, \\
& a^{[n]}_{12} = q^{\sum_{k = 0}^{n-2}(\mu_k - \lambda_k)} (1 - \beta/\alpha), \\
& a^{[n]}_{23} = q^{\sum_{k = 1}^{n-2}\mu_k - \sum_{k = 0}^{n-2}\lambda_k} (1 - q^{\mu_0}), \\
& a^{[n]}_{i, i + 1} = q^{\sum_{k = i-1}^{n-2}\mu_k - \sum_{k = i-2}^{n-2}\lambda_k} (1 - q^{\sum_{k = 0}^{i-2}\mu_k - \sum_{k = 0}^{i-3}\lambda_k}), \quad n \ge 4, \quad i = 3, \ldots, n-1, \\
& b^{[n]}_1 = q^{-\lambda_{n-2}} - q^{\sum_{k = 0}^{n-2}(\mu_k - \lambda_k)}, \\
& b^{[n]}_2 = a^{[n]}_{12} + (q^{-\lambda_{n-2}} - q^{\sum_{k = 1}^{n-2}(\mu_k - \lambda_k)} )  \frac{b^{[n-1]}_2}{b^{[n-1]}_1}, \\
& b^{[n]}_i = q^{\mu_{n-2} - \lambda_{n-2}} a^{[n-1]}_{i-1, i} \frac{b^{[n-1]}_{i-1}}{b^{[n-1]}_1} + (q^{-\lambda_{n-2}} - q^{\sum_{k = i-1}^{n-2}(\mu_k - \lambda_k)} ) \frac{b^{[n-1]}_i}{b^{[n-1]}_1}, \\
&\hspace{260pt} n \ge 4, \quad i = 3, \ldots, n-1, \\
& b^{[n]}_n = 1 - q^{-\lambda_{n-2}} + q^{\mu_{n-2} - \lambda_{n-2}} b^{[n-1]}_{n-1}  \\
& a^{[2]}_{12} = q^{\mu_0 - \lambda_0} (1 - \beta/\alpha), \quad a^{[2]}_{22} = 0, \quad b^{[2]}_1 = q^{-\lambda_0}(1 - q^{\mu_0}), \quad b^{[2]}_2 = a^{[2]}_{12} + 1 - q^{-\lambda_0} .
\end{align*}
\end{prop}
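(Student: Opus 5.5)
We argue by induction on $n$, taking $n=3$ as the base case. For $n=3$ the formulas should reproduce $\overline{G}^{[3]}_0,\overline{G}^{[3]}_1$ in \eqref{mat:Gb30}--\eqref{mat:Gb31}. Indeed $a^{[3]}_{11}=1-q^{\mu_0+\mu_1-\lambda_0-\lambda_1}$, $a^{[3]}_{22}=1-q^{\mu_1-\lambda_1}$, and the formulas for $a^{[3]}_{12},a^{[3]}_{23}$ match. For the bottom row we use $b^{[2]}_1=q^{-\lambda_0}(1-q^{\mu_0})$ and $b^{[2]}_2=f_0q^{-\lambda_0}$. Then $b^{[3]}_1=f_1$, and $b^{[3]}_2=a^{[3]}_{12}+(q^{-\lambda_1}-q^{\mu_1-\lambda_1})f_0/(1-q^{\mu_0})=f_2$. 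Finally $b^{[3]}_3=1-q^{-\lambda_1}+q^{\mu_1-\lambda_1}b^{[2]}_2=f_3$. This is a direct check.

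For the inductive step, assume $(\overline{G}^{[n-1]}_0,\overline{G}^{[n-1]}_1)$ has the stated shape, and set $j=n-2$.

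\emph{Addition.} Applying $add_{\mu_j}$ gives $B^{[n-1]}_0=(1-q^{\mu_j})I+q^{\mu_j}\overline{G}^{[n-1]}_0$ and $B^{[n-1]}_1=q^{\mu_j}\overline{G}^{[n-1]}_1$. The matrix $B^{[n-1]}_0$ is upper bidiagonal with diagonal entries $1-q^{\mu_j}(1-a^{[n-1]}_{ii})$, which raises each exponent sum by $\mu_j$; its last diagonal entry is $1-q^{\mu_j}$. The matrix $B^{[n-1]}_1$ has rank one, with nonzero row only the last one, equal to $q^{\mu_j}(b^{[n-1]}_1,\dots,b^{[n-1]}_{n-1})$.

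\emph{Convolution.} Apply $c^q_{\lambda_j}$ to obtain the $2(n-1)\times 2(n-1)$ matrices $G^{[2(n-1)]}_0,G^{[2(n-1)]}_1$.

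\emph{The spaces $\mathcal K$ and $\mathcal L$.} The hypothesis $\det B^{[n-1]}_0\neq0$ gives $\ker B^{[n-1]}_0=0$. Since $b^{[n-1]}_1\ne0$, $\ker B^{[n-1]}_1$ is the hyperplane orthogonal to the row $(b^{[n-1]}_1,\dots,b^{[n-1]}_{n-1})$, so $\dim\mathcal K=n-2$. The hypothesis $\det(G_0+G_1)\neq0$ gives $\mathcal L=0$. The quotient therefore has dimension $2(n-1)-(n-2)=n$.

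\emph{Change of basis.} Generalizing $P^{[4]}$, we keep the first $n-1$ basis vectors $e_1,\dots,e_{n-1}$ and the first vector $e_n$ of the second block. We complete these with a basis of $0\oplus\ker B^{[n-1]}_1$. An explicit choice is $v_k=e_{n-1+k}-(b^{[n-1]}_k/b^{[n-1]}_1)\,e_n$ for $k=2,\dots,n-1$. Writing $P$ for the matrix with these columns, $P^{-1}$ acts on the second block by sending the coordinate vector $w$ to $(w_1+\sum_k (b_k/b_1)w_k,\ w_2,\dots)$.

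\emph{Computing the quotient action.} We compute $P^{-1}G_iP$ and read off the upper-left $n\times n$ block, which is the action on the quotient. For $G_0$, the first block row is $(q^{-\lambda_j}B_0+(1-q^{-\lambda_j})I,\ q^{-\lambda_j}B_1)$. Its upper-left part is bidiagonal with diagonal entries $1-q^{\sum(\mu_k-\lambda_k)}$, where the new $\lambda_j$ enters through the factor $q^{-\lambda_j}$. The superdiagonal entries are multiplied by $q^{\mu_j-\lambda_j}$. The column $e_n$ contributes the entry in position $(n-1,n)$: the last row of $q^{-\lambda_j}B_1$ has first entry $q^{\mu_j-\lambda_j}b^{[n-1]}_1$. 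This produces $a^{[n]}_{n-1,n}$ once we use the recursion $b^{[n-1]}_1=q^{-\lambda_{n-3}}(1-q^{\sum_{k\le n-3}(\mu_k-\lambda_k)+\lambda_{n-3}})$. The columns $v_k$ lie in the kernel of $B_1$ and so map into $\mathcal K$, which contributes nothing to the quotient.

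For $G_1$, the second block row is $(q^{-\lambda_j}B_0,\ q^{-\lambda_j}B_1+(1-q^{-\lambda_j})I)$. Only its first row survives in the quotient. After applying $P^{-1}$, the surviving row is row $1$ of the second block plus $\sum_k (b_k/b_1)\times$(row $k$ of the second block).
\begin{itemize}[label={}]
\item Column $1$: row $1$ of $q^{-\lambda_j}B_0$ gives $q^{-\lambda_j}-q^{\sum(\mu-\lambda)}$, which is $b^{[n]}_1$.
\item Columns $i$ of the first block: we pick up the diagonal and superdiagonal entries of $B_0$, weighted by $b^{[n-1]}_i/b^{[n-1]}_1$ and $b^{[n-1]}_{i-1}/b^{[n-1]}_1$. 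This yields the recursion for $b^{[n]}_i$ with $3\le i\le n-1$; for $i=2$ the term $a_{12}$ comes from row $1$.
\item Column $e_n$: this gives $1-q^{-\lambda_j}+q^{\mu_j-\lambda_j}b^{[n-1]}_{n-1}$. The last term arises because row $n-1$ of the second block contains $q^{-\lambda_j}B_1$, weighted by $b^{[n-1]}_{n-1}/b^{[n-1]}_1$, which cancels $b_1$.
\end{itemize}

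\emph{Main obstacle.} The hard part is the bookkeeping in the $G_1$ computation. We must confirm that the contributions of the $v_k$ columns really vanish modulo $\mathcal K$, and that the diagonal entries of $q^{-\lambda_j}B_0$ combine into exactly $q^{-\lambda_j}-q^{\sum_{k=i-1}^{n-2}(\mu_k-\lambda_k)}$ as claimed. We also expect some care in matching the index shifts in the exponent sums, especially in $a_{i,i+1}$. The case $i=2$ needs a separate check against the formula for $a^{[n]}_{23}$.
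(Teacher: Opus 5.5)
Your proposal is correct and follows the paper's proof essentially step by step. It uses the same base-case check against $\overline{G}^{[3]}_0,\overline{G}^{[3]}_1$, the same dimension count for $\mathcal K$ and $\mathcal L$, the same block change of basis $\mathrm{diag}(I,p)$ (your $v_k$ are the paper's columns of $p_{n-1}$ rescaled by $1/b^{[n-1]}_1$, which leaves the relevant first row of $p^{-1}$ unchanged), and the same reading-off of the first row of the second block, where the bookkeeping you flag closes via $q^{-\lambda_{n-2}}\bigl(1-q^{\mu_{n-2}}(1-a^{[n-1]}_{ii})\bigr)=q^{-\lambda_{n-2}}-q^{\sum_{k=i-1}^{n-2}(\mu_k-\lambda_k)}$, with the $v_k$ columns handled automatically by the invariance of $\mathcal K$.
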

\begin{proof}
We prove the proposition by induction on $n$.

For $n = 3$, the right-hand side of (\ref{eq:Gn0}) is written as
\begin{align*}
&\begin{pmatrix}
  a^{[3]}_{11} & a^{[3]}_{12} & 0 \\
  0        & a^{[3]}_{22} & a^{[3]}_{23} \\
  0        & 0        & 0
\end{pmatrix} \\
&\quad = \begin{pmatrix}
    1 - q^{\mu_0 + \mu_1 - (\lambda_0 + \lambda_1)} & q^{\mu_0 + \mu_1 - (\lambda_0 + \lambda_1)} (1 - \beta/\alpha) & 0 \\
    0                                             & 1 - q^{\mu_1 - \lambda_1}                                    & q^{\mu_1 - (\lambda_0 + \lambda_1)} (1 - q^{\mu_0}) \\
    0                                             & 0                                                            & 0
  \end{pmatrix} ,
\end{align*}
and this matrix coincides with $\overline{G}^{[3]}_0$ given by \eqref{mat:Gb30}.
The right-hand side of (\ref{eq:Gn1}) is written as
\begin{equation} \label{eq:Gb31}
\begin{pmatrix}
  0     & 0      & 0  \\
  0     & 0      & 0  \\
  b^{[3]}_1 & b^{[3]}_2 & b^{[3]}_3
\end{pmatrix}.
\end{equation}
It follows from 
\begin{align*}
& b^{[3]}_1 = q^{-\lambda_1 } - q^{\mu_0 + \mu_1 - (\lambda_0 + \lambda_1)} , \\
& b^{[3]}_2 = q^{\mu_0 + \mu_1 - (\lambda_0 + \lambda_1)} (1 - \beta/\alpha) + q^{-\lambda_1} (1 - q^{\mu_1 })\frac{q^{\mu_0 - \lambda_0} (1 - \beta/\alpha) + 1 - q^{-\lambda_0}}{q^{-\lambda_0} ( 1 - q^{\mu_0})} , \\
& b^{[3]}_3 = q^{\mu_1 - \lambda_1} (1 - q^{-\lambda_0}) + 1 - q^{-\lambda_1} + q^{\mu_0 + \mu_1 - (\lambda_0 + \lambda_1)} (1 - \beta/\alpha)
\end{align*}
that $b^{[3]}_1$, $b^{[3]}_2$, and $b^{[3]}_3$ are equal to $f_1$, $f_2$, and $f_3$ in equation \eqref{eq:f1-3}, respectively.
Namely, matrix (\ref{eq:Gb31}) coincides with $\overline{G}^{[3]}_1$ given by \eqref{mat:Gb31}.
The pair of matrices $(\overline{G}^{[3]}_0, \overline{G}^{[3]}_1)$ is obtained by applying $mc^q_{\lambda_1} \circ add_{\mu_1}$ to $(G^{[2]}_0, G^{[2]}_1)$ under the conditions $\det (B^{[2]}_0) \neq 0$, $b^{[2]}_1 \neq 0$, and $\det (G^{[4]}_0 + G^{[4]}_1) \neq 0$.
Hence the proposition holds for $n = 3$.

Suppose that the proposition holds for some $n \ge 3$.
We show that it also holds for $n+1$.
We apply $mc^q_{\lambda_{n-1}} \circ add_{\mu_{n-1}}$ to the pair of $n \times n$ matrices $(\overline{G}^{[n]}_0, \overline{G}^{[n]}_1)$.
Set $add_{\mu_{n-1}} (\overline{G}^{[n]}_0, \overline{G}^{[n]}_1) = (B^{[n]}_0, B^{[n]}_1)$.
The matrices $B^{[n]}_0$ and $B^{[n]}_1$ are the following $n \times n$ matrices:
\begin{align}
B^{[n]}_0 
&= (1 - q^{\mu_{n-1}})I_n + q^{\mu_{n-1}} \overline{G}^{[n]}_0  \label{mat:Bn+1-0} \\
&= \begin{pmatrix}
  d_{11} & d_{12}            &          &        &                  & \\
         & d_{22}            & d_{23}   &        & \text{\LARGE{O}} & \\
         &                   & \ddots   & \ddots &                  & \\
         &                   &          & \ddots & \ddots           & \\
         & \text{\LARGE{O}}  &          &        & d_{n-1,n-1}      & d_{n-1,n} \\
         &                   &          &        &                  & 1 - q^{\mu_{n-1}}
 \end{pmatrix} , \nonumber  
\end{align}
where for $i = 1,\ldots, n-1$
\begin{align*}
& d_{ii} = 1 - q^{\mu_{n-1}} + q^{\mu_{n-1}}a^{[n]}_{ii}, \\
& d_{i,i+1} = q^{\mu_{n-1}}a^{[n]}_{i,i+1}, 
\end{align*}
\begin{align}
B^{[n]}_1
& = q^{\mu_{n-1}} \overline{G}^{[n]}_1 \label{mat:Bn+1-1}\\
& = \begin{pmatrix}
          &       &       & \text{\Large{O}} &           &           &   \\
          &       &       & \vdots   &           &           &    \\
          &       &       & \text{\Large{O}} &           &           &   \\
    q^{\mu_{n-1}}b^{[n]}_1 & q^{\mu_{n-1}}b^{[n]}_2 & q^{\mu_{n-1}}b^{[n]}_3 & \dots    & q^{\mu_{n-1}}b^{[n]}_{n-2} & q^{\mu_{n-1}} b^{[n]}_{n-1} & q^{\mu_{n-1}}b^{[n]}_n
\end{pmatrix}. \nonumber
\end{align}
We apply $c^q_{\lambda_{n-1}}$ to $(B^{[n]}_0, B^{[n]}_1)$ and obtain the pair of $2n \times 2n$ matrices $(G^{[2n]}_0, G^{[2n]}_1)$ as follows:
\begin{align*}
G^{[2n]}_0 
&= \begin{pmatrix}
   q^{-\lambda_{n-1}} B^{[n]}_0 + (1 - q^{-\lambda_{n-1}}) I_n & q^{-\lambda_{n-1}} B^{[n]}_1 \\
   O_n                                                         & O_n
\end{pmatrix} 
= \begin{pmatrix}
    g_{01} & g_{02} \\
    O_n    & O_n
\end{pmatrix} , 
\end{align*}
where
\begin{align*}
& g_{01} 
= \begin{pmatrix}
  e^0_{11} & e^0_{12}          &          &        &                  & \\
           & e^0_{22}          & e^0_{23} &        & \text{\LARGE{O}} & \\
           &                   & \ddots   & \ddots &                  & \\
           &                   &          & \ddots & \ddots           & \\
           & \text{\LARGE{O}}  &          &        & e^0_{n-1,n-1}    & e^0_{n-1,n} \\
           &                   &          &        &                  & 1 - q^{\mu_{n-1} - \lambda_{n-1}}
 \end{pmatrix} , \\
& g_{02}
= \begin{pmatrix}
          &       &       & \text{\Large{O}} &           &           &   \\
          &       &       & \vdots   &           &           &    \\
          &       &       & \text{\Large{O}} &           &           &   \\
    q^{\mu_{n-1} - \lambda_{n-1}}b^{[n]}_1 & q^{\mu_{n-1} - \lambda_{n-1}}b^{[n]}_2 &   & \dots    &   & q^{\mu_{n-1} - \lambda_{n-1}} b^{[n]}_{n-1} & q^{\mu_{n-1} - \lambda_{n-1}}b^{[n]}_n
\end{pmatrix},
\end{align*}
with
 \begin{align*}
& e^0_{ii} = 1 +  q^{\mu_{n-1} - \lambda_{n-1}} (a^{[n]}_{ii} -1), \\
& e^0_{i,i+1} = q^{\mu_{n-1} - \lambda_{n-1}} a^{[n]}_{i,i+1}, \quad i = 1, \ldots, n-1,
\end{align*}
\begin{align*}
G^{[2n]}_1 
&= \begin{pmatrix}
   O_n                           &  O_n \\
   q^{-\lambda_{n-1}} B^{[n]}_0  & q^{-\lambda_{n-1}} B^{[n]}_1 + (1 - q^{-\lambda_{n-1}}) I_n 
\end{pmatrix} 
= \begin{pmatrix}
    O_n    & O_n \\
    g_{11} & g_{12} 
\end{pmatrix} , 
\end{align*}
where
\begin{align*}
& g_{11}
= \begin{pmatrix}
  e^1_{11} & e^1_{12}          &          &        &                  & \\
           & e^1_{22}          & e^1_{23} &        & \text{\LARGE{O}} & \\
           &                   & \ddots   & \ddots &                  & \\
           &                   &          & \ddots & \ddots           & \\
           & \text{\LARGE{O}}  &          &        & e^1_{n-1,n-1}    & e^1_{n-1,n} \\
           &                   &          &        &                  & (1 - q^{\mu_{n-1}})/ q^{\lambda_{n-1}}
\end{pmatrix} , \\
& g_{12}
= \begin{pmatrix}
   1 - q^{-\lambda_{n-1}}           & 0       & \dots  & \dots  & \dots  & \dots   & 0  \\
        0                                     &  \ddots & \ddots &        &        &         & \vdots \\
        \vdots                                &  \ddots & \ddots & \ddots &        &         & \vdots \\
        \vdots                                &         & \ddots & \ddots & \ddots &         & \vdots \\
        \vdots                                &         &        & \ddots & \ddots & \ddots & \vdots \\
        0                                     & \dots   & \dots  & \dots  &    0       &  1 - q^{-\lambda_{n-1}}  & 0  \\
 q^{\mu_{n-1} - \lambda_{n-1}}b^{[n]}_1 & \dots   & \dots & \dots &  \dots  & q^{\mu_{n-1} - \lambda_{n-1}} b^{[n]}_{n-1} & e^2_{nn}
\end{pmatrix}, 
\end{align*}
with
\begin{align*}
& e^1_{ii} = \frac{1 +  q^{\mu_{n-1}} (a^{[n]}_{ii} -1)}{q^{\lambda_{n-1}}}, \\
& e^1_{i,i+1} = q^{\mu_{n-1} - \lambda_{n-1}} a^{[n]}_{i,i+1}, \quad i = 1, \ldots, n-1, \\
& e^2_{nn} = 1 - q^{-\lambda_{n-1}} + q^{\mu_{n-1} - \lambda_{n-1}} b^{[n]}_n .
\end{align*}

Now we consider the $q$-middle convolution $mc^q_{\lambda_{n-1}}$ to $(B^{[n]}_0, B^{[n]}_1)$.
Since $b^{[n]}_1 \neq 0$ by the assumption of the proposition, we have $\dim \ker (B^{[n]}_1) = n-1$, and a basis of $\ker (B^{[n]}_1)$ is given by
\begin{equation*}
\begin{pmatrix}
 -b^{[n]}_2 \\ b^{[n]}_1 \\  0 \\ \vdots \\ 0 
\end{pmatrix} ,
\begin{pmatrix}
 -b^{[n]}_3 \\ 0 \\ b^{[n]}_1 \\  0 \\ \vdots \\ 0 
\end{pmatrix},
\dots , 
\begin{pmatrix}
 -b^{[n]}_{n-1} \\ 0 \\ \vdots \\ 0\\ b^{[n]}_1 \\ 0
\end{pmatrix},
\begin{pmatrix}
 -b^{[n]}_n \\ 0 \\ \vdots \\ 0\\ b^{[n]}_1
\end{pmatrix} .
\end{equation*}
Under the assumption $\det(B^{[n]}_0) \neq 0$, we have $\dim \mathcal{K} = n-1$.
The assumption $\det (G^{[2n]}_0 + G^{[2n]}_1) \neq 0$ implies that $\mathcal{L} = \{ 0 \}$, and hence $\dim \mathcal{L} = 0$.
Therefore, under the assumptions of the proposition, we have $\dim (\C^{2n}/(\mathcal{K}+ \mathcal{L})) = n+1$.
To obtain matrix representations of $mc^q_{\lambda_{n-1}}(B^{[n]}_0, B^{[n]}_1)$, we apply a simultaneous similarity transformation to matrices $G^{[2n]}_0$ and $G^{[2n]}_1$ by the $2n \times 2n$ invertible matrix $P^{[2n]}$, which is  written as 
\begin{equation}
P^{[2n]}
= \begin{pmatrix}
   I_n & O_n \\
   O_n & p_n
\end{pmatrix} ,
\quad
 p_n 
= \begin{pmatrix}
    1      & -b^{[n]}_2 & -b^{[n]}_3  & \dots  & -b^{[n]}_{n-1} & -b^{[n]}_{n} \\
    0      & b^{[n]}_1  & 0           & \dots  &    0           & 0 \\
    \vdots & 0          & b^{[n]}_1   & \ddots &                & \vdots \\
    \vdots & \vdots     &  0          & \ddots & \ddots         & \vdots \\
    \vdots & \vdots     &  \vdots     & \ddots & b^{[n]}_{1}    & 0 \\
    0      & 0          & 0           & \dots  & 0              & b^{[n]}_{1}
\end{pmatrix} .
\end{equation} 
For $G^{[2n]}_0$, we have
\begin{equation*}
(P^{[2n]})^{-1}G^{[2n]}_0 P^{[2n]}
= \begin{pmatrix}
   I_n & O_n \\
   O_n & p_n^{-1}
  \end{pmatrix}
  \begin{pmatrix}
   g_{01} & g_{02} \\
   O_n    & O_n 
  \end{pmatrix}
  \begin{pmatrix}
   I_n & O_n \\
   O_n & p_n 
  \end{pmatrix}
= \begin{pmatrix}
   g_{01} & g_{02}\, p_n \\
   O_n    & O_n 
  \end{pmatrix} ,
\end{equation*}
where
\begin{equation*}
g_{02}\, p_n 
= \begin{pmatrix}
                                     & O      &       &  \\
                                     & \vdots &       & \\
                                     & O      &       & \\
  q^{\mu_{n-1} - \lambda_{n-1}}b^{[n]}_1 & 0      & \dots & 0
 \end{pmatrix} .
\end{equation*}
The upper-left $(n+1) \times (n+1)$ submatrix of $(P^{[2n]})^{-1}G^{[2n]}_0 P^{[2n]}$ is written as
\begin{align} \label{mat:Gb(n+1)0}
&\left(
\begin{array}{ccc|c}
 &        &  & 0 \\
 & g_{01} &  & \vdots \\
 &        &  & 0 \\
 &        &  & q^{\mu_{n-1} - \lambda_{n-1}}b^{[n]}_1 \\
\hline
0 & \dots & 0 & 0
\end{array}
\right) \\
&= \begin{pmatrix} \nonumber
  e^0_{11} & e^0_{12}        &          &        &               &                                   &   \\
           & e^0_{22}        & e^0_{23} &        &               &  \text{\huge{O}}                  &  \\
           &                 & \ddots   & \ddots &               &                                   &  \\
           &                 &          & \ddots & \ddots        &                                   &  \\
           &                 &          &        & e^0_{n-1,n-1} & e^0_{n-1,n}                       &  \\
           & \text{\huge{O}} &          &        &               & 1 - q^{\mu_{n-1} - \lambda_{n-1}} & q^{\mu_{n-1} - \lambda_{n-1}}b^{[n]}_1 \\
           &                 &          &        &               &                                   & 0
\end{pmatrix} .
\end{align}
It follows from 
\begin{align*}
& e^0_{ii} = 1 +  q^{\mu_{n-1} - \lambda_{n-1}} (a^{[n]}_{ii} -1) = 1 - q^{\sum_{k = i-1}^{n-1}(\mu_k - \lambda_k)} 
= a^{[n+1]}_{ii} \quad  i = 1,  \ldots, n - 1, \\
& 1 - q^{\mu_{n-1} - \lambda_{n-1}} = a^{[n+1]}_{n,n},  \\
& e^0_{i,i+1} = q^{\mu_{n-1} - \lambda_{n-1}} a^{[n]}_{i,i+1} = a^{[n+1]}_{i,i+1} \quad i = 1, \ldots n-1,\\
& q^{\mu_{n-1} - \lambda_{n-1}}b^{[n]}_1 = a^{[n+1]}_{n,n+1} 
\end{align*}
that matrix (\ref{mat:Gb(n+1)0}) coincides with matrix $\overline{G}^{[n+1]}_0$. 
For $G^{[2n]}_1$, we have
\begin{equation*}
(P^{[2n]})^{-1}G^{[2n]}_1 P^{[2n]}
= \begin{pmatrix}
   I_n & O_n \\
   O_n & p_n^{-1}
  \end{pmatrix}
  \begin{pmatrix}
   O_n    & O_n \\
   g_{11} & g_{12}    
  \end{pmatrix}
  \begin{pmatrix}
   I_n & O_n \\
   O_n & p_n
  \end{pmatrix}
= \begin{pmatrix}
   O_n & O_n \\
   p_n^{-1} \, g_{11} & p_n^{-1} \, g_{12} \, p_n  
\end{pmatrix} , 
\end{equation*}
where
\begin{align*}
& p_n^{-1} \, g_{11} 
= \begin{pmatrix}
   h_1 & h_2 & \dots  & h_{n-1} & h_n \\
       &     & \ast      &         &     \\
       &     & \vdots &         &     \\
       &     & \ast      &         &    
 \end{pmatrix}, \quad
 p_n^{-1} \, g_{12} \, p_n 
= \begin{pmatrix}
   h_{n+1} & 0    & \dots & 0 \\
           & \ast &       &   \\
           & \vdots &     &   \\
           & \ast   &      &
\end{pmatrix}.
\end{align*}
The upper-left $(n+1) \times (n+1)$ submatrix of $(P^{[2n]})^{-1}G^{[2n]}_1 P^{[2n]}$ is written as
\begin{equation}
\begin{pmatrix} \label{mat:Gb(n+1)1}
          &       &       & \text{\Large{O}} &           &           &   \\
          &       &       & \vdots           &           &           &    \\
          &       &       & \text{\Large{O}} &           &           &   \\
      h_1 & h_2   & h_3   & \dots            & h_{n-1}   & h_{n}     & h_{n+1}
\end{pmatrix}, 
\end{equation}
where
\begin{align*}
& h_1 = e^1_{11} 
= b^{[n+1]}_1,  \nonumber \\
& h_t = e^1_{t-1,t} b^{[n]}_{t-1}/b^{[n]}_1 + e^1_{tt}b^{[n]}_t/b^{[n]}_1 = b^{[n+1]}_t, \quad t = 2, \ldots, n-1, \nonumber \\
& h_{n} = e^1_{n-1, n} b^{[n]}_{n-1}/b^{[n]}_1 + (1- q^{\mu_{n-1}})b^{[n]}_{n}/(q^{\lambda_{n-1}}b^{[n]}_1) = b^{[n+1]}_{n}, \nonumber \\
& h_{n+1} = 1 - q^{-\lambda_{n-1}} + q^{\mu_{n-1} - \lambda_{n-1}}b^{[n]}_{n} = b^{[n+1]}_{n+1}. \nonumber  
\end{align*}
Hence matrix (\ref{mat:Gb(n+1)1}) coincides with matrix $\overline{G}^{[n+1]}_1$. 
Thus, the proposition holds for $n+1$.
This completes the proof.
\end{proof}
We are now ready to state the main result of this section.
\begin{thm} \label{thm:g1n-single-sec4}
Let $\overline{G}^{[n]}_0$ and $\overline{G}^{[n]}_1$ be the matrices given in Proposition \ref{prop:Gb0nGb1n}.
Assume that $a^{[n]}_{i,i+1} \neq 0$ for $i = 1, \ldots, n-1$.
The single $q$-difference equation of order $n$ for $g_1^{[n]}(x)$ derived from the system 
\begin{equation} \label{eq:genqhge-n}
 \frac{1}{-x} 
 \begin{pmatrix}
   g_1^{[n]}(qx) - g_1^{[n]}(x) \\
   \vdots \\
   g_n^{[n]}(qx) - g_n^{[n]}(x)
 \end{pmatrix} 
 = \left[ \frac{\overline{G}^{[n]}_0}{x} + \frac{\overline{G}^{[n]}_1}{x - 1/\alpha} \right]
 \begin{pmatrix}
   g_1^{[n]}(x) \\
   \vdots \\
   g_n^{[n]}(x)
 \end{pmatrix}
\end{equation}
has coefficients that are polynomials of degree at most $1$ in $x$.
More precisely, it can be written in the form
\begin{align}
&(\alpha x - 1) g_1^{[n]}(q^n x) + (U_{n-1} x + W_{n-1}) g_1^{[n]}(q^{n-1} x) +  (U_{n-2} x + W_{n-2}) g_1^{[n]}(q^{n-2} x) \label{eq:gn1single-thm}\\
&\qquad\qquad + \cdots + (U_{2} x + W_{2}) g_1^{[n]}(q^{2} x) + (U_{1} x + W_{1}) g_1^{[n]}(q x)  \nonumber \\
&\hspace{140pt} + (-1)^n q^{\sum_{k=0}^{n-2}(k+1)(\mu_k - \lambda_k)} (U_{0} x - 1) g_1^{[n]}(x) = 0, \nonumber
\end{align}
where $U_i$ $(i=0, \ldots, n-1)$ and $W_i$ $(i=1, \ldots, n-1)$ are constants.
In particular, if $U_0 \neq 0$, then equation \eqref{eq:gn1single-thm} is a generalized $q$-hypergeometric equation of order $n$.
\end{thm}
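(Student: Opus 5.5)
The plan is to eliminate $g_2^{[n]},\dots,g_n^{[n]}$ explicitly. The first $n-1$ rows of the system \eqref{eq:genqhge-n} give a triangular recursion, because $\overline{G}^{[n]}_1$ has nonzero entries only in the last row. All the $x$-dependence then sits in the last row.

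\textbf{Step 1: the first $n-1$ rows.} Write $g_i=g_i^{[n]}$ and use the shapes of $\overline{G}^{[n]}_0$ and $\overline{G}^{[n]}_1$ from Proposition \ref{prop:Gb0nGb1n}. For $i=1,\dots,n-1$, row $i$ of \eqref{eq:genqhge-n} reads $g_i(qx)-g_i(x)=-a^{[n]}_{ii}g_i(x)-a^{[n]}_{i,i+1}g_{i+1}(x)$. Since $a^{[n]}_{i,i+1}\neq0$ by hypothesis, this gives
\begin{equation*}
g_{i+1}(x)=\frac{(1-a^{[n]}_{ii})\,g_i(x)-g_i(qx)}{a^{[n]}_{i,i+1}} .
\end{equation*}
By induction on $i$, each $g_i(x)$ is a linear combination of $g_1(x),g_1(qx),\dots,g_1(q^{i-1}x)$ with \emph{constant} coefficients. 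Two of these coefficients matter later:
\begin{itemize}
\item the coefficient of $g_1(q^{i-1}x)$ is $(-1)^{i-1}/\prod_{k<i}a^{[n]}_{k,k+1}$;
\item the coefficient of $g_1(x)$ is $c_i=\prod_{k<i}(1-a^{[n]}_{kk})/\prod_{k<i}a^{[n]}_{k,k+1}$.
\end{itemize}
Consequently $g_n(qx)$ is a constant-coefficient combination of $g_1(qx),\dots,g_1(q^nx)$ only; it contains no $g_1(x)$ term.

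\textbf{Step 2: the last row.} Since $a^{[n]}_{nn}=0$, row $n$ is
\begin{equation*}
g_n(qx)-g_n(x)=-\frac{x}{x-1/\alpha}\sum_{j=1}^n b^{[n]}_j g_j(x).
\end{equation*}
Multiplying by $\alpha x-1$ gives
\begin{equation*}
(\alpha x-1)\bigl(g_n(qx)-g_n(x)\bigr)+\alpha x\sum_{j=1}^n b^{[n]}_j g_j(x)=0 .
\end{equation*}
Substituting the expressions from Step 1, every term is (a polynomial of degree at most $1$ in $x$) times $g_1(q^kx)$ for some $0\le k\le n$. This proves the degree claim.

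\textbf{Step 3: the extreme coefficients.} The only contribution to $g_1(q^nx)$ comes from $(\alpha x-1)g_n(qx)$, with coefficient $(\alpha x-1)(-1)^{n-1}/\prod_{k=1}^{n-1}a^{[n]}_{k,k+1}$. Multiplying the whole equation by $(-1)^{n-1}\prod_k a^{[n]}_{k,k+1}$ normalizes this leading coefficient to $\alpha x-1$.

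The coefficient of $g_1(x)$ comes from $-(\alpha x-1)c_n$ together with $\alpha x$ times constants. Its constant term is therefore $c_n$, which becomes $(-1)^{n-1}\prod_{i=1}^{n-1}(1-a^{[n]}_{ii})$ after the normalization. Using $1-a^{[n]}_{ii}=q^{\sum_{k=i-1}^{n-2}(\mu_k-\lambda_k)}$ and exchanging the order of summation,
\begin{equation*}
\prod_{i=1}^{n-1}(1-a^{[n]}_{ii})=q^{\sum_{k=0}^{n-2}(k+1)(\mu_k-\lambda_k)} .
\end{equation*}
Hence the coefficient of $g_1(x)$ has the form $(-1)^nq^{\sum_k(k+1)(\mu_k-\lambda_k)}(U_0x-1)$, with $U_0$ collecting the $x$-part. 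All intermediate coefficients are then named $U_jx+W_j$.

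\textbf{Main obstacle.} The main care point is the bookkeeping in Step 3. One must check two things: that $g_1(x)$ does not arise from $g_n(qx)$, and that the signs and products come out exactly as stated. Both follow from the triangular structure established in Step 1.

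Finally, if $U_0\neq0$, the leading and trailing coefficients both have nonzero constant and linear terms. The operator can then be factored into Roques' form $\mathcal{L}_q(\underline a;\underline b;\lambda)$ by matching the roots of the polynomials in $\sigma_q$ given by the $x^0$ and $x^1$ parts. That is exactly the statement that \eqref{eq:gn1single-thm} is a generalized $q$-hypergeometric equation of order $n$.
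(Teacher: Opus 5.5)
Your proposal is correct and follows the paper's route. You solve the first $n-1$ rows triangularly for $g_2^{[n]},\dots,g_n^{[n]}$ in terms of the shifts $g_1^{[n]}(q^kx)$, substitute into the last row, clear the factor $\alpha x-1$, and normalize by $\prod_{i=1}^{n-1}(-a^{[n]}_{i,i+1})$; this gives the factor $(-1)^n\prod_{i=1}^{n-1}(1-a^{[n]}_{ii})$ in front of $(U_0x-1)$, exactly as in the paper. Your closing factorization into Roques' form $\mathcal{L}_q$ justifies the final claim more explicitly than the paper, which only cites the nonvanishing of $\alpha$, $A$ and $U_0$.
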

\begin{proof}
Equation (\ref{eq:genqhge-n}) can be rewritten as the following $n$ equations:
\begin{align}
&g_1^{[n]}(qx) = (1 - a^{[n]}_{11}) g_1^{[n]}(x) - a^{[n]}_{12} g_2^{[n]}(x) , \label{eq:1}\tag{4.9.1} \\
&g_2^{[n]}(qx) = (1 - a^{[n]}_{22}) g_2^{[n]}(x) - a^{[n]}_{23} g_3^{[n]}(x) ,  \label{eq:2}\tag{4.9.2}\\
&\qquad \vdots \nonumber \\
&g_{k}^{[n]}(qx) = (1 - a^{[n]}_{k, k}) g_{k}^{[n]}(x) - a^{[n]}_{k, k+1} g_{k+1}^{[n]}(x) , \label{eq:k}\tag{4.9-k} \\
 &\qquad \vdots \nonumber \\
&g_{n-1}^{[n]}(qx) = (1 - a^{[n]}_{n-1, n-1}) g_{n-1}^{[n]}(x) - a^{[n]}_{n-1, n} g_n^{[n]}(x) , \label{eq:n-1}\tag{4.9.n-1} \\
&g_n^{[n]}(qx) = \frac{b^{[n]}_1 \alpha x}{1 - \alpha x} g_1^{[n]}(x) + \cdots + \frac{b^{[n]}_{n-1} \alpha x}{1 - \alpha x} g_{n-1}^{[n]}(x) \label{eq:n}\tag{4.9.n}  
+ \left(1 + \frac{b^{[n]}_{n} \alpha x}{1 - \alpha x} \right) g_n^{[n]}(x)   . \nonumber
\end{align}
Equation \eqref{eq:1} can be rewritten as 
\begin{equation}
  g_2^{[n]}(x) = \frac{-1}{a^{[n]}_{12}} g_{1}^{[n]}(qx) + \frac{1 - a^{[n]}_{11}}{a^{[n]}_{12}} g_{1}^{[n]}(x),\label{eq:i-1'}\tag{4.9.1'}
\end{equation}
and substituting equation \eqref{eq:i-1'} into equation \eqref{eq:2} yields 
\begin{equation*}
g_3^{[n]}(x) 
= \frac{(-1)^2}{a^{[n]}_{12}a^{[n]}_{23}} g_{1}^{[n]}(q^2 x) 
- \frac{2 - a^{[n]}_{11} - a^{[n]}_{22}}{a^{[n]}_{12}a^{[n]}_{23}} g_{1}^{[n]}(qx) + \frac{(1 - a^{[n]}_{11})(1 - a^{[n]}_{22})}{a^{[n]}_{12}a^{[n]}_{23}}  g_{1}^{[n]}(x). 
\end{equation*}
By repeating this procedure, $g_2^{[n]}(x), g_3^{[n]}(x), \ldots, g_n^{[n]}(x)$ can be expressed as linear combinations of $g_{1}^{[n]}(x), \ldots, g_{1}^{[n]}(q^{n-1}x)$, i.e., for $k = 2, \ldots, n$,
\begin{equation}
g_{k}^{[n]}(x) = \prod_{i = 1}^{k-1} \frac{(-1)}{a^{[n]}_{i,i+1}} g_{1}^{[n]}(q^{k-1}x) + C_{k-2} g_{1}^{[n]}(q^{k-2}x) + \cdots + C_1 g_{1}^{[n]}(qx) + \prod_{i = 1}^{k - 1} \frac{1 - a^{[n]}_{ii}}{a^{[n]}_{i,i+1}} g_{1}^{[n]}(x), \label{eq:gkbyg1} 
\end{equation}
where $C_1, \ldots, C_{k-2}$ are constants.
Substituting (\ref{eq:gkbyg1}) into equation \eqref{eq:n}, we obtain a single $q$-difference equation of order $n$ for $g_1^{[n]}(x)$ as follows: 
\begin{align} \label{eq:n'} \tag{4.9.n'}
&\prod_{i = 1}^{n-1} \frac{(-1)}{a^{[n]}_{i,i+1}}  g_{1}^{[n]}(q^{n}x) + 
\cdots + \prod_{i = 1}^{n - 1} \frac{1 - a^{[n]}_{ii}}{a^{[n]}_{i,i+1}} g_{1}^{[n]}(qx) \\
&\quad = \frac{b^{[n]}_1 \alpha x}{1 - \alpha x} g_1^{[n]}(x) + \frac{b^{[n]}_2 \alpha x}{1 - \alpha x} \left( \frac{-1}{a^{[n]}_{12}} g_{1}^{[n]}(qx) + \frac{1 - a^{[n]}_{11}}{a^{[n]}_{12}} g_{1}^{[n]}(x) \right) + \cdots \nonumber \\
&\quad + \frac{b^{[n]}_{n-1} \alpha x}{1 - \alpha x} \left( \prod_{i = 1}^{n-2}  \frac{(-1)}{a^{[n]}_{i, i+1}} g_{1}^{[n]}(q^{n-2}x) + 
\cdots + \prod_{i = 1}^{n-2} \frac{1 - a^{[n]}_{ii}}{a^{[n]}_{i,i+1}} g_{1}^{[n]}(x) \right) \nonumber \\
&\quad +  \left( 1 + \frac{b^{[n]}_n \alpha x}{1 - \alpha x} \right) \left( \prod_{i = 1}^{n-1} \frac{(-1)}{a^{[n]}_{i,i+1}}  g_{1}^{[n]}(q^{n-1}x) + 
\cdots + \prod_{i = 1}^{n - 1} \frac{1 - a^{[n]}_{ii}}{a^{[n]}_{i,i+1}} g_{1}^{[n]}(x) \right). \nonumber
\end{align}
The coefficient of $g_{1}^{[n]}(q^{n}x)$ is $\displaystyle \prod_{i = 1}^{n-1}  \frac{(-1)}{a^{[n]}_{i,i+1}}$, 
while each of the other coefficients is of the form $\displaystyle \frac{\text{a linear polynomial in }x}{1 - \alpha x}$.
Therefore, multiplying the entire equation by $\displaystyle \prod_{i = 1}^{n-1} (-a^{[n]}_{i,i+1}) (\alpha x -1)$, we obtain
\begin{align*}
&(\alpha x - 1) g_1^{[n]}(q^n x) + (U_{n-1} x + W_{n-1}) g_1^{[n]}(q^{n-1} x) +  (U_{n-2} x + W_{n-2}) g_1^{[n]}(q^{n-2} x) \\
&\quad + \cdots + (U_{2} x + W_{2}) g_1^{[n]}(q^{2} x) + (U_{1} x + W_{1}) g_1^{[n]}(q x) + A(U_{0} x - 1) g_1^{[n]}(x) = 0,
\end{align*}
where $U_i$ and $W_i$ $(i=1, \ldots, n-1)$ are constants, and 
\begin{equation*}
A = (-1)^{n} \prod_{i=1}^{n-1} (1 - a^{[n]}_{ii}) = (-1)^n q^{\sum_{k=0}^{n-2}(k+1)(\mu_k - \lambda_k)} \neq 0, 
\end{equation*}
\begin{align*}
U_0 &= -\Biggl\{ \prod_{i = 1}^{n - 1} \frac{a^{[n]}_{i,i + 1}}{1 - a^{[n]}_{ii}} b^{[n]}_{1} + \prod_{i = 2}^{n - 1} \frac{a^{[n]}_{i,i + 1}}{1 - a^{[n]}_{ii}} b^{[n]}_{2} + \cdots + \prod_{i = n-2}^{n - 1} \frac{a^{[n]}_{i,i + 1}}{1 - a^{[n]}_{ii}} b^{[n]}_{n-2} \\
&\hspace{140pt} + \cdots + \frac{a^{[n]}_{n - 1,n}}{1 - a^{[n]}_{n-1, n-1}} b^{[n]}_{n-1} + b^{[n]}_{n} - 1 \Biggr\}  \alpha  .
\end{align*}
Thus, all the coefficients in equation \eqref{eq:gn1single-thm} are polynomials of degree at most $1$ in $x$.
Since $\alpha \neq 0$ and $A \neq 0$, if $U_0 \neq 0$, then equation \eqref{eq:gn1single-thm} is a generalized $q$-hypergeometric equation of order $n$. 
\end{proof}
For the explicit equation of order $3$ given by \eqref{eq:111-21-111-g1} and those of orders $4$-$6$ given in Appendix \ref{sec:geneq4-6}, we verify that $U_0 = q^{-(\lambda_0 + \cdots + \lambda_{n-2})}\beta \neq 0$ for $n = 3,4,5,6$.
These examples suggest that $U_0 = q^{-(\lambda_0 + \cdots + \lambda_{n-2})}\beta \neq 0$ also holds for arbitrary $n$.

Single $q$-difference equations satisfied by $g_{k}^{[n]}(x)$ $(k = 2,\ldots, n)$ can also be obtained, as in the case of $g_{1}^{[n]}(x)$, by eliminating all the other functions.
We omit their explicit derivations.

\section{$q$-Integral solutions to generalized $q$-hypergeometric equation of order $n$} \label{sec:Intsoldegn}
In this section, we investigate a $q$-integral representation of solutions to equation \eqref{eq:genqhge-n}.
Successive applications of the $q$-middle convolution yield a multiple integral representation of a solution.
In particular, a solution to equation \eqref{eq:genqhge-n} is represented by an $(n-1)$-fold $q$-integral.
We first give sufficient conditions for the convergence of this representation.
We then determine choices of the functions $K_{\lambda_j}$ and $y$, together with specializations of the 
parameters $\xi_j$, for which the multiple $q$-integral can be transformed into a $q$-hypergeometric series  ${}_n \phi_{n-1}$.

First, based on Theorem \ref{thm:suffcondconv}, we investigate sufficient conditions on parameters under which the $q$-difference equation
\begin{equation} \label{eq:Ygn}
\frac{\widetilde{V}^{[2(n-1)]}(qx) - \widetilde{V}^{[2(n-1)]}(x)}{-x} = \left[ \frac{G^{[2(n-1)]}_0}{x} + \frac{G^{[2(n-1)]}_1}{x - 1/\alpha} \right] \widetilde{V}^{[2(n-1)]}(x) 
\end{equation}
admits $q$-integral solutions.
From equations \eqref{mat:Bn+1-0} and  \eqref{mat:Bn+1-1}, the matrices $I_{n-1} - B^{[n-1]}_0$ and $I_{n-1} - B^{[n-1]}_0 - B^{[n-1]}_1$ are given as follows:
\begin{align*}
I_{n-1} - B^{[n-1]}_0
&= \begin{pmatrix}
  \widetilde{d}_{11} & \widetilde{d}_{12}     &              &        &                     & \\
               & \widetilde{d}_{22}     & \widetilde{d}_{23} &        & \text{\LARGE{O}}    & \\
               &                  & \ddots       & \ddots &                     & \\
               &                  &              & \ddots & \ddots              & \\
               & \text{\LARGE{O}} &              &        & \widetilde{d}_{n-2, n-2}  & \widetilde{d}_{n-2,n-1} \\
               &                  &              &        &                     & q^{\mu_{n-2}}
 \end{pmatrix} , \\
\end{align*}
\begin{align*}
&I_{n-1} - B^{[n-1]}_0 - B^{[n-1]}_1 \\
& = \begin{pmatrix}
  \widetilde{d}_{11} & \widetilde{d}_{12}     &              &        &                     & \\
       0       & \widetilde{d}_{22}     & \widetilde{d}_{23} &        & \text{\LARGE{O}}    & \\
   \vdots      &   \ddots         & \ddots       & \ddots &                     & \\
   \vdots      &                  &   \ddots     & \ddots & \ddots              & \\
       0       &    \cdots        &   \cdots     &   0    & \widetilde{d}_{n-2, n-2}  & \widetilde{d}_{n-2,n-1} \\
-q^{\mu_{n-2}}b^{[n-1]}_1 & -q^{\mu_{n-2}}b^{[n-1]}_2 & \cdots  & \cdots & -q^{\mu_{n-2}}b^{[n-1]}_{n-2} & q^{\mu_{n-2}} (1 - b^{[n-1]}_{n-1})
\end{pmatrix},
\end{align*}
where
\[
\widetilde{d}_{ii} = q^{\mu_{n-2}}(1 - a^{[n-1]}_{ii}) = q^{\sum_{k=i-1}^{n-2}\mu_k - \sum_{k=i-1}^{n-3}\lambda_k}, \quad 1 \le i \le n-2.
\]
Since $I_{n-1} - B^{[n-1]}_0$ is triangular, its $n - 1$ eigenvalues are given by its diagonal elements, namely for $3 \le n$,
\begin{equation}
q^{\sum_{k=i}^{n-2}\mu_k - \sum_{k=i}^{n-3}\lambda_k }, \quad i = 0, \ldots, n-2, \label{eq:evI-B0}  
\end{equation}
where empty sums and empty ranges are taken to be $0$.
Although it is difficult to determine the eigenvalues of $I_{n-1} - B^{[n-1]}_0 - B^{[n-1]}_1$ by direct computation, calculating in specific examples suggests that they are given by 
\begin{align}
& q^{\sum_{k=i}^{n-2}\mu_k - \sum_{k=i-1}^{n-3}\lambda_k }, \quad i = 1, \ldots, n-2,  \label{eq:evI-B0-B1} \\
& q^{\sum_{k=0}^{n-2}\mu_k - \sum_{k=0}^{n-3}\lambda_k }\beta/\alpha \nonumber
\end{align}
for $3 \le n$.
This conjecture is supported by the fact that their sum agrees with $tr(I_{n-1} - B^{[n-1]}_0 - B^{[n-1]}_1)$.
The formal $q$-integral representation of solutions to  equation (\ref{eq:Ygn})
 is given by
\begin{equation} \label{eq:solYg}
\widetilde{V}^{[2(n-1)]}(x)
 = \begin{pmatrix}
   \widetilde{v}_{1}^{[2(n-1)]}(x) \\
   \vdots \\
   \widetilde{v}_{n-1}^{[2(n-1)]}(x) \\
   \widetilde{v}_{n}^{[2(n-1)]}(x) \\
   \vdots \\
   \widetilde{v}_{2n-2}^{[2(n-1)]}(x) 
 \end{pmatrix}
 = \begin{pmatrix}
  \displaystyle \int_0^{\xi_{n-2} \infty} \frac{K_{\lambda_{n-2}}(x,s_{n-2})}{s_{n-2}} s_{n-2}^{\mu_{n-2}} g^{[n-1]}_{1} (s_{n-2}) d_q s_{n-2} \\
  \vdots \\
  \displaystyle \int_0^{\xi_{n-2} \infty} \frac{K_{\lambda_{n-2}}(x,s_{n-2})}{s_{n-2}} s_{n-2}^{\mu_{n-2}} g^{[n-1]}_{n-1} (s_{n-2}) d_q s_{n-2} \\
  \displaystyle \int_0^{\xi_{n-2} \infty} \frac{K_{\lambda_{n-2}}(x,s_{n-2})}{s_{n-2} - 1/\alpha} s_{n-2}^{\mu_{n-2}} g^{[n-1]}_{1} (s_{n-2}) d_q s_{n-2} \\
  \vdots \\
  \displaystyle \int_0^{\xi_{n-2} \infty} \frac{K_{\lambda_{n-2}}(x,s_{n-2})}{s_{n-2} - 1/\alpha} s_{n-2}^{\mu_{n-2}} g^{[n-1]}_{n-1} (s_{n-2}) d_q s_{n-2} 
  \end{pmatrix} .
\end{equation}
By reformulating Theorem \ref{thm:suffcondconv} in terms of equations \eqref{eq:Ygn} and \eqref{eq:solYg}, we obtain the following corollary.
\begin{cor} \label{prop:convn}
Assume that the integrands in equation \eqref{eq:solYg} are well-defined about $s_i = q^{m_i}\xi_i$ $(i = 0, 1, \ldots, n-2)$
for any $m_i \in \mathbb{Z}$.
If for every $j = 0, 1, \ldots, n-2$ $(n \in \mathbb{N}_{\ge 3})$ the absolute value of any eigenvalue of the matrix $I_{j+1} - B^{[j+1]}_0$ is strictly less than $1$, and the absolute value of any eigenvalue of the matrix $I_{j+1} - B^{[j+1]}_0 -B^{[j+1]}_1$ is strictly more than $|q^{\lambda_j}|$,
then the function $\widetilde{V}^{[2(n-1)]}(x)$ in equation \eqref{eq:solYg} converges and satisfies equation \eqref{eq:Ygn}.
\end{cor}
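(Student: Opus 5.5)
The corollary will follow by applying Theorem \ref{thm:suffcondconv} once at each stage of the construction, by induction on $n$. The $(n-1)$-fold $q$-integral in \eqref{eq:solYg} is built level by level. At level $j$ ($j=0,1,\dots,n-2$) the input is the system with coefficient pair $(B^{[j+1]}_0,B^{[j+1]}_1)$. For $j=0$ this is the scalar pair $(B^{[1]}_0,B^{[1]}_1)$ of \eqref{eq:qeab} with solution $y$. For $j\ge 1$ it is $add_{\mu_j}(\overline{G}^{[j+1]}_0,\overline{G}^{[j+1]}_1)$, with solution $s_j^{\mu_j}\,{}^t(g^{[j+1]}_1,\dots,g^{[j+1]}_{j+1})(s_j)$. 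The output of level $j$ is the $q$-convolution with parameter $\lambda_j$.

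The inductive statement for level $j$ is that the $(j+1)$-fold integral $\widetilde V^{[2(j+1)]}$ converges and satisfies the system with $(G^{[2(j+1)]}_0,G^{[2(j+1)]}_1)$. For $j=0$ this is exactly Theorem \ref{thm:suffcondconv} applied to \eqref{eq:qeab}, since the hypotheses on $I_1-B^{[1]}_0$ and $I_1-B^{[1]}_0-B^{[1]}_1$ are precisely \eqref{eq:I1B0} and \eqref{eq:I1B0B1}. For $j=1$ the same argument gives Proposition \ref{prop:ghge3conv}.

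For the inductive step, suppose $\widetilde V^{[2j]}$ is a genuine solution of the $(G^{[2j]}_0,G^{[2j]}_1)$ system. Following the proof of Proposition \ref{prop:Gb0nGb1n}, I would set $(g_1,\dots,g_{2j})^t=(P^{[2j]})^{-1}\widetilde V^{[2j]}$. The block upper-triangular structure of the conjugated matrices $(P^{[2j]})^{-1}G^{[2j]}_kP^{[2j]}$ (the upper-left $(j+1)\times(j+1)$ block is invariant for the quotient action, as displayed there) shows that the first $j+1$ components satisfy the system with $(\overline G^{[j+1]}_0,\overline G^{[j+1]}_1)$. One must check that the quotient components are the first block, i.e.\ that the complement of $\mathcal K$ is spanned by the first $j+1$ coordinates after conjugation. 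This is visible from the zero columns in the lower blocks of the conjugated $G_0$ and from $g_{02}p_n$ having only its first column nonzero.

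Multiplying by $x^{\mu_j}$ then gives a solution of the $(B^{[j+1]}_0,B^{[j+1]}_1)$ system, by the computation defining $add_{\mu_j}$. Each component of it is a finite linear combination of the convergent integrals $\widetilde v^{[2j]}_i$. Theorem \ref{thm:suffcondconv} applied with $\lambda=\lambda_j$ and $\xi=\xi_j$, under the assumed eigenvalue bounds for level $j$ and the well-definedness of the integrand at $s_j=q^{m_j}\xi_j$, yields convergence of $\widetilde V^{[2(j+1)]}$ and that it solves the $(G^{[2(j+1)]}_0,G^{[2(j+1)]}_1)$ system. Taking $j=n-2$ gives \eqref{eq:Ygn}.

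The main obstacle is bookkeeping rather than analysis. The statement writes \eqref{eq:solYg} in terms of $g^{[n-1]}_i$, which themselves are linear combinations (via $(P^{[2(n-2)]})^{-1}$) of the lower-level integrals. I must confirm two things: that inserting these expressions does not change the well-definedness hypothesis, since it is a finite linear combination; and that Theorem \ref{thm:suffcondconv} needs only that the input $Y$ solve the system, not any particular integral form. The convergence at each level thus depends only on the previous level's output being an honest (convergent) solution, which the induction supplies.
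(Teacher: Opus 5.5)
Your proposal is correct and is essentially the paper's argument. The paper gets the corollary by iterating Theorem \ref{thm:suffcondconv} level by level: at level $j$ it uses the pair $(B^{[j+1]}_0,B^{[j+1]}_1)$ and parameter $\lambda_j$, passing to the quotient via $(P^{[2j]})^{-1}$ and multiplying by $x^{\mu_j}$, exactly as was done for $n=3$ in Proposition \ref{prop:ghge3conv}. Your quotient step needs one small fix, because ``block upper-triangular'' misdescribes the structure and you checked only $G_0$. What actually closes the system on the first $j+1$ components is that the upper-right $(j+1)\times(j-1)$ block of each conjugated $G^{[2j]}_k$ vanishes, i.e.\ invariance of $\mathcal{K}$. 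For $G^{[2j]}_1$ this is the displayed first row $(h_{n+1},0,\dots,0)$ of $p_n^{-1}g_{12}p_n$ in the proof of Proposition \ref{prop:Gb0nGb1n}.
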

When $j = 0$, the eigenvalues of the matrices $I_1 - B^{[1]}_0$ and $I_1 - B^{[1]}_0 - B^{[1]}_1$ are given by equations \eqref{eq:I1B0} and \eqref{eq:I1B0B1}, respectively.
For $j = 1, \ldots, n-2$, by the same argument as in equation \eqref{eq:evI-B0}, the eigenvalues of the matrix $I_{j+1} - B^{[j+1]}_0$  are given by
\begin{equation}
q^{\sum_{k=i}^{j}\mu_k - \sum_{k=i}^{j-1}\lambda_k}, \quad i = 0, \ldots, j,  \label{eq:evI-B0j}
\end{equation}
where empty sums and empty ranges are understood to be $0$.
The eigenvalues of the matrix $I_{j+1} - B^{[j+1]}_0 - B^{[j+1]}_1$ are conjectured to be given by
\begin{align}
& q^{\sum_{k=i}^{j}\mu_k - \sum_{k=i-1}^{j-1}\lambda_k}, \quad i = 1, \ldots, j, \label{eq:evI-B0-B1j} \\
& q^{\sum_{k=0}^{j}\mu_k - \sum_{k=0}^{j-1}\lambda_k}\beta/\alpha , \nonumber
\end{align}
by the same argument as that for equation \eqref{eq:evI-B0-B1}.
If the conjectured eigenvalues in equation \eqref{eq:evI-B0-B1j} are correct, then the conditions in Corollary \ref{prop:convn} can be expressed explicitly in terms of the parameters.
We assume that the conditions of Corollary \ref{prop:convn} hold and consider the solution to equation (\ref{eq:genqhge-n}).
Let $\widetilde{V}^{[2(n-1)]}(x)$ be a solution to equation (\ref{eq:Ygn}) and  set
\begin{equation}
\begin{pmatrix}
\widetilde{g}_1(x) \\
\vdots \\
\widetilde{g}_n(x) \\
\widetilde{g}_{n+1}(x) \\
\vdots \\
\widetilde{g}_{2n-2}(x) 
\end{pmatrix}
 = (P^{[2(n-1)]})^{-1} \widetilde{V}^{[2(n-1)]}(x)
 = \begin{pmatrix}
    I_{n-1} & O_{n-1} \\
    O_{n-1} & p_{n-1}^{-1}
   \end{pmatrix}
\widetilde{V}^{[2(n-1)]}(x),
\end{equation}
where $P^{[2(n-1)]}$ is the $2(n-1) \times 2(n-1)$ invertible matrix given by
\begin{equation*}
P^{[2(n-1)]}
= \begin{pmatrix}
   I_{n-1} & O_{n-1} \\
   O_{n-1} & p_{n-1}
  \end{pmatrix}, \ 
 p_{n-1} 
= \begin{pmatrix}
    1      & -b^{[n-1]}_2 & -b^{[n-1]}_3  & \dots  & -b^{[n-1]}_{n-2} & -b^{[n-1]}_{n-1} \\
    0      & b^{[n-1]}_1  & 0           & \dots  &    0           & 0 \\
    \vdots & 0          & b^{[n-1]}_1   & \ddots &                & \vdots \\
    \vdots & \vdots     &  0          & \ddots & \ddots         & \vdots \\
    \vdots & \vdots     &  \vdots     & \ddots & b^{[n-1]}_{1}    & 0 \\
    0      & 0          & 0           & \dots  & 0              & b^{[n-1]}_{1}
\end{pmatrix} .
\end{equation*}
Then ${}^t(\widetilde{g}_1(x) , \ldots, \widetilde{g}_{n-1}(x), \widetilde{g}_n(x))$ 
coincides with ${}^t(g^{[n]}_1(x) , \ldots, g^{[n]}_n(x))$ which satisfies equation (\ref{eq:genqhge-n}).
In particular, $g_1^{[n]}(x) = \widetilde{v}^{[2(n-1)]}_{1}(x)$ is expressed as the following iterated $q$-integral representation:  
\begin{align} \label{eq:g1n-2sol}
g_1^{[n]}(x) 
&= \widetilde{v}^{[2(n-1)]}_{1}(x) \\
&= \int^{\xi_{n-2} \infty}_{0} \frac{K_{\lambda_{n-2}}(x, s_{n-2})}{s_{n-2}} s_{n-2}^{\mu_{n-2}}
  \int^{\xi_{n-3} \infty}_{0} \frac{K_{\lambda_{n-3}}(s_{n-2}, s_{n-3})}{s_{n-3}} s_{n-3}^{\mu_{n-3}} \nonumber \\
&\quad \dots
\int^{\xi_{2} \infty}_{0} \frac{K_{\lambda_{2}}(s_{3}, s_{2})}{s_{2}} s_{2}^{\mu_{2}}
\int^{\xi_{1} \infty}_{0} \frac{K_{\lambda_{1}}(s_{2}, s_{1})}{s_{1}} s_{1}^{\mu_{1}} \nonumber \\
&\qquad \cdot \int^{\xi_{0} \infty}_{0} \frac{K_{\lambda_{0}}(s_{1}, s_{0})}{s_{0}} y(s_0)\, d_qs_0 \, d_qs_1 \, d_qs_2 \, \dots d_qs_{n-3} \, d_qs_{n-2}. \nonumber
\end{align}
As demonstrated for $n = 3$ in Section \ref{sec:qmcex}, by choosing the functions $K_{\lambda_j}$ $(j = 0, 1, \ldots, n-2)$ and $y$ appropriately and by suitably specializing the parameters $\xi_j$ in equation \eqref{eq:g1n-2sol}, we can transform the integral representation \eqref{eq:g1n-2sol} into a $q$-hypergeometric series ${}_n \phi_{n-1}$.
The following four cases lead to $q$-hypergeometric series representations.
Note that there are two choices for $K_{\lambda_j}$, namely,
\begin{equation*}
K^{(1)}_{\lambda_j}(x, s) =  x^{-\lambda_j} \frac{(q^{\lambda_j + 1}s/x;q)_{\infty}}{(qs/x;q)_{\infty}} , \quad K^{(2)}_{\lambda_j}(x, s) =  s^{-\lambda_j} \frac{(x/s;q)_{\infty}}{(q^{-\lambda_j}x/s;q)_{\infty}} .
\end{equation*}
\begin{enumerate}[label=Case \arabic*]
\item \label{case:1} : For $j = 0, 1, \ldots, n-2$,
\begin{equation} \label{eq:Klyxi-1} 
K_{\lambda_j} = K^{(1)}_{\lambda_j}, \quad
y(s_0) = s_0^{\mu_0} \frac{(\alpha s_0; q)_{\infty}}{(\beta s_0; q)_{\infty}}, \quad
\xi_j = q^{- \sum_{\ell = j}^{n-2}\lambda_{\ell}} x, \quad
\lambda_{j} \not \in \mathbb{Z} .
\end{equation}
\item \label{case:2} : For $j = 0, 1, \ldots, n-2$ and $r = 0, 1, \ldots, n-2$,
\begin{align}
& K_{\lambda_j} = 
\begin{cases}
K_{\lambda_j}^{(2)}, & j=r, \\
K_{\lambda_j}^{(1)}, & j \neq r,
\end{cases} \quad
y(s_0) = s_0^{\widetilde{\mu}} \frac{(q/(\beta s_0);q)_{\infty}}{(q/(\alpha s_0);q)_{\infty}} \quad (q^{\widetilde{\mu}} = q^{\mu_0}\beta/\alpha), \label{eq:Klyxi-2} \\
& \xi_0 = 1/\beta, \quad 
\xi_j =
\begin{cases}
q^{\sum_{\ell = 0}^{j-1} \lambda_{\ell}} /\beta, & 1 \le j \le r, \\
q^{-\sum_{\ell = j}^{n-2} \lambda_{\ell}}x, & r+1 \le j \le n-2, 
\end{cases} \nonumber \\
& \lambda_{j} \not \in \mathbb{Z} \quad (j \neq r). \nonumber
\end{align}
\item \label{case:3}: For $j = 0, 1, \ldots, n-2$,
\begin{align} 
& K_{\lambda_j} = K^{(2)}_{\lambda_j}, \quad
y(s_0) = s_0^{\widetilde{\mu}} \frac{(q/(\beta s_0); q)_{\infty}}{(q/(\alpha s_0); q)_{\infty}} \quad (q^{\widetilde{\mu}} = q^{\mu_0} \beta/\alpha), \label{eq:Klyxi-3} \\
& \xi_j = x/q^{n-j-1} , \quad
\lambda_{j} \not \in \mathbb{Z} . \nonumber
\end{align}
\item \label{case:4}: For $j = 0, 1, \ldots, n-2$ and $r = 0, 1, \ldots, n-2$,
\begin{align}
& K_{\lambda_j} = 
\begin{cases}
K_{\lambda_j}^{(1)}, & j=r, \\
K_{\lambda_j}^{(2)}, & j \neq r,
\end{cases} \quad
y(s_0) = s_0^{\mu_0} \frac{(\alpha s_0; q)_{\infty}}{(\beta s_0; q)_{\infty}},  \label{eq:Klyxi-4} \\
& \xi_j =
\begin{cases}
q^{j+1}/\alpha, & 0 \le j \le r, \\
x/q^{n-j-1}, & r+1 \le j \le n-2, 
\end{cases} \nonumber \\
& \lambda_{j} \not \in \mathbb{Z} \quad (j \neq r). \nonumber
\end{align}
\end{enumerate}
\begin{prop} \label{prop:qhgssol}
Assume that the conditions on the eigenvalues of matrices $I_{j+1} - B^{[j+1]}_0$ and $I_{j+1} - B^{[j+1]}_0 - B^{[j+1]}_1$ $(j = 0, 1, \ldots, n-2$, $n \in \mathbb{N}_{\ge 3})$ in Corollary \ref{prop:convn} are satisfied. 
Set
\[
L_k = \sum_{i=0}^{k}\lambda_i,\qquad
\widetilde{M}_k=\sum_{i=0}^{k}\mu_i, \qquad 
T_k = \sum_{i = k}^{n-2}\lambda_i ,
\]
with the conventions
\[
L_{-1} = \widetilde{M}_{-1} = 0, \qquad T_{n-1}=0 .
\]
\noindent
(i) If $|q^{ -L_{n-2}} \beta x| < 1 $ holds, then equation \eqref{eq:g1n-2sol} specialized to \ref{case:1} or \ref{case:2} can be transformed as follows: \\
\ref{case:1}:
\begin{align}
g_1^{[n]}(x)
&= (1-q)^{n-1}
q^{-\mu_0 L_{n-2}
-\sum_{j=1}^{n-2}(\mu_j-\lambda_{j-1})T_j } \; x^{\widetilde{M}_{n-2}-L_{n-2}} 
\label{eq:g1-c1} \\
&\cdot \frac{( \overbrace{q, q, \ldots , q}^{n-1}  ;q)_{\infty}}{(q^{-\lambda_0+1},\ldots,q^{-\lambda_{n-2}+1};q)_{\infty}} \;
\frac{(\bar{b}_1,\ldots,\bar{b}_{n-1};q)_{\infty}}{(\bar{a}_1,\ldots,\bar{a}_{n-1};q)_{\infty}} \nonumber \\
&\cdot {}_n\phi_{n-1}
\!\left(
\begin{matrix}
\alpha/\beta,\,
\bar{a}_1,\ldots,\bar{a}_{n-1}\\
\bar{b}_1,\ldots,\bar{b}_{n-1}
\end{matrix}; q,\, q^{-L_{n-2}}\beta x
\right), \nonumber
\end{align}
where
\[
\bar{a}_k=q^{\widetilde{M}_{k-1}-L_{k-2}},\qquad
\bar{b}_k=q^{\widetilde{M}_{k-1}-L_{k-1}+1},
\qquad (k=1,\ldots,n-1).
\]
\ref{case:2}: For $r = 0, 1, \ldots, n-2$, \\
\begin{align}
g_1^{[n]}(x)
&= (1-q)^{n-1} q^{E_r} \;
\beta^{-\widetilde{\mu} - \sum_{j=1}^{r}\mu_j + L_r} \; x^{\sum_{j=r+1}^{n-2}(\mu_j-\lambda_j)} \label{eq:g1-c2} \\
&\quad \cdot
\frac{(\overbrace{q,q,\ldots,q}^{n-1};q)_{\infty}}{(q\beta/\alpha;q)_{\infty} \displaystyle \prod_{\substack{0\le j\le n-2 \\ j\neq r}}(q^{-\lambda_j+1};q)_{\infty}} \;
\frac{(\bar{b}_1,\ldots,\bar{b}_{n-1};q)_{\infty}}
{(\bar{a}_1,\ldots,\bar{a}_{n-1};q)_{\infty}} \nonumber \\
&\quad\cdot
{}_n\phi_{n-1} \left({q^{\lambda_r},\bar{a}_1,\ldots,\bar{a}_{n-1}
\atop \bar{b}_1,\ldots,\bar{b}_{n-1}};q, q^{-L_{n-2}}\beta x \right), \nonumber
\end{align}
where
\begin{align*}
& E_r = \sum_{j=1}^{r} L_{j-1}(\mu_j-\lambda_{j-1}) - \sum_{j=r+1}^{n-2} T_j(\mu_j-\lambda_{j-1}) - \lambda_r(L_{r-1} + T_{r+1}) , \\
& \bar{a}_1 = q^{-\widetilde{M}_r + L_r} \alpha/\beta, \\
& \bar{a}_k =
\begin{cases}
q^{-\sum_{j=k-1}^{r}\mu_j + \sum_{j=k-2}^{r}\lambda_j}, \quad 2\le k \le r+1, \\
q^{\sum_{j=r+1}^{k-1}\mu_j - \sum_{j=r+1}^{k-2}\lambda_j}, \quad r+2\le k \le n-1,
\end{cases} \\
& \bar{b}_k =
\begin{cases}
q^{\sum_{j=k-1}^{r}(-\mu_j + \lambda_j) + 1}, \quad 1\le k \le r+1, \\
q^{\sum_{j=r+1}^{k-1}(\mu_j - \lambda_j) + 1}, \quad r+2\le k \le n-1. 
\end{cases}
\end{align*}
Empty sums and empty ranges are understood to be $0$ and omitted.\\
(ii) If $|q^{n}/( \alpha x)| <1 $ holds, then equation \eqref{eq:g1n-2sol} specialized to \ref{case:3} or \ref{case:4} can be transformed as follows: \\
\ref{case:3}:
\begin{align}
g_1^{[n]}(x)
&=(1-q)^{n-1} q^{\sum_{j=0}^{n-2}(n-1-j)(-\mu_j + \lambda_j)}
\left(\frac{\alpha}{\beta}\right)^{n-1} x^{\widetilde{\mu} + \sum_{j=1}^{n-2}\mu_j - L_{n-2} } \label{eq:g1-c3} \\
&\quad\cdot \frac{(\overbrace{q, q, \ldots , q}^{n-1} ;q)_{\infty}}{(q^{-\lambda_{0}+1}, \ldots, q^{-\lambda_{n-2}+1};q)_{\infty}} \; 
\frac{(\bar{b}_1, \ldots , \bar{b}_{n-1} ;q)_{\infty}}{(\bar{a}_1, \ldots , \bar{a}_{n-1};q)_{\infty}} \nonumber \\
&\quad\cdot
{}_n\phi_{n-1}
\left(
{\alpha/\beta,\bar{a}_1,\ldots,\bar{a}_{n-1}
\atop \bar{b}_1,\ldots,\bar{b}_{n-1}}; q, q^n/(\alpha x)
\right), \nonumber
\end{align}
where
\begin{align*}
& \bar{a}_k = q^{-\widetilde{M}_{k-1} + L_{k-1}} \alpha/\beta, \quad 1\le k \le n-1, \\
& \bar{b}_k = q^{-\widetilde{M}_{k-1} + L_{k-2} + 1} \alpha/\beta, \quad 1 \le k \le n-1.
\end{align*}
\ref{case:4}: For $r = 0, 1, \ldots, n-2$,
\begin{align}
g_1^{[n]}(x)
&=(1-q)^{n-1} q^{F_r} \;
\alpha^{-\widetilde{M}_{r} + L_{r-1}} \;
x^{\sum_{j=r+1}^{n-2}\mu_j - T_r }  \label{eq:g1-c4} \\
&\quad\cdot
\frac{(\overbrace{q, q, \ldots , q}^{n-1} ;q)_{\infty} }
{(q\beta/\alpha;q)_{\infty} \displaystyle \prod_{\substack{0\le j\le n-2 \\ j\neq r}}(q^{-\lambda_j+1};q)_{\infty}} \; 
\frac{(\bar{b}_1, \ldots , \bar{b}_{n-1} ;q)_{\infty}}{(\bar{a}_1, \ldots , \bar{a}_{n-1};q)_{\infty}} \nonumber \\
&\quad\cdot
{}_n\phi_{n-1}
\left(
{q^{\lambda_r},\bar a_1,\ldots,\bar a_{n-1}
\atop
\bar b_1,\ldots,\bar b_{n-1}}; q, q^n/(\alpha x)
\right), \nonumber
\end{align}
where
\begin{align*}
F_r
&= \sum_{j=0}^{r}(j+1)\mu_j - \sum_{j=0}^{r-1}(j+1)\lambda_j - \sum_{j=r+1}^{n-2}(n-1-j)\mu_j + \sum_{j=r}^{n-2}(n-1-j)\lambda_j - \lambda_r,
\end{align*}
\begin{align*}
&\bar{a}_k=
\begin{cases}
q^{\sum_{j=k-1}^{r}\mu_j - \sum_{j=k-1}^{r-1}\lambda_j}, &1\le k \le r+1,\\
q^{-\sum_{j=r+1}^{k-1}\mu_j + \sum_{j=r}^{k-1}\lambda_j}, &r+2\le k \le n-1,
\end{cases} \\
&\bar{b}_1
= q^{\widetilde{M}_r - L_{r-1} + 1} \beta/\alpha, \\
&\bar{b}_k=
\begin{cases}
q^{\sum_{j=k-1}^{r}\mu_j - \sum_{j=k-2}^{r-1}\lambda_j + 1}, &2\le k \le r+1,\\
q^{-\sum_{j=r+1}^{k-1}\mu_j + \sum_{j=r}^{k-2}\lambda_j + 1}, &r+2\le k \le n-1.
\end{cases}
\end{align*}
Empty sums and empty ranges are understood to be $0$ and omitted.
\end{prop}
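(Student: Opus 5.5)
The plan is to generalize the proof of the ${}_3\phi_2$ formula in Section~\ref{sec:qmcex} to the $(n-1)$-fold integral \eqref{eq:g1n-2sol}, treating each of the four cases separately but with one common template. Convergence and the validity of the representation are supplied by Corollary~\ref{prop:convn}, so only the series manipulation remains.

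First, substitute the chosen kernels $K^{(1)}_{\lambda_j}$ or $K^{(2)}_{\lambda_j}$ and the chosen $y(s_0)$ into \eqref{eq:g1n-2sol}. Writing every Jackson integral as $(1-q)\sum_{m_j\in\mathbb Z} q^{m_j}\xi_j(\cdots)$ turns $g_1^{[n]}(x)$ into an $(n-1)$-fold bilateral sum. Each summand is a product of
\begin{itemize}
\item powers $(q^{m_j}\xi_j)^{\mu_j-\lambda_{j-1}}$ (or the corresponding powers for $K^{(2)}$),
\item ratios of the form $(q^{\lambda_j+1}s_j/s_{j+1};q)_\infty/(qs_j/s_{j+1};q)_\infty$ (or the $K^{(2)}$ analogues),
\item the factor coming from $y$.
\end{itemize}

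Next, insert the prescribed $\xi_j$. The point of these choices is that each ratio $\xi_j/\xi_{j+1}$ becomes $q^{\lambda_j}$ times a power of $q$, or $\xi_0$ hits $1/\beta$ or $q/\alpha$, and so on. Consequently each numerator Pochhammer factor has the form $(q^{M_j+1};q)_\infty$, where $M_j$ is a difference of consecutive indices; in Case~1, for instance, $M_j=m_j-m_{j+1}$ and $M_{n-2}=m_{n-2}$.

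Change variables to these $M_j$. The identity $(q^{\ell+1};q)_\infty=0$ for $\ell\in\mathbb Z_{<0}$ then truncates every sum to $M_j\ge 0$. The hypothesis $\lambda_j\notin\mathbb Z$ guarantees that the denominators $(q^{-\lambda_j+M_j+1};q)_\infty$ never vanish. After this step, exactly one factor still couples all the indices: the $y$-factor (through $K^{(2)}$ in Cases~2 and~4).

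Expand that coupling factor by the $q$-binomial theorem. In Case~1, for example,
\[
\frac{(q^{-L_{n-2}+\sum M}\alpha x;q)_\infty}{(q^{-L_{n-2}+\sum M}\beta x;q)_\infty}=\sum_N\frac{(\alpha/\beta;q)_N}{(q;q)_N}\,(q^{-L_{n-2}+\sum M}\beta x)^N .
\]
This is valid under $|q^{-L_{n-2}}\beta x|<1$, and under $|q^n/(\alpha x)|<1$ in Cases~3 and~4. In Cases~2 and~4 the kernel $K^{(2)}_{\lambda_r}$ supplies the numerator parameter $q^{\lambda_r}$ in place of $\alpha/\beta$.

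After the expansion the sums over $M_0,\dots,M_{n-2}$ factor completely. Each becomes $\sum_{M}\frac{(q^{-\lambda_j+1};q)_M}{(q;q)_M}z_j^M$ with $z_j=q^{(\text{partial sum of }\mu-\lambda)+N}$, and the $q$-binomial theorem evaluates it as $(q^{-\lambda_j+1}z_j;q)_\infty/(z_j;q)_\infty$. Its convergence, $|z_j|<1$, is exactly the eigenvalue condition $|\cdot|<1$ on $I_{j+1}-B^{[j+1]}_0$ from \eqref{eq:evI-B0j}. Finally, rewrite $(cq^N;q)_\infty=(c;q)_\infty/(c;q)_N$; this produces the prefactor $(\bar b_k;q)_\infty/(\bar a_k;q)_\infty$ and the ${}_n\phi_{n-1}$ with parameters $\bar a_k,\bar b_k$.

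The main obstacle is bookkeeping rather than any new idea. One has to track the exponents of the triangular substitution $m_j\mapsto M_j$ and collect all the powers of $q$, $x$, $\alpha$, $\beta$ into the stated constants $E_r$, $F_r$ and the exponent of $x$. The hardest instances are the mixed Cases~2 and~4. There the kernel switches type at $j=r$, so the indices below $r$ are tied to $1/\beta$ or $q/\alpha$ and those above $r$ are tied to $x$, and the change of variables must be split at $r$ (for example $M_j=m_{j}-m_{j-1}$ for $j\le r$ and $M_j=m_j-m_{j+1}$ for $j>r$). To organize this, I would first verify the $n=3$ instances against Appendix~\ref{subsec:qhgs3}, and then carry out the induction on the number of integrations: at each step the inner $(k)$-fold sum is already in the product form above, and only its exponents are updated.
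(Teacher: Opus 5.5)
Your outline is correct and follows the paper's proof. The paper likewise expands the iterated Jackson integral into an $(n-1)$-fold bilateral sum, inserts the specialized $\xi_j$, and passes to difference indices (split at $r$ in the mixed cases). It then truncates via $(q^{\ell+1};q)_\infty=0$, expands the single coupling factor by the $q$-binomial theorem, and resums the factorized inner sums by the $q$-binomial theorem again; it does this directly for general $n$, without induction.

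Two details need correcting.
\begin{itemize}
\item In the mixed cases the coupling factor comes from the distinguished kernel at index $r$. In Case~4 this kernel is $K^{(1)}_{\lambda_r}$, not $K^{(2)}_{\lambda_r}$. Meanwhile $y$ produces a factored sum with $(q\beta/\alpha;q)_{M_0}$ in place of $(q^{-\lambda_r+1};q)_{M}$.
\item Only in Case~1 are all the conditions $|z_j|<1$ the eigenvalue conditions on $I_{j+1}-B^{[j+1]}_0$. In Cases~2--4 several of them are instead the (conjectured) conditions that eigenvalues of $I_{j+1}-B^{[j+1]}_0-B^{[j+1]}_1$ exceed $|q^{\lambda_j}|$ in modulus. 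For example, Case~3 needs $|q^{-\widetilde{M}_j+L_j}\alpha/\beta|<1$.
\end{itemize}
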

\begin{proof}
We give the proof for \ref{case:2}, as the computation in this case is particularly involved. 
From equations \eqref{eq:g1n-2sol} and \eqref{eq:Klyxi-2}, we obtain 
\begin{align} 
g_1^{[n]}(x)
&= \int^{\xi_{n-2} \infty}_{0} \frac{K^{(1)}_{\lambda_{n-2}}(x, s_{n-2})}{s_{n-2}} s_{n-2}^{\mu_{n-2}}
  \int^{\xi_{n-3} \infty}_{0} \frac{K^{(1)}_{\lambda_{n-3}}(s_{n-2}, s_{n-3})}{s_{n-3}} s_{n-3}^{\mu_{n-3}} \label{eq:g1-c2Ks} \\
&\quad \cdots s_{r+2}^{\mu_{r+2}} \int^{\xi_{r+1} \infty}_{0} \frac{K^{(1)}_{\lambda_{r+1}}(s_{r+2}, s_{r+1})}{s_{r+1}}
s_{r+1}^{\mu_{r+1}}\int^{\xi_{r} \infty}_{0} \frac{K^{(2)}_{\lambda_{r}}(s_{r+1}, s_{r})}{s_{r}}
s_{r}^{\mu_{r}} \nonumber \\
&\qquad \cdot \int^{\xi_{r-1} \infty}_{0} \frac{K^{(1)}_{\lambda_{r-1}}(s_{r}, s_{r-1})}{s_{r-1}} \cdots
s_{3}^{\mu_{3}} \int^{\xi_{2} \infty}_{0} \frac{K^{(1)}_{\lambda_{2}}(s_{3}, s_{2})}{s_{2}} s_{2}^{\mu_{2}}
\int^{\xi_{1} \infty}_{0} \frac{K^{(1)}_{\lambda_{1}}(s_{2}, s_{1})}{s_{1}} s_{1}^{\mu_{1}} \nonumber \\
&\qquad \cdot \int^{\xi_{0} \infty}_{0} \frac{K^{(1)}_{\lambda_{0}}(s_{1}, s_{0})}{s_{0}} s_0^{\widetilde{\mu}} \frac{(q/(\beta s_0); q)_{\infty}}{(q/(\alpha s_0); q)_{\infty}}\, d_qs_0 \; d_qs_1 \cdots d_qs_r \cdots d_qs_{n-3} \, d_qs_{n-2}. \nonumber
\end{align}
Note that, when $r = 0$,
\begin{align*} 
g_1^{[n]}(x)
&= \int^{\xi_{n-2} \infty}_{0} \frac{K^{(1)}_{\lambda_{n-2}}(x, s_{n-2})}{s_{n-2}} s_{n-2}^{\mu_{n-2}}
  \int^{\xi_{n-3} \infty}_{0} \frac{K^{(1)}_{\lambda_{n-3}}(s_{n-2}, s_{n-3})}{s_{n-3}} s_{n-3}^{\mu_{n-3}}  \\
&\quad \cdots s_{3}^{\mu_{3}} 
\int^{\xi_{2} \infty}_{0} \frac{K^{(1)}_{\lambda_{2}}(s_{3}, s_{2})}{s_{2}} s_{2}^{\mu_{2}}
\int^{\xi_{1} \infty}_{0} \frac{K^{(1)}_{\lambda_{1}}(s_{2}, s_{1})}{s_{1}} s_{1}^{\mu_{1}} \\
&\qquad \cdot \int^{\xi_{0} \infty}_{0} \frac{K^{(2)}_{\lambda_{0}}(s_{1}, s_{0})}{s_{0}} s_0^{\widetilde{\mu}} \frac{(q/(\beta s_0); q)_{\infty}}{(q/(\alpha s_0); q)_{\infty}}\, d_qs_0 \, d_qs_1 \cdots d_qs_{n-3} \, d_qs_{n-2},
\end{align*}
whereas, when $r = n-2$,
\begin{align*} 
g_1^{[n]}(x)
&= \int^{\xi_{n-2} \infty}_{0} \frac{K^{(2)}_{\lambda_{n-2}}(x, s_{n-2})}{s_{n-2}} s_{n-2}^{\mu_{n-2}}
  \int^{\xi_{n-3} \infty}_{0} \frac{K^{(1)}_{\lambda_{n-3}}(s_{n-2}, s_{n-3})}{s_{n-3}} s_{n-3}^{\mu_{n-3}}  \\
&\quad \cdots s_{3}^{\mu_{3}}
\int^{\xi_{2} \infty}_{0} \frac{K^{(1)}_{\lambda_{2}}(s_{3}, s_{2})}{s_{2}} s_{2}^{\mu_{2}}
\int^{\xi_{1} \infty}_{0} \frac{K^{(1)}_{\lambda_{1}}(s_{2}, s_{1})}{s_{1}} s_{1}^{\mu_{1}} \nonumber \\
&\qquad \cdot \int^{\xi_{0} \infty}_{0} \frac{K^{(1)}_{\lambda_{0}}(s_{1}, s_{0})}{s_{0}} s_0^{\widetilde{\mu}} \frac{(q/(\beta s_0); q)_{\infty}}{(q/(\alpha s_0); q)_{\infty}}\, d_qs_0 \, d_qs_1 \cdots d_qs_{n-3} \, d_qs_{n-2}. \nonumber
\end{align*}
For $n \ge 4$, we now consider the case $1 \le r \le n-3$. 
Equation \eqref{eq:g1-c2Ks} can be written as follows:
\begin{align}
&g_1^{[n]}(x) 
= x^{-\lambda_{n-2}} \int_0^{\xi_{n-2} \infty} s_{n-2}^{-1} \frac{(q^{\lambda_{n-2} + 1}s_{n-2}/x;q)_{\infty}}{(qs_{n-2}/x;q)_{\infty}} s_{n-2}^{\mu_{n-2}} \cdot  s_{n-2}^{-\lambda_{n-3}}   \label{eq:gn1-p1}\\
& \quad \cdot \int_0^{\xi_{n-3} \infty} s_{n-3}^{-1} \frac{(q^{\lambda_{n-3} + 1}s_{n-3}/s_{n-2} ;q)_{\infty}}{(qs_{n-3}/s_{n-2}  ;q)_{\infty}} s_{n-3}^{\mu_{n-3}} \cdot s_{n-3}^{-\lambda_{n-4}} \nonumber \\
& \cdots
s_{r+2}^{\mu_{r+2}} \cdot s_{r+2}^{-\lambda_{r+1}}
\int_0^{\xi_{r+1} \infty} s_{r+1}^{-1} \frac{(q^{\lambda_{r+1} + 1}s_{r+1}/s_{r+2} ;q)_{\infty}}{(qs_{r+1}/s_{r+2} ;q)_{\infty}} s_{r+1}^{\mu_{r+1}} \nonumber \\ 
& \cdot \int_0^{\xi_{r} \infty} s_{r}^{- \lambda_{r} -1} \frac{(s_{r+1}/s_{r}; q)_{\infty}}{(q^{-\lambda_{r}} s_{r+1}/s_{r}; q)_{\infty}} 
s_{r}^{\mu_{r}} \cdot s_{r}^{-\lambda_{r-1}} \int_0^{\xi_{r-1} \infty} s_{r-1}^{-1} \frac{(q^{\lambda_{r-1} + 1}s_{r-1}/s_{r} ;q)_{\infty}}{(qs_{r-1}/s_{r}  ;q)_{\infty}} s_{r-1}^{\mu_{r-1}} \cdot s_{r-1}^{-\lambda_{r-2}} \nonumber \\
& \cdots
s_3^{\mu_3} \cdot s_3^{-\lambda_2}  \int_0^{\xi_{2} \infty} s_2^{-1} \frac{(q^{\lambda_2 + 1}s_2/s_3 ;q)_{\infty}}{(q s_2/s_3 ;q)_{\infty}} s_2^{\mu_2} \cdot s_2^{-\lambda_1}  \int_0^{\xi_{1} \infty} s_1^{-1} \frac{(q^{\lambda_1 + 1}s_1/s_2 ;q)_{\infty}}{(q s_1/s_2 ;q)_{\infty}} s_1^{\mu_1} \cdot s_1^{-\lambda_0} \nonumber \\ 
& \cdot \int_0^{\xi_{0} \infty} s_0^{\widetilde{\mu} - 1} \frac{(q^{\lambda_0 + 1}s_0/s_1, q/(\beta s_0) ;q)_{\infty}}{(q s_0/s_1, q/(\alpha s_0) ;q)_{\infty}} \; d_q s_0 d_q s_1 \cdots d_q s_r \cdots d_q s_{n-3} d_q s_{n-2} \nonumber .
\end{align}
Equation \eqref{eq:gn1-p1} can be rewritten further as follows: 
\begin{align}
&g_1^{[n]}(x) 
= (1 - q)^{n-1} \label{eq:gn1-p2} \\
&\cdot \xi_0^{\widetilde{\mu}} \xi_1^{\mu_1 - \lambda_0} \cdots \xi_{r-1}^{\mu_{r-1} - \lambda_{r-2}} \xi_{r}^{\mu_{r} - (\lambda_{r-1} + \lambda_{r})} \xi_{r+1}^{\mu_{r+1}} \xi_{r+2}^{\mu_{r+2} - \lambda_{r+1}} \cdots \xi_{n-3}^{\mu_{n-3} - \lambda_{n-4}} \xi_{n-2}^{\mu_{n-2} - \lambda_{n-3}} x^{-\lambda_{n-2}} \nonumber \\
&\cdot  
\sum_{m_{n-2} = -\infty}^{\infty} \sum_{m_{n-3} = -\infty}^{\infty} \cdots \sum_{m_{r+1} = -\infty}^{\infty} \sum_{m_{r} = -\infty}^{\infty} \sum_{m_{r-1} = -\infty}^{\infty} \cdots \sum_{m_{1} = -\infty}^{\infty} \sum_{m_{0} = -\infty}^{\infty} \nonumber \\
& \cdot (q^{\mu_{n-2} - \lambda_{n-3}})^{m_{n-2}} (q^{\mu_{n-3} - \lambda_{n-4}})^{m_{n-3}}  \cdots (q^{\mu_{r+2} - \lambda_{r+1}})^{m_{r+2}} (q^{\mu_{r+1}})^{m_{r+1}} \nonumber \\
&\cdot (q^{\mu_{r} - (\lambda_{r-1} + \lambda_{r})})^{m_{r}} (q^{\mu_{r-1} - \lambda_{r-2}})^{m_{r-1}} \cdots
(q^{\mu_{2} - \lambda_{1}})^{m_{2}} (q^{\mu_{1} - \lambda_{0}})^{m_{1}} (q^{\widetilde{\mu}})^{m_0} \nonumber \\ 
& \cdot \frac{(q^{\lambda_{n-2} + m_{n-2} + 1}\xi_{n-2}/x, q^{\lambda_{n-3} + m_{n-3} - m_{n-2} + 1}\xi_{n-3}/\xi_{n-2}, \ldots,  
q^{\lambda_{r+1} + m_{r+1} - m_{r+2} + 1}\xi_{r+1}/\xi_{r+2}; q)_{\infty}}
{(q^{m_{n-2} + 1}\xi_{n-2}/x, q^{m_{n-3} - m_{n-2} + 1}\xi_{n-3}/\xi_{n-2}, \ldots , q^{m_{r+1} - m_{r+2} + 1}\xi_{r+1}/\xi_{r+2}; q)_{\infty}} \nonumber \\
& \cdot \frac{(q^{m_{r+1} - m_{r}}\xi_{r+1}/\xi_{r}; q)_{\infty}}{(q^{-\lambda_{r} + m_{r+1} - m_{r}}\xi_{r+1}/\xi_{r}; q)_{\infty}} \nonumber \\
& \cdot \frac{(q^{\lambda_{r-1} + m_{r-1} - m_{r} + 1}\xi_{r-1}/\xi_{r}, \ldots , q^{\lambda_{1} + m_{1} - m_{2} + 1}\xi_{1}/\xi_{2}, q^{\lambda_{0} + m_{0} - m_{1} + 1}\xi_{0}/\xi_{1}, q^{1 - m_0}/(\beta\xi_0) ; q)_{\infty}}{(q^{m_{r-1} - m_{r} + 1}\xi_{r-1}/\xi_{r}, \ldots , q^{m_{1} - m_{2} + 1}\xi_{1}/\xi_{2}, q^{m_{0} - m_{1} + 1}\xi_{0}/\xi_{1}, q^{1 - m_0}/(\alpha\xi_0); q)_{\infty}}   \nonumber 
\end{align}
Set in equation \eqref{eq:gn1-p2}
\[
  \xi_0 = 1/\beta, \quad \xi_j =
\begin{cases}
q^{L_{j-1}} /\beta, & 1 \le j \le r, \\
q^{-T_j}x, & r+1 \le j \le n-2 .
\end{cases}
\]
Then we have
\begin{align}
&g_1^{[n]}(x) \label{eq:gn1-12} \\
&= (1 - q)^{n-1} \beta^{-\widetilde{\mu}} 
(q^{\lambda_0}/\beta)^{\mu_1-\lambda_0} (q^{\lambda_0 + \lambda_1}/\beta)^{\mu_2-\lambda_1} \cdots (q^{\lambda_0 + \cdots + \lambda_{r-2}}/\beta)^{\mu_{r-1}-\lambda_{r-2}} \nonumber \\
&\cdot (q^{\lambda_0 + \cdots + \lambda_{r-1}}/\beta)^{\mu_{r}-(\lambda_{r-1} + \lambda_{r})} (q^{ -(\lambda_{r+1} + \cdots + \lambda_{n-2}) } x)^{\mu_{r+1}}  \nonumber \\
&\cdot (q^{ -(\lambda_{r+2} + \cdots + \lambda_{n-2}) } x)^{\mu_{r+2}-\lambda_{r+1}} \cdots (q^{ -(\lambda_{n-3} + \lambda_{n-2}) } x)^{\mu_{n-3}-\lambda_{n-4}} (q^{ -\lambda_{n-2}} x)^{\mu_{n-2}-\lambda_{n-3}}
x^{-\lambda_{n-2}}  \nonumber \\
&\cdot
\sum_{m_{n-2} = -\infty}^{\infty} \sum_{m_{n-3} = -\infty}^{\infty} \cdots \sum_{m_{r+1} = -\infty}^{\infty} \sum_{m_{r} = -\infty}^{\infty} \sum_{m_{r-1} = -\infty}^{\infty} \cdots \sum_{m_{1} = -\infty}^{\infty} \sum_{m_{0} = -\infty}^{\infty} \nonumber \\
& \cdot (q^{\mu_{n-2} - \lambda_{n-3}})^{m_{n-2}} (q^{\mu_{n-3} - \lambda_{n-4}})^{m_{n-3}}  \cdots (q^{\mu_{r+2} - \lambda_{r+1}})^{m_{r+2}} (q^{\mu_{r+1}})^{m_{r+1}} \nonumber \\
&\cdot (q^{\mu_{r} - (\lambda_{r-1} + \lambda_{r})})^{m_{r}} (q^{\mu_{r-1} - \lambda_{r-2}})^{m_{r-1}} \cdots
(q^{\mu_{2} - \lambda_{1}})^{m_{2}} (q^{\mu_{1} - \lambda_{0}})^{m_{1}} (q^{\widetilde{\mu}})^{m_0} \nonumber \\ 
& \cdot \frac{(q^{ m_{n-2} + 1}, q^{ m_{n-3} - m_{n-2} + 1}, \ldots,  q^{ m_{r+1} - m_{r+2} + 1}; q)_{\infty}}
{(q^{-\lambda_{n-2} + m_{n-2} + 1}, q^{-\lambda_{n-3} + m_{n-3} - m_{n-2} + 1}, \ldots , q^{-\lambda_{r+1} + m_{r+1} - m_{r+2} + 1}; q)_{\infty}} \nonumber \\
&\cdot \frac{(q^{-L_{n-2} + \lambda_r + m_{r+1} - m_r} \beta x ; q)_{\infty}}{(q^{-L_{n-2} + m_{r+1} - m_r} \beta x ; q)_{\infty}} \nonumber \\
&\cdot \frac{(q^{m_{r-1} - m_{r} + 1}, \ldots ,  q^{m_{1} - m_{2} + 1}, q^{m_{0} - m_{1} + 1}, q^{1 - m_0} ;q)_{\infty}}{(q^{-\lambda_{r-1} + m_{r-1} - m_{r} + 1}, \ldots ,  q^{-\lambda_{1} + m_{1} - m_{2} + 1}, q^{-\lambda_{0} + m_{0} - m_{1} + 1}, q^{1 - m_0}\beta/\alpha;q)_{\infty}} . \nonumber 
\end{align}
We replace the variables $-m_0$ and $m_{n-2}$ with $M_0$ and  $M_{n-2}$, respectively.
Moreover, by setting $m_{i-1} - m_{i} = M_{i}$ for $i= 1, \ldots, r$, the variables $(m_1, \ldots, m_{r})$ are transformed into $(-(M_0 + M_1), \ldots, -(M_0 + \cdots + M_{r}))$, and by setting $m_j - m_{j+1} = M_{j}$ for $j=r+1, \ldots, n-3$, the variables $(m_{r+1}, \ldots, m_{n-3})$ are transformed into $(M_{r+1} + \cdots + M_{n-2}, \ldots, M_{n-3} + M_{n-2})$.
It follows from $(q^{\ell+1};q)_{\infty} = 0$ for $\ell \in \mathbb{Z}_{<0}$ that
\begin{align}
&g_1^{[n]}(x) \label{eq:gn1-13} \\
&= (1 - q)^{n-1} q^{E_r}
\; \beta^{-\widetilde{\mu} - \sum_{j=1}^{r}\mu_j + L_{r}} \; x^{\sum_{j=r+1}^{n-2} (\mu_j - \lambda_j)}  \nonumber \\
&\sum_{M_{n-2} = 0}^{\infty} \sum_{M_{n-3} = 0}^{\infty} \cdots \sum_{M_{r+1} = 0}^{\infty} \sum_{M_{r} = 0}^{\infty} \sum_{M_{r-1} = 0}^{\infty} \cdots \sum_{M_{1} = 0}^{\infty} \sum_{M_{0} = 0}^{\infty} \nonumber \\
&\cdot (q^{\mu_{n-2} - \lambda_{n-3}})^{M_{n-2}} (q^{\mu_{n-3} - \lambda_{n-4}})^{M_{n-3} + M_{n-2}} \cdots (q^{\mu_{r+2} - \lambda_{r+1}})^{M_{r+2} + \cdots + M_{n-2}} \nonumber \\
&\cdot (q^{\mu_{r+1}})^{M_{r+1} + \cdots + M_{n-2}} (q^{\mu_{r} - (\lambda_{r-1}+ \lambda_{r})})^{-(M_0 + \cdots + M_{r})}  \nonumber \\
&\cdot (q^{\mu_{r-1} - \lambda_{r-2}})^{-(M_0 + \cdots + M_{r-1})} \cdots (q^{\mu_1 - \lambda_0})^{-(M_0 + M_1)} (q^{\mu_0}\beta/\alpha)^{-M_0}  \nonumber \\
&\cdot \frac{(q^{M_{n-2}+1}, q^{M_{n-3}+1}, \ldots, q^{M_{r+1}+1};q)_{\infty}}{(q^{-\lambda_{n-2} + M_{n-2}+1}, q^{-\lambda_{n-3} + M_{n-3}+1}, \ldots, q^{-\lambda_{r+1} + M_{r+1}+1};q)_{\infty}} \nonumber \\
&\cdot \frac{(q^{-L_{n-2} + \lambda_r + \sum_{j=0}^{n-2}M_j}\beta x ;q)_{\infty}}{(q^{-L_{n-2} + \sum_{j=0}^{n-2}M_j}\beta x  ;q)_{\infty}} \nonumber \\
&\cdot \frac{(q^{M_{r} + 1}, \ldots, q^{M_1 +1}, q^{M_0 + 1} ;q)_{\infty}}{(q^{-\lambda_{r-1} + M_{r}+ 1}, \ldots , q^{-\lambda_0 + M_1 + 1}, q^{M_0 + 1}\beta/\alpha ;q)_{\infty}} . \nonumber
\end{align}
Note that the summand in equation \eqref{eq:gn1-13} is well defined, if $\lambda_{j} \notin \mathbb{Z}$ for $j = 0, \ldots, r-1, r+1, \ldots, n-2$.
By the $q$-binomial theorem, we have
\begin{equation*}
\frac{(q^{-L_{n-2} + \lambda_r + \sum_{j=0}^{n-2}M_j }\beta x ;q)_{\infty}}{(q^{-L_{n-2} + \sum_{j=0}^{n-2}M_j }\beta x  ;q)_{\infty}} = \sum_{N = 0}^{\infty} \frac{(q^{\lambda_{r}} ;q)_{N}}{(q ;q)_{N}} (q^{-L_{n-2} + \sum_{j=0}^{n-2}M_j } \beta x )^N, 
\end{equation*}
if $|q^{-L_{n-2} }\beta x | < 1$.
Hence, the sixth and subsequent lines of equation \eqref{eq:gn1-13} are given by
\begin{align}
&  \sum_{M_{n-2} = 0}^{\infty} \sum_{M_{n-3} = 0}^{\infty} \cdots \sum_{M_{r+1} = -\infty}^{\infty} \sum_{M_{r} = 0}^{\infty} \sum_{M_{r-1} = -\infty}^{\infty}  \cdots \sum_{M_{1} = 0}^{\infty} \sum_{M_{0} = 0}^{\infty} \sum_{N = 0}^{\infty} \label{eq:gn1-14} \\
&\cdot (q^{\mu_{r+1} + \cdots + \mu_{n-2} - (\lambda_{r+1} + \cdots + \lambda_{n-3}) + N})^{M_{n-2}} (q^{\mu_{r+1} + \cdots + \mu_{n-3} - (\lambda_{r+1} + \cdots + \lambda_{n-4}) + N})^{M_{n-3}} \nonumber \\
&\cdots (q^{\mu_{r+1} + \mu_{r+2} - \lambda_{r+1} + N})^{M_{r+2}} (q^{\mu_{r+1} + N})^{M_{r+1}} (q^{-\mu_{r} + \lambda_{r-1} + \lambda_{r} + N})^{M_{r}} \nonumber \\
&\cdots (q^{-(\mu_1 + \cdots + \mu_{r}) + \lambda_0 + \cdots + \lambda_{r} + N})^{M_1} (q^{-(\mu_0 + \cdots + \mu_{r}) + \lambda_0 + \cdots + \lambda_{r} + N}\alpha/\beta)^{M_0} \nonumber \\
&\cdot \frac{(q^{M_{n-2}+1}, q^{M_{n-3}+1}, \ldots, q^{M_{r+1}+1} ;q)_{\infty}}{(q^{-\lambda_{n-2} + M_{n-2}+1}, q^{-\lambda_{n-3} + M_{n-3}+1}, \ldots, q^{-\lambda_{r+1} + M_{r+1} + 1} ;q)_{\infty}} \nonumber \\
&\cdot \frac{(q^{M_{r} + 1}, \ldots, q^{M_1 +1}, q^{M_0 + 1} ;q)_{\infty}}{(q^{-\lambda_{r-1} + M_{r}+ 1}, \ldots , q^{-\lambda_0 + M_1 + 1}, q^{M_0 + 1}\beta/\alpha ;q)_{\infty}} \nonumber \\
&\cdot \frac{(q^{\lambda_{r}} ;q)_{N}}{(q ;q)_{N}} (q^{-L_{n-2} } \beta x )^N \nonumber \\
&= \frac{(\overbrace{q, q, \ldots , q}^{n-1}   ; q)_{\infty}}{(q\beta/\alpha, q^{-\lambda_0 + 1} , q^{-\lambda_1 + 1} , \ldots , q^{-\lambda_{r-1} + 1}, q^{-\lambda_{r+1} + 1}, \ldots, q^{-\lambda_{n-2} + 1}; q)_{\infty}} \nonumber \\
&\cdot \sum_{N = 0}^{\infty} \frac{(q^{\lambda_{r}} ;q)_{N}}{(q ;q)_{N}} (q^{-L_{n-2} }\beta x )^N \nonumber \\
&\quad \cdot \left( \sum_{M_{0} = 0}^{\infty} \frac{(q\beta/\alpha ;q)_{M_{0}}}{(q ;q)_{M_{0}}} (q^{-(\mu_0 + \cdots + \mu_{r}) + \lambda_0 + \cdots + \lambda_{r} + N}\alpha/\beta)^{M_{0}} \right) \nonumber \\
&\quad \cdot \left( \sum_{M_{1} = 0}^{\infty} \frac{(q^{-\lambda_{0}+1} ;q)_{M_{1}}}{(q ;q)_{M_{1}}} (q^{-(\mu_1 + \cdots + \mu_{r}) + \lambda_0 + \cdots + \lambda_{r} + N})^{M_1} \right) \nonumber \\
&\quad \cdots \left( \sum_{M_{r} = 0}^{\infty} \frac{(q^{-\lambda_{r-1}+1} ;q)_{M_{r}}}{(q ;q)_{M_{r}}} (q^{-\mu_{r} + \lambda_{r-1} + \lambda_{r} + N})^{M_{r}} \right) \nonumber \\
&\quad \cdot \left( \sum_{M_{r+1} = 0}^{\infty} \frac{(q^{-\lambda_{r+1}+1} ;q)_{M_{r+1}}}{(q ;q)_{M_{r+1}}} (q^{\mu_{r+1} + N})^{M_{r+1}} \right) \nonumber \\ 
&\quad \cdot \left( \sum_{M_{r+2} = 0}^{\infty} \frac{(q^{-\lambda_{r+2}+1} ;q)_{M_{r+2}}}{(q ;q)_{M_{r+2}}} (q^{\mu_{r+1} + \mu_{r+2} - \lambda_{r+1} + N})^{M_{r+2}} \right) \nonumber \\
&\quad \cdots \left( \sum_{M_{n-3} = 0}^{\infty} \frac{(q^{-\lambda_{n-3}+1} ;q)_{M_{n-3}}}{(q ;q)_{M_{n-3}}} (q^{\mu_{r+1} + \cdots + \mu_{n-3} - (\lambda_{r+1} + \cdots + \lambda_{n-4}) + N})^{M_{n-3}} \right) \nonumber \\
&\quad \cdot \left( \sum_{M_{n-2} = 0}^{\infty} \frac{(q^{-\lambda_{n-2}+1} ;q)_{M_{n-2}}}{(q ;q)_{M_{n-2}}} (q^{\mu_{r+1} + \cdots + \mu_{n-2} - (\lambda_{r+1} + \cdots + \lambda_{n-3}) + N})^{M_{n-2}} \right) . \nonumber 
\end{align}
The conditions in Corollary \ref{prop:convn} concerning the eigenvalues of the matrices $I_{j+1} - B_0^{[j+1]}$ and $I_{j+1} - B_0^{[j+1]} - B_1^{[j+1]}$ ($j = 0, 1, \ldots, n-2$) ensure that the $q$-binomial theorem is applicable.
Hence, equation \eqref{eq:gn1-14} can be rewritten as
\begin{align}
& \frac{(\overbrace{q, q, \ldots , q}^{n-1}   ; q)_{\infty}}{(q\beta/\alpha, q^{-\lambda_0 + 1} , q^{-\lambda_1 + 1 } , \ldots , q^{-\lambda_{r-1} + 1}, q^{-\lambda_{r+1} + 1}, \ldots, q^{-\lambda_{n-2} + 1} ; q)_{\infty}} \label{eq:gn1-15} \\
&\cdot \sum_{N = 0}^{\infty}  
\frac{(q^{-(\mu_0 + \cdots + \mu_{r}) + \lambda_0 + \cdots + \lambda_{r} + N + 1}; q)_{\infty}}{(q^{-(\mu_0 + \cdots + \mu_{r}) + \lambda_0 + \cdots + \lambda_{r} + N}\alpha/\beta; q)_{\infty}}
\frac{(q^{-(\mu_1 + \cdots + \mu_{r}) + \lambda_1 + \cdots + \lambda_{r} + N + 1}; q)_{\infty}}{(q^{-(\mu_1 + \cdots + \mu_{r}) + \lambda_0 + \cdots + \lambda_{r} + N}; q)_{\infty}} \nonumber \\
&\cdots \frac{(q^{-\mu_{r} + \lambda_{r} + N + 1}; q)_{\infty}}{(q^{-\mu_{r}+ \lambda_{r-1} + \lambda_{r} + N}; q)_{\infty}}
\frac{(q^{\mu_{r+1} - \lambda_{r+1} + N + 1}; q)_{\infty}}{(q^{\mu_{r+1} + N}; q)_{\infty}}  \frac{(q^{\mu_{r+1} + \mu_{r+2}- (\lambda_{r+1} + \lambda_{r+2}) + N + 1}; q)_{\infty}}{(q^{\mu_{r+1} + \mu_{r+2}- \lambda_{r+1}+ N}; q)_{\infty}} \nonumber \\
&\cdots 
\frac{(q^{\mu_{r+1} + \cdots + \mu_{n-3} - (\lambda_{r+1} + \cdots + \lambda_{n-3}) + N + 1} ;q)_{\infty}}{(q^{\mu_{r+1} + \cdots + \mu_{n-3} - (\lambda_{r+1} + \cdots + \lambda_{n-4}) + N} ;q)_{\infty}}
\frac{(q^{\mu_{r+1} + \cdots + \mu_{n-2} - (\lambda_{r+1} + \cdots + \lambda_{n-2}) + N + 1} ;q)_{\infty}}{(q^{\mu_{r+1} + \cdots + \mu_{n-2} - (\lambda_{r+1} + \cdots + \lambda_{n-3}) + N} ;q)_{\infty}} \nonumber \\
&\cdot \frac{(q^{\lambda_{r}} ;q)_{N}}{(q ;q)_{N}} (q^{-L_{n-2} }\beta x )^N \nonumber \\
&= \frac{(\overbrace{q, q, \ldots , q}^{n-1}   ; q)_{\infty}}{(q\beta/\alpha, q^{-\lambda_0 + 1} , q^{-\lambda_1 + 1 } , \ldots , q^{-\lambda_{r-1} + 1}, q^{-\lambda_{r+1} + 1}, \ldots, q^{-\lambda_{n-2} + 1} ; q)_{\infty}}  \nonumber \\
&\cdot \frac{(\bar{b}_1, \ldots, \bar{b}_{n-1} ; q)_{\infty}}{(\bar{a}_1, \ldots, \bar{a}_{n-1}  ; q)_{\infty}} \;
 \sum_{N = 0}^{\infty} \frac{ (q^{\lambda_{r}}, \bar{a}_1, \ldots,  \bar{a}_{n-1}  ; q)_{N}}
{(q, \bar{b}_1, \ldots, \bar{b}_{n-1} ;q)_{N}}
(q^{ -L_{n-2} } \beta x )^N , \nonumber 
\end{align}
where
\begin{align*}
& \bar{a}_1 = q^{-(\mu_0 + \cdots + \mu_{r}) + \lambda_0 + \cdots + \lambda_{r}}\alpha/\beta, \  
\bar{a}_2 = q^{-(\mu_1 + \cdots + \mu_{r}) + \lambda_0 + \cdots + \lambda_{r}},  \ldots , \bar{a}_{r+1} = q^{-\mu_{r} + \lambda_{r-1} + \lambda_{r}},  \\
& \bar{a}_{r+2} = q^{\mu_{r+1}}, \  \bar{a}_{r+3} = q^{\mu_{r+1} + \mu_{r+2} - \lambda_{r+1}}, \ldots, \bar{a}_{n-2} = q^{\mu_{r+1} + \cdots + \mu_{n-3} - (\lambda_{r+1} + \cdots + \lambda_{n-4})}, \\ 
& \bar{a}_{n-1} = q^{\mu_{r+1} + \cdots + \mu_{n-2} - (\lambda_{r+1} + \cdots + \lambda_{n-3})} ,
\end{align*}
\begin{align*}
& \bar{b}_1 = q^{-(\mu_0 + \cdots + \mu _{r}) + \lambda_0 + \cdots + \lambda_{r} + 1} , \ 
\bar{b}_2 =  q^{-(\mu_1 + \cdots + \mu_{r}) + \lambda_1 + \cdots + \lambda_{r} + 1}, \ldots , \bar{b}_{r+1} = q^{-\mu_{r} + \lambda_{r} + 1}, \\
& \bar{b}_{r+2} = q^{\mu_{r+1} - \lambda_{r+1} + 1}, \  
\bar{b}_{r+3} = q^{\mu_{r+1} + \mu_{r+2} - (\lambda_{r+1} + \lambda_{r+2}) + 1}, \ldots, \\
& \bar{b}_{n-2} = q^{\mu_{r+1} + \cdots + \mu_{n-3} - (\lambda_{r+1} + \cdots + \lambda_{n-3}) + 1 }, \ 
\bar{b}_{n-1} = q^{\mu_{r+1} + \cdots + \mu_{n-2} - (\lambda_{r+1} + \cdots + \lambda_{n-2}) + 1 } .
\end{align*}
By combining with equation \eqref{eq:gn1-13}, we obtain equation \eqref{eq:g1-c2}.
The cases $r = 0$ and $r = n-2$ follow by analogous calculations. 
\end{proof}
The proofs of the remaining cases are omitted here, since they can be obtained by similar calculations involving suitable changes of summation indices and applications of the $q$-binomial theorem.
Outlines of proofs of equations \eqref{eq:g1-c1}, \eqref{eq:g1-c3}, and \eqref{eq:g1-c4} are provided in Appendix \ref{sec:prfg1}.
Explicit examples of $q$-hypergeometric series solutions to equations of orders $3$-$6$ are presented in Appendix \ref{sec:exqhgs}.

Equations \eqref{eq:g1-c1} and \eqref{eq:g1-c2} give $q$-hypergeometric series solutions to equation \eqref{eq:gn1single-thm} at $x = 0$, while equations \eqref{eq:g1-c3} and \eqref{eq:g1-c4} give those at $x = \infty$.
Thus, we obtain $n$ solutions at each of $x = 0$ and $x = \infty$.
By applying the analytic continuation formula \cite[(4.5.2)]{GR}
\begin{align*}
{}_{r+1}\phi_{r} \left( {a_1, a_2 , \ldots, a_{r+1} \atop{b_1, \ldots, b_r}} ; q, z \right) 
&= \frac{(a_2, \ldots, a_{r+1}, b_1/a_1, \ldots, b_r/a_1, a_1 z, q/(a_1 z) ;q)_{\infty}}{(b_1, \ldots, b_{r}, a_2/a_1, \ldots, a_{r+1}/a_1, z, q/z ;q)_{\infty}} \\
&\quad \times {}_{r+1}\phi_{r} \left( {a_1, a_1 q/b_1 , \ldots, a_1 q/b_r \atop{a_1 q/a_2, \ldots, a_1 q/a_{r+1}}} ; q, \frac{q b_1 \cdots b_r}{z a_1 \cdots a_{r+1}} \right) \\
&\quad + idem (a_1; a_2, \ldots, a_{r+1}) 
\end{align*}
to the $q$-hypergeometric series solutions obtained above, the solutions at $x = 0$ can be expressed as linear combinations of the solutions at $x = \infty$.
For the connection relation, we introduce the following notation for the solutions in Proposition \ref{prop:qhgssol}.
Let $g^{(0)}_{1}(x)$ denote the solution \eqref{eq:g1-c1} in \ref{case:1}, and let $g^{(0)}_{2R}(x)$, $R = 0, 1, \ldots, n-2$, denote the solutions \eqref{eq:g1-c2} in \ref{case:2}, where the index $r$ used in \ref{case:2} is relabeled as $R$.
Similarly, let $g^{(\infty)}_{3}(x)$ denote the solution \eqref{eq:g1-c3} in \ref{case:3}, and let $g^{(\infty)}_{4r}(x)$, $r = 0, 1, \ldots, n-2$, denote the solutions \eqref{eq:g1-c4} in \ref{case:4}.
To simplify the expression for the connection coefficients, set 
\[
\widehat{a}_k=q^{\widetilde{M}_{k-1}-L_{k-2}},\qquad  \widehat{a}_n = \alpha/\beta, \qquad \widehat{b}_k=q^{\widetilde{M}_{k-1}-L_{k-1}+1} \quad (k=1,\ldots,n-1), 
\]
and
\begin{align*}
& \widetilde{a}_1 = q^{-\widetilde{M}_R + L_R} \alpha/\beta, \\
& \widetilde{a}_k =
\begin{cases}
q^{-\sum_{j=k-1}^{R}\mu_j + \sum_{j=k-2}^{R}\lambda_j}, \quad 2 \le k \le R+1, \\
q^{\sum_{j=R+1}^{k-1}\mu_j - \sum_{j=R+1}^{k-2}\lambda_j}, \quad R+2\le k\le n-1,
\end{cases} \\
& \widetilde{a}_n = q^{\lambda_R}, \\
& \widetilde{b}_k =
\begin{cases}
q^{\sum_{j=k-1}^{R}(-\mu_j + \lambda_j) + 1}, \quad 1 \le k \le R+1, \\
q^{\sum_{j=R+1}^{k-1}(\mu_j - \lambda_j) + 1}, \quad R+2 \le k \le n-1,
\end{cases} \\
& z = q^{-L_{n-2}}\beta x .
\end{align*}
The following proposition gives the connection relation among the solutions.
\begin{prop}
Assume that the parameters and $x$ are such that all the expressions below are well-defined.
The solutions $g^{(0)}_1(x)$, $g^{(0)}_{2R}(x)$ $(R = 0, 1, \ldots, n-2)$, $g^{(\infty)}_3(x)$, and $g^{(\infty)}_{4r}(x)$ $(r = 0, 1, \ldots, n-2)$ satisfy the following connection relation:
\[
\begin{pmatrix}
  g^{(0)}_1(x) \\
  g^{(0)}_{20}(x) \\
  g^{(0)}_{21}(x) \\
  \vdots \\
  g^{(0)}_{2,n-2}(x)
\end{pmatrix}
= \begin{pmatrix}
      C^*_3(x)     & C^*_{40}(x)     & C^*_{41}(x)     & \cdots & C^*_{4,n-2}(x) \\
      C^0_3(x)     & C^0_{40}(x)     & C^0_{41}(x)     & \cdots & C^0_{4,n-2}(x) \\
      C^1_3(x)     & C^1_{40}(x)     & C^1_{41}(x)     & \cdots & C^1_{4,n-2}(x) \\ 
      \vdots    & \vdots       & \vdots       & \ddots & \vdots \\
      C^{n-2}_3(x) & C^{n-2}_{40}(x) & C^{n-2}_{41}(x) & \cdots & C^{n-2}_{4,n-2}(x)      
\end{pmatrix} 
\begin{pmatrix}
      g^{(\infty)}_3(x) \\
      g^{(\infty)}_{40}(x) \\
      g^{(\infty)}_{41}(x) \\
      \vdots \\
      g^{(\infty)}_{4,n-2}(x)
\end{pmatrix} ,
\]
where
\begin{align*}
C^*_3(x)
&= q^{-\mu_0 L_{n-2} - \sum_{j=1}^{n-2}(\mu_j - \lambda_{j-1})T_j + \sum_{j=0}^{n-2}(n-1-j)(\mu_{j} - \lambda_{j})} \left( \frac{\beta}{\alpha} \right)^{n-1} x^{-\widetilde{\mu} + \mu_0} \\
&\quad \cdot \frac{(\widehat{b}_1/\widehat{a}_n, \ldots, \widehat{b}_{n-1}/\widehat{a}_n, \widehat{a}_n q/\widehat{b}_1, \ldots, \widehat{a}_n q/\widehat{b}_{n-1}, \widehat{a}_n z, q/(\widehat{a}_n z) ; q)_{\infty}}{(\widehat{a}_1/\widehat{a}_n, \ldots, \widehat{a}_{n-1}/\widehat{a}_n, \widehat{a}_n q/\widehat{a}_1, \ldots, \widehat{a}_n q/\widehat{a}_{n-1}, z, q/z ; q)_{\infty}} ,
\end{align*}
\begin{align*}
C^*_{4r}(x) 
&= q^{-\mu_0 L_{n-2} - \sum_{j=1}^{n-2}(\mu_j - \lambda_{j-1})T_j - F_r} \alpha^{\widetilde{M}_r - L_{r-1}} x^{\widetilde{M}_r - L_{r-1}} \\
&\quad \cdot 
\frac{\prod_{j=1}^{n-1} (\widehat{b}_j/\widehat{a}_{r+1}; q)_{\infty}}{\prod_{\substack{1 \le j\le n \\ j\neq r+1}}(\widehat{a}_j/\widehat{a}_{r+1}; q)_{\infty}} 
\frac{(q\beta/\alpha, \widehat{a}_n; q)_{\infty}}{(q^{-\lambda_r + 1}; q)_{\infty}} 
\frac{\prod_{\substack{1 \le j\le n-1 \\ j\neq r+1}}(\widehat{a}_{r+1}q/\widehat{b}_j; q)_{\infty}}
{\prod_{\substack{1 \le j\le n \\ j\neq r+1}}(\widehat{a}_{r+1}q/\widehat{a}_j; q)_{\infty}} \\
&\quad \cdot \frac{(\widehat{a}_{r+1} z, q/(\widehat{a}_{r+1} z); q)_{\infty}}{(z, q/z; q)_{\infty}},
\end{align*}
\begin{align*}
C^R_{3}(x) 
&= q^{E_R + \sum_{j=0}^{n-2}(n-1-j)(\mu_j - \lambda_j)} \alpha^{1-n} \beta^{-\widetilde{\mu} - \sum_{j=1}^R \mu_{j} + L_R + n-1} x^{-\widetilde{\mu} - \sum_{j=1}^{R} \mu_j + L_{R}}  \\
&\quad \cdot \frac{(\widetilde{b}_1/\widetilde{a}_1, \ldots, \widetilde{b}_{n-1}/\widetilde{a}_1, q^{-\lambda_R +1}, \widetilde{a}_n, \widetilde{a}_1 q/\widetilde{b}_2, \ldots, \widetilde{a}_1 q/\widetilde{b}_{n-1}, \widetilde{a}_1 z, q/(\widetilde{a}_1 z) ; q)_{\infty}}{(\widetilde{a}_2/\widetilde{a}_1, \ldots, \widetilde{a}_{n}/\widetilde{a}_1, q\beta/\alpha, \widetilde{a}_1 q/\widetilde{a}_2, \ldots, \widetilde{a}_1 q/\widetilde{a}_{n}, z, q/z ; q)_{\infty}} ,
\end{align*}
and
\begin{align*}
C^R_{4r}(x)
&= q^{E_R - F_r} \alpha^{\widetilde{M}_r - L_{r-1}} \beta^{-\widetilde{\mu} - \sum_{j=1}^R \mu_{j} + L_R} x^{-\sum_{j= r+1}^R \mu_{j} + \sum_{j=r}^R \lambda_{j}} \\
&\quad \cdot 
\frac{\prod_{j=1}^{n-1} (\widetilde{b}_j/\widetilde{a}_{r+2}; q)_{\infty}}{\prod_{\substack{1 \le j\le n \\ j\neq r+2}}(\widetilde{a}_j/\widetilde{a}_{r+2}; q)_{\infty}} 
\frac{(q^{-\lambda_R +1}, \widetilde{a}_{n}; q)_{\infty}}{(q^{-\lambda_r + 1}; q)_{\infty}}
\frac{\prod_{\substack{1 \le j\le n-1 \\ j\neq r+2}}(\widetilde{a}_{r+2}q/\widetilde{b}_j; q)_{\infty}}
{\prod_{\substack{1 \le j\le n \\ j\neq r+2}}(\widetilde{a}_{r+2}q/\widetilde{a}_j; q)_{\infty}} \\
&\quad \cdot \frac{(\widetilde{a}_{r+2} z, q/(\widetilde{a}_{r+2} z); q)_{\infty}}{(z, q/z; q)_{\infty}}, 
\qquad\qquad 0 \le r < R,
\end{align*}
\begin{align*}
C^R_{4R}(x)
&= q^{E_R - F_R} \alpha^{\widetilde{M}_R - L_{R-1}} \beta^{-\widetilde{\mu} - \sum_{j=1}^R \mu_{j} + L_R} x^{\lambda_{R}} \\
&\quad \cdot \frac{(\widetilde{b}_1/\widetilde{a}_{n}, \ldots, \widetilde{b}_{n-1}/\widetilde{a}_{n}, \widetilde{a}_{n}q/\widetilde{b}_1, \ldots, \widetilde{a}_{n}q/\widetilde{b}_{n-1}, \widetilde{a}_n z, q/(\widetilde{a}_{n} z); q)_{\infty}}{(\widetilde{a}_1/\widetilde{a}_{n}, \ldots, \widetilde{a}_{n-1}/\widetilde{a}_{n}, \widetilde{a}_{n}q/\widetilde{a}_1, \ldots, \widetilde{a}_{n}q/\widetilde{a}_{n-1}, z, q/z ; q)_{\infty}} ,
\end{align*}
\begin{align*}
C^R_{4r}(x)
&= q^{E_R - F_r} \alpha^{\widetilde{M}_r - L_{r-1}} \beta^{-\widetilde{\mu} - \sum_{j=1}^R \mu_{j} + L_R} x^{\sum_{j= R+1}^r \mu_{j} - \sum_{j=R+1}^{r-1} \lambda_{j}} \\
&\quad \cdot 
\frac{\prod_{j=1}^{n-1} (\widetilde{b}_j/\widetilde{a}_{r+1}; q)_{\infty}}{\prod_{\substack{1 \le j\le n \\ j\neq r+1}}(\widetilde{a}_j/\widetilde{a}_{r+1}; q)_{\infty}} 
\frac{(q^{-\lambda_R +1}, \widetilde{a}_{n}; q)_{\infty}}{(q^{-\lambda_r + 1}; q)_{\infty}}
\frac{\prod_{\substack{1 \le j\le n-1 \\ j\neq r+1}}(\widetilde{a}_{r+1}q/\widetilde{b}_j; q)_{\infty}}
{\prod_{\substack{1 \le j\le n \\ j\neq r+1}}(\widetilde{a}_{r+1}q/\widetilde{a}_j; q)_{\infty}} \\
&\quad \cdot \frac{(\widetilde{a}_{r+1} z, q/(\widetilde{a}_{r+1} z); q)_{\infty}}{(z, q/z; q)_{\infty}},
\qquad\qquad R < r \le n-2 .
\end{align*}
\end{prop}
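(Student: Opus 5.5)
We obtain the connection relation by applying the analytic continuation formula \cite[(4.5.2)]{GR} to each solution at $x=0$ and identifying the $n$ resulting terms with the $n$ solutions at $x=\infty$ from Proposition \ref{prop:qhgssol}(ii). We treat the rows of the matrix one at a time.

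\emph{First row.} Write $g^{(0)}_1(x)$ from \eqref{eq:g1-c1} as a prefactor times ${}_n\phi_{n-1}(\widehat a_n,\widehat a_1,\ldots,\widehat a_{n-1};\widehat b_1,\ldots,\widehat b_{n-1};q,z)$, with $z=q^{-L_{n-2}}\beta x$. Applying (4.5.2) gives $n$ terms. In the $j$-th term the distinguished numerator parameter is $\widehat a_j$, and the argument becomes $q\,\widehat b_1\cdots\widehat b_{n-1}/(z\,\widehat a_1\cdots\widehat a_n)$. First we check that this argument equals $q^n/(\alpha x)$. The products telescope:
\[
\prod_{k=1}^{n-1}\widehat b_k/\widehat a_k=\prod_{k=1}^{n-1} q^{1-\lambda_{k-1}}=q^{n-1-L_{n-2}},
\]
and hence
\[
\frac{q\prod\widehat b_k}{z\prod\widehat a_k}=\frac{q^{n}q^{-L_{n-2}}}{q^{-L_{n-2}}\beta x\cdot \alpha/\beta}=\frac{q^n}{\alpha x},
\]
as required.

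Next we match the upper and lower parameters term by term.
\begin{itemize}
\item For the term with $\widehat a_n=\alpha/\beta$, the new parameters $\widehat a_n q/\widehat b_k$ and $\widehat a_n q/\widehat a_k$ should reproduce $\bar a_k$ and $\bar b_k$ of \ref{case:3}. For instance, $\widehat a_nq/\widehat b_k=q^{-\widetilde M_{k-1}+L_{k-1}}\alpha/\beta=\bar a_k$, and similarly for the $\bar b_k$.
\item For the term with $\widehat a_{r+1}$, $r=0,\ldots,n-2$, the new parameters should reproduce those of \ref{case:4} with the same $r$. In particular, $\widehat a_{r+1}q/\widehat b_{r+1}=q^{\lambda_r}$ gives the extra numerator parameter $q^{\lambda_r}$.
\end{itemize}
These are exponent bookkeeping identities in $\widetilde M_k$ and $L_k$, to be checked separately for $k\le r+1$ and $k\ge r+2$.

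With the ${}_n\phi_{n-1}$ factors identified, we divide each continuation term by the explicit prefactor of $g^{(\infty)}_3$ or $g^{(\infty)}_{4r}$ from \eqref{eq:g1-c3}, \eqref{eq:g1-c4}. The quotient gives $C^*_3(x)$ or $C^*_{4r}(x)$. The powers $(1-q)^{n-1}$ and $(q,\ldots,q;q)_\infty$ cancel. The $x$-powers combine into the displayed monomials, for example $x^{-\widetilde\mu+\mu_0}$, using $q^{\widetilde\mu}=q^{\mu_0}\beta/\alpha$. The remaining infinite products combine with the ratio of the $(\bar b;q)_\infty/(\bar a;q)_\infty$ prefactors to give the stated quotients. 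The theta-like factor $(a z,q/(az);q)_\infty/(z,q/z;q)_\infty$ is carried over directly from (4.5.2).

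\emph{Rows $R=0,\ldots,n-2$.} Do the same for $g^{(0)}_{2R}$ with parameters $\widetilde a_1,\ldots,\widetilde a_n$, where $\widetilde a_n=q^{\lambda_R}$. The argument check is analogous. The main obstacle is this matching, since here the indices shift. The term in $\widetilde a_1$ (which contains $\alpha/\beta$) should give $g^{(\infty)}_3$. The term in $\widetilde a_n=q^{\lambda_R}$ should give $g^{(\infty)}_{4R}$. The term in $\widetilde a_{r+2}$ should give $g^{(\infty)}_{4r}$ for $r<R$, and the term in $\widetilde a_{r+1}$ should give $g^{(\infty)}_{4r}$ for $r>R$.

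I would first verify the matching for small $n$, say $n=3,4$, against Appendix \ref{sec:exqhgs} to fix the pattern. I would then prove the general parameter identities $\widetilde a_{\bullet}q/\widetilde b_k$ and $\widetilde a_\bullet q/\widetilde a_k$ by splitting into the ranges $k\le r+1$, $r+2\le k\le R+1$, and $k\ge R+2$, and symmetrically when $r>R$. Finally, collecting the exponents $E_R-F_r$, the powers of $\alpha$ and $\beta$, and the $x$-powers yields the stated $C^R_3$ and $C^R_{4r}$. Well-definedness, meaning no vanishing denominators and $z\notin q^{\mathbb Z}$, is assumed in the statement.
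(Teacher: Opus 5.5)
Your proposal is correct and follows the route the paper indicates. The paper only says to apply \cite[(4.5.2)]{GR} and omits the details. The steps are the ones you give: check that the new argument is $q^n/(\alpha x)$, match the $n$ continued series with Case~3 and Case~4, and take ratios of prefactors. The term assignments are: $\widehat a_n$ and $\widetilde a_1$ give Case~3, $\widetilde a_n=q^{\lambda_R}$ gives $r=R$, $\widetilde a_{r+2}$ gives $r<R$, and $\widetilde a_{r+1}$ gives $r>R$.

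The parameter and exponent identities you list all check out in each index range. One inessential point: $x^{\mu_0-\widetilde\mu}$ in $C^*_3$ comes directly from the ratio of the $x$-powers in \eqref{eq:g1-c1} and \eqref{eq:g1-c3}, so $q^{\widetilde\mu}=q^{\mu_0}\beta/\alpha$ is not needed there.
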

We omit the details of the calculation.

Ohyama \cite{Ohy} obtained a connection formula for a higher-order $q$-difference equation satisfied by $q$-hypergeometric series 
${}_r \phi_{r-1}$ with all upper parameters equal to $0$, using $q$-Borel and $q$-Laplace transformations.

\section*{Acknowledgements}
The author is grateful to Professor Kouichi Takemura for his valuable comments and helpful suggestions. 

\appendix
\section{Generalized $q$-hypergeometric equations of orders $4$-$6$} \label{sec:geneq4-6}
We present explicit generalized $q$-hypergeometric equations of orders $4$-$6$ obtained by applying Theorem \ref{thm:g1n-single-sec4}.
\begin{align} 
&(\alpha x-1) g_1^{[4]}(q^4 x)  \label{eq:g1-o4}\\
&\quad - q^{-(\lambda_0 + \lambda_1 + \lambda_2)}
[ \{ ( q^{\mu_1 + \mu_2} + q^{\mu_2 + \lambda_0} + q^{\lambda_0 + \lambda_1})\alpha + q^{\mu_0 + \mu_1 + \mu_2}\beta \}x \nonumber \\
&\quad\qquad - (q^{\mu_0 + \mu_1 + \mu_2} + q^{\mu_1 + \mu_2 + \lambda_0} + q^{\mu_2 + \lambda_0 + \lambda_1} + q^{\lambda_0 + \lambda_1 + \lambda_2}) ] g_1^{[4]}(q^3 x)  \nonumber \\
&\quad + q^{\mu_2 - 2(\lambda_0 + \lambda_1 + \lambda_2)}
[ \{(q^{\mu_1 + \mu_2 + \lambda_0} + q^{\mu_1 + \lambda_0 + \lambda_1} + q^{2\lambda_0 + \lambda_1}) \alpha + (q^{\mu_0 + 2\mu_1 + \mu_2} \nonumber \\
&\quad\qquad + q^{\mu_0 + \mu_1 + \mu_2 +\lambda_0} + q^{\mu_0 + \mu_1 +\lambda_0 + \lambda_1})\beta \} x - q^{\lambda_0}(q^{\mu_0 + 2\mu_1 + \mu_2} + q^{\mu_0 + \mu_1 + \mu_2 + \lambda_1} \nonumber \\
&\quad\qquad + q^{\mu_1 + \mu_2 + \lambda_0 + \lambda_1} + q^{\mu_0 + \mu_1 + \lambda_1 + \lambda_2} + q^{\mu_1 + \lambda_0 + \lambda_1 + \lambda_2 } + q^{\lambda_0 + 2\lambda_1 + \lambda_2}) ] g_1^{[4]}(q^2x) \nonumber \\
&\quad - q^{\mu_1 + 2\mu_2 - (2\lambda_0 + 3\lambda_1 + 3\lambda_2)} 
[ \{ q^{\lambda_0 + \lambda_1}\alpha + (q^{\mu_0 + \mu_1 + \mu_2} + q^{\mu_0 + \mu_1 + \lambda_1} + q^{\mu_0 + \lambda_0 + \lambda_1}) \beta \} x \nonumber \\
&\quad\qquad - q^{\lambda_0 + \lambda_1}(q^{\mu_0 + \mu_1 + \mu_2} + q^{\mu_0 + \mu_1 + \lambda_2} + q^{\mu_0 + \lambda_1 + \lambda_2} + q^{\lambda_0 + \lambda_1 + \lambda_2})] g_1^{[4]}(qx) \nonumber \\
&\quad + q^{\mu_0 + 2\mu_1 + 3\mu_2 - (\lambda_0 + 2\lambda_1 + 3\lambda_2)}(q^{-(\lambda_0 + \lambda_1 + \lambda_2)}\beta x - 1) g_1^{[4]}(x)
=0. \nonumber
\end{align}
\begin{align}
&(\alpha x-1) g_1^{[5]}(q^5 x) \label{eq:g1-o5} \\
&\qquad + R_4(x) g_1^{[5]}(q^4 x) + R_3(x) g_1^{[5]}(q^3 x) + R_2(x) g_1^{[5]}(q^2 x) + R_1(x) g_1^{[5]}(qx)  \nonumber \\
&\qquad- q^{\mu_0 + 2\mu_1 + 3\mu_2 + 4\mu_3 - (\lambda_0 + 2\lambda_1 + 3\lambda_2 + 4\lambda_3)} (q^{-(\lambda_0 + \lambda_1 + \lambda_2 + \lambda_3)}\beta x - 1) g_1^{[5]}(x)
=0,  \nonumber
\end{align}
where
\begin{align*} 
R_4(x) 
&= - q^{-(\lambda_0 + \lambda_1 + \lambda_2 + \lambda_3)}
[ \{ ( q^{\mu_1 + \mu_2 + \mu_3} + q^{\mu_2 + \mu_3 + \lambda_0} + q^{\mu_3 + \lambda_0 + \lambda_1} + q^{\lambda_0 + \lambda_1  + \lambda_2})\alpha \\
&\quad + q^{\mu_0 + \mu_1 + \mu_2 + \mu_3}\beta \}x \\
&\quad - (q^{\mu_0 + \mu_1 + \mu_2 + \mu_3} + q^{\mu_1 + \mu_2 + \mu_3 + \lambda_0} + q^{\mu_2 + \mu_3 + \lambda_0 + \lambda_1} + q^{\mu_3 + \lambda_0 + \lambda_1 + \lambda_2} \\
&\quad + q^{\lambda_0 + \lambda_1 + \lambda_2 + \lambda_3}) ]  ,
\end{align*}
\begin{align*}
& R_3(x) 
= q^{\mu_3 - 2(\lambda_0 + \lambda_1 + \lambda_2 + \lambda_3)} \\
&\quad \cdot [ \{(q^{\mu_1 + 2\mu_2 + \mu_3 + \lambda_0} + q^{\mu_1 + \mu_2 + \mu_3 + \lambda_0 + \lambda_1} + q^{\mu_1 + \mu_2 + \lambda_0 + \lambda_1 +\lambda_2} + q^{\mu_2 + \mu_3 + 2\lambda_0 + \lambda_1} \\
&\quad + q^{\mu_2 + 2\lambda_0 + \lambda_1 + \lambda_2} + q^{2\lambda_0 + 2\lambda_1 + \lambda_2}) \alpha   \\
&\quad + (q^{\mu_0 + 2\mu_1 + 2\mu_2 + \mu_3}+ q^{\mu_0 + \mu_1 + 2\mu_2 + \mu_3 + \lambda_0} + q^{\mu_0 + \mu_1 + \mu_2 + \mu_3 +\lambda_0 + \lambda_1}\\
&\quad +  q^{\mu_0 + \mu_1 + \mu_2 +\lambda_0 + \lambda_1 + \lambda_2})\beta \} x \\
&\quad - q^{\lambda_0}(q^{\mu_0 + 2\mu_1 + 2\mu_2 + \mu_3} + q^{\mu_0 + \mu_1 + 2\mu_2 + \mu_3 + \lambda_1}  + q^{\mu_0 + \mu_1 + \mu_2 + \mu_3 + \lambda_1 + \lambda_2} \\
&\quad + q^{ \mu_1 + 2\mu_2 + \mu_3 + \lambda_0 + \lambda_1} + q^{\mu_0 + \mu_1 + \mu_2 + \lambda_1 + \lambda_2 + \lambda_3}  + q^{\mu_1 + \mu_2 + \mu_3  + \lambda_0 + \lambda_1 + \lambda_2} \\
&\quad + q^{\mu_1 + \mu_2 + \lambda_0 + \lambda_1 + \lambda_2 + \lambda_3} + q^{\mu_2 + \mu_3 + \lambda_0 + 2\lambda_1 + \lambda_2 } + q^{\mu_2 + \lambda_0 + 2\lambda_1 + \lambda_2 + \lambda_3} + q^{\lambda_0 + 2\lambda_1 + 2\lambda_2 + \lambda_3} ) ]  ,
\end{align*}
\begin{align*}
& R_2(x)
= - q^{\mu_2 + 2\mu_3 - (2\lambda_0 + 3\lambda_1 + 3\lambda_2 + 3\lambda_3)} \\
&\quad \cdot [ \{ (q^{\mu_1 + \mu_2 + \mu_3 + \lambda_0 + \lambda_1} + q^{\mu_1 + \mu_2 + \lambda_0 + \lambda_1 + \lambda_2} + q^{\mu_1 + \lambda_0 + 2\lambda_1 + \lambda_2 } \\
&\quad +  q^{2\lambda_0 + 2\lambda_1 + \lambda_2}) \alpha \\
&\quad + (q^{\mu_0 + 2\mu_1 + 2\mu_2 + \mu_3}+ q^{\mu_0 + 2\mu_1 + \mu_2 + \mu_3 + \lambda_1} + q^{\mu_0 + \mu_1 + \mu_2 + \mu_3 +\lambda_0 + \lambda_1} +  q^{\mu_0 + 2\mu_1 + \mu_2 +\lambda_1 + \lambda_2} \\
&\quad + q^{\mu_0 + \mu_1 + \mu_2 +\lambda_0 + \lambda_1 + \lambda_2} + q^{\mu_0 + \mu_1 + \lambda_0 + 2\lambda_1 + \lambda_2})\beta \} x \\
&\quad - q^{\lambda_0 + \lambda_1}(q^{\mu_0 + 2\mu_1 + 2\mu_2 + \mu_3} + q^{\mu_0 + 2\mu_1 + \mu_2 + \mu_3 + \lambda_2} + q^{\mu_0 + 2\mu_1 + \mu_2 + \lambda_2 + \lambda_3} \\
&\quad  + q^{\mu_0 +  \mu_1 + \mu_2 + \mu_3 + \lambda_1 + \lambda_2} + q^{\mu_0 + \mu_1 + \mu_2 + \lambda_1 + \lambda_2 + \lambda_3} + q^{\mu_1 + \mu_2 + \mu_3 + \lambda_0 + \lambda_1 + \lambda_2} \\
&\quad + q^{\mu_1 + \mu_2 + \lambda_0 + \lambda_1 + \lambda_2 + \lambda_3} + q^{\mu_0 + \mu_1 + \lambda_1 + 2\lambda_2 + \lambda_3 } + q^{\mu_1 + \lambda_0 + \lambda_1 + 2\lambda_2 + \lambda_3 } + q^{\lambda_0 + 2\lambda_1 + 2\lambda_2 + \lambda_3}) ] ,
\end{align*}
\begin{align*}
R_1(x)
&= q^{\mu_1 + 2\mu_2 + 3\mu_3 - (2\lambda_0 + 3\lambda_1 + 4\lambda_2 + 4\lambda_3)} \\
&\quad \cdot [ \{ q^{\lambda_0 + \lambda_1 + \lambda_2}\alpha + (q^{\mu_0 + \mu_1 + \mu_2 + \mu_3} + q^{\mu_0 + \mu_1 + \mu_2 + \lambda_2} + q^{\mu_0 + \mu_1 + \lambda_1 + \lambda_2} \\
&\quad + q^{\mu_0 + \lambda_0 + \lambda_1 + \lambda_2}) \beta \} x \\
&\quad - q^{\lambda_0 + \lambda_1 + \lambda_2}(q^{\mu_0 + \mu_1 + \mu_2 + \mu_3} + q^{\mu_0 + \mu_1 + \mu_2 + \lambda_3} + q^{\mu_0 + \mu_1 + \lambda_2 + \lambda_3} + q^{\mu_0 + \lambda_1 + \lambda_2 + \lambda_3} \\
&\quad + q^{\lambda_0 + \lambda_1 + \lambda_2 + \lambda_3})] .
\end{align*} 
\begin{align}
&(\alpha x-1) g_1^{[6]}(q^6 x) \label{eq:g1-o6} \\
&\quad + S_5(x) g_1^{[6]}(q^5 x) + S_4(x) g_1^{[6]}(q^4 x) + S_3(x) g_1^{[6]}(q^3 x) + S_2(x) g_1^{[6]}(q^2 x) + S_1(x) g_1^{[6]}(qx)  \nonumber \\
&\quad + q^{\mu_0 + 2\mu_1 + 3\mu_2 + 4\mu_3 + 5\mu_4 - (\lambda_0 + 2\lambda_1 + 3\lambda_2 + 4\lambda_3 + 5\lambda_4)} (q^{-(\lambda_0 + \lambda_1 + \lambda_2 + \lambda_3 + \lambda_4)}\beta x - 1) g_1^{[6]}(x)
=0,  \nonumber
\end{align}
where
\begin{align*} 
&S_5(x)
= - q^{-(\lambda_0 + \lambda_1 + \lambda_2 + \lambda_3 + \lambda_4)} \\
&\quad \cdot [ \{ ( q^{\mu_1 + \mu_2 + \mu_3 + \mu_4} + q^{\mu_2 + \mu_3 + \mu_4 + \lambda_0} + q^{\mu_3 + \mu_4 + \lambda_0 + \lambda_1} + q^{\mu_4 + \lambda_0 + \lambda_1 + \lambda_2}  \\
&\quad + q^{\lambda_0 + \lambda_1  + \lambda_2 + \lambda_3})\alpha + q^{\mu_0 + \mu_1 + \mu_2 + \mu_3 + \mu_4}\beta \}x \\
&\quad - (q^{\mu_0 + \mu_1 + \mu_2 + \mu_3 + \mu_4} + q^{\mu_1 + \mu_2 + \mu_3 + \mu_4 + \lambda_0} + q^{\mu_2 + \mu_3 + \mu_4 + \lambda_0 + \lambda_1} + q^{\mu_3 + \mu_4 + \lambda_0 + \lambda_1 + \lambda_2} \\
&\quad + q^{\mu_4 + \lambda_0 + \lambda_1 + \lambda_2 + \lambda_3} + q^{\lambda_0 + \lambda_1 + \lambda_2 + \lambda_3 + \lambda_4}) ] ,
\end{align*}
\begin{align*}
&S_4(x)
= q^{\mu_4 - 2(\lambda_0 + \lambda_1 + \lambda_2 + \lambda_3 + \lambda_4)} \\
&\quad \cdot [ \{(q^{\mu_1 + 2\mu_2 + 2\mu_3 + \mu_4 + \lambda_0} + q^{\mu_1 + \mu_2 + 2\mu_3 + \mu_4 + \lambda_0 + \lambda_1} + q^{ \mu_2 + 2\mu_3 + \mu_4 + 2\lambda_0 + \lambda_1}  \\
&\quad + q^{\mu_1 + \mu_2 + \mu_3 + \mu_4 + \lambda_0 + \lambda_1 +\lambda_2} + q^{\mu_2 + \mu_3 + \mu_4 + 2\lambda_0 + \lambda_1 + \lambda_2} + q^{\mu_3 + \mu_4 + 2\lambda_0 + 2\lambda_1 +\lambda_2}  \\
&\quad + q^{\mu_1 + \mu_2 + \mu_3 + \lambda_0 + \lambda_1 + \lambda_2 + \lambda_3} + q^{\mu_2 + \mu_3 + 2\lambda_0 + \lambda_1 + \lambda_2 + \lambda_3} + q^{\mu_3 + 2\lambda_0 + 2\lambda_1 + \lambda_2 + \lambda_3} \\
&\quad + q^{2\lambda_0 + 2\lambda_1 + 2\lambda_2 + \lambda_3}) \alpha + (q^{\mu_0 + 2\mu_1 + 2\mu_2 + 2\mu_3 + \mu_4}+ q^{\mu_0 + \mu_1 + 2\mu_2 + 2\mu_3+ \mu_4 + \lambda_0} \\
&\quad  + q^{\mu_0 + \mu_1 + \mu_2 + 2\mu_3 + \mu_4 + \lambda_0 + \lambda_1} + q^{\mu_0 + \mu_1 + \mu_2 + \mu_3 + \mu_4 + \lambda_0 + \lambda_1 + \lambda_2} \\
&\quad + q^{\mu_0 + \mu_1 + \mu_2 + \mu_3 + \lambda_0 + \lambda_1 + \lambda_2 + \lambda_3} )\beta \} x \\
&\quad - q^{\lambda_0}(q^{\mu_0 + 2\mu_1 + 2\mu_2 + 2\mu_3 + \mu_4} + q^{\mu_0 + \mu_1 + 2\mu_2 + 2\mu_3 + \mu_4 + \lambda_1}  + q^{\mu_0 + \mu_1 + \mu_2 + 2\mu_3 + \mu_4 + \lambda_1 + \lambda_2} \\
&\quad + q^{\mu_0 + \mu_1 + \mu_2 + \mu_3 + \mu_4 + \lambda_1 + \lambda_2 + \lambda_3} + q^{\mu_0 + \mu_1 + \mu_2 + \mu_3 + \lambda_1 + \lambda_2 + \lambda_3 + \lambda_4} + q^{ \mu_1 + 2\mu_2 + 2\mu_3 + \mu_4 + \lambda_0 + \lambda_1} \\
&\quad + q^{\mu_1 + \mu_2 + 2\mu_3 + \mu_4 + \lambda_0 + \lambda_1 + \lambda_2} + q^{\mu_2 + 2\mu_3 + \mu_4 + \lambda_0 + 2\lambda_1 + \lambda_2} + q^{\mu_1 + \mu_2 + \mu_3  + \mu_4 + \lambda_0 + \lambda_1 + \lambda_2 + \lambda_3} \\
&\quad + q^{\mu_2 + \mu_3  + \mu_4 + \lambda_0 + 2\lambda_1 + \lambda_2 + \lambda_3} + q^{\mu_3 + \mu_4 + \lambda_0 + 2\lambda_1 + 2\lambda_2 + \lambda_3} + q^{\mu_1 + \mu_2 + \mu_3 + \lambda_0 + \lambda_1 + \lambda_2 + \lambda_3 + \lambda_4}\\
&\quad + q^{\mu_2 + \mu_3 + \lambda_0 + 2\lambda_1 + \lambda_2 + \lambda_3 + \lambda_4} + q^{\mu_3 + \lambda_0 + 2\lambda_1 + 2\lambda_2 + \lambda_3 + \lambda_4} + q^{\lambda_0 + 2\lambda_1 + 2\lambda_2 + 2\lambda_3 + \lambda_4}) ] ,
\end{align*}
\begin{align*}
&S_3(x)
= - q^{\mu_3 + 2\mu_4 - (2\lambda_0 + 3\lambda_1 + 3\lambda_2 + 3\lambda_3 + 3\lambda_4)} \\
&\quad \cdot [ \{(q^{\mu_1 + 2\mu_2 + 2\mu_3 + \mu_4 + \lambda_0 + \lambda_1} + q^{\mu_1 + 2\mu_2 + \mu_3 + \mu_4 + \lambda_0 + \lambda_1+ \lambda_2} + q^{\mu_1 + \mu_2 + \mu_3 + \mu_4 + \lambda_0 + 2\lambda_1 + \lambda_2} \\
&\quad + q^{\mu_2 + \mu_3 + \mu_4 + 2\lambda_0 + 2\lambda_1 + \lambda_2} + q^{\mu_1 + 2\mu_2 + \mu_3 + \lambda_0 + \lambda_1 + \lambda_2 + \lambda_3} + q^{\mu_1 + \mu_2 + \mu_3 + \lambda_0 + 2\lambda_1 + \lambda_2 + \lambda_3} \\
&\quad + q^{\mu_2 + \mu_3 + 2\lambda_0 + 2\lambda_1 + \lambda_2 + \lambda_3} + q^{\mu_1 + \mu_2 + \lambda_0 + 2\lambda_1 + 2\lambda_2 + \lambda_3} + q^{\mu_2 + 2\lambda_0 + 2\lambda_1 + 2\lambda_2 + \lambda_3} \\
&\quad + q^{2\lambda_0 + 3\lambda_1 + 2\lambda_2 + \lambda_3}) \alpha + (q^{\mu_0 + 2\mu_1 + 3\mu_2 + 2\mu_3 + \mu_4}+ q^{\mu_0 + 2\mu_1 + 2\mu_2 + 2\mu_3+ \mu_4 + \lambda_1} \\
&\quad + q^{\mu_0 + 2\mu_1 + 2\mu_2 + \mu_3 + \mu_4 + \lambda_1 + \lambda_2} + q^{\mu_0 + 2\mu_1 + 2\mu_2 + \mu_3 + \lambda_1 + \lambda_2 + \lambda_3} + q^{\mu_0 + \mu_1 + 2\mu_2 + 2\mu_3 + \mu_4 + \lambda_0 + \lambda_1} \\
&\quad + q^{\mu_0 + \mu_1 + 2\mu_2 + \mu_3 + \mu_4 + \lambda_0 + \lambda_1 + \lambda_2} 
+ q^{\mu_0 + \mu_1 + \mu_2 + \mu_3 + \mu_4 + \lambda_0 + 2\lambda_1 + \lambda_2} \\
&\quad + q^{\mu_0 + \mu_1 + 2\mu_2 + \mu_3 + \lambda_0 + \lambda_1 + \lambda_2 + \lambda_3} \\
&\quad + q^{\mu_0 + \mu_1 + \mu_2 + \mu_3 + \lambda_0 + 2\lambda_1 + \lambda_2 + \lambda_3} + q^{\mu_0 + \mu_1 + \mu_2 + \lambda_0 + 2\lambda_1 + 2\lambda_2 + \lambda_3})\beta \} x \\
&\quad - q^{\lambda_0 + \lambda_1}(q^{\mu_0 + 2\mu_1 + 3\mu_2 + 2\mu_3 + \mu_4} + q^{\mu_0 + 2\mu_1 + 2\mu_2 + 2\mu_3 + \mu_4 + \lambda_2} \\
&\quad + q^{\mu_0 + 2\mu_1 + 2\mu_2 + \mu_3 + \mu_4 + \lambda_2 + \lambda_3} + q^{\mu_0 + 2\mu_1 + 2\mu_2 + \mu_3 + \lambda_2 + \lambda_3 + \lambda_4} + q^{\mu_0 + \mu_1 + 2\mu_2 + 2\mu_3 + \mu_4 + \lambda_1 + \lambda_2} \\
&\quad + q^{\mu_0 +  \mu_1 + 2\mu_2 + \mu_3 + \mu_4 + \lambda_1 + \lambda_2 + \lambda_3} + q^{\mu_0 + \mu_1 + \mu_2 + \mu_3 + \mu_4 + \lambda_1 + 2\lambda_2 + \lambda_3} \\
&\quad + q^{\mu_0 + \mu_1 + 2\mu_2 + \mu_3 + \lambda_1 + \lambda_2 + \lambda_3 + \lambda_4} + q^{\mu_0 + \mu_1 + \mu_2  + \mu_3 + \lambda_1 + 2\lambda_2 + \lambda_3 + \lambda_4} \\
&\quad + q^{\mu_0 + \mu_1  + \mu_2 + \lambda_1 + 2\lambda_2 + 2\lambda_3 + \lambda_4} + q^{\mu_1 + 2\mu_2 + 2\mu_3 + \mu_4 + \lambda_0 + \lambda_1 + \lambda_2} \\
&\quad + q^{\mu_1 + 2\mu_2 + \mu_3 + \mu_4 + \lambda_0 + \lambda_1 + \lambda_2 + \lambda_3} + q^{\mu_1 + \mu_2 + \mu_3+ \mu_4 + \lambda_0 + \lambda_1 + 2\lambda_2 + \lambda_3} + q^{\mu_2 + \mu_3 + \mu_4 + \lambda_0 + 2\lambda_1 + 2\lambda_2 + \lambda_3 } \\
&\quad + q^{\mu_1 + 2\mu_2 + \mu_3 + \lambda_0 + \lambda_1 + \lambda_2 + \lambda_3 + \lambda_4} + q^{\mu_1 + \mu_2 + \mu_3 + \lambda_0 + \lambda_1 + 2\lambda_2 + \lambda_3 + \lambda_4} + q^{\mu_1 + \mu_2 + \lambda_0 + \lambda_1 + 2\lambda_2 + 2\lambda_3 + \lambda_4}\\
&\quad + q^{\mu_2 + \mu_3 + \lambda_0 + 2\lambda_1 + 2\lambda_2 + \lambda_3 + \lambda_4} + q^{\mu_2 + \lambda_0 + 2\lambda_1 + 2\lambda_2 + 2\lambda_3 + \lambda_4} + q^{\lambda_0 + 2\lambda_1 + 3\lambda_2 + 2\lambda_3 + \lambda_4}) ] ,
\end{align*}
\begin{align*}
&S_2(x)
= q^{\mu_2 + 2\mu_3 + 3\mu_4 - (2\lambda_0 + 3\lambda_1 + 4\lambda_2 + 4\lambda_3 + 4\lambda_4 )} \\
&\quad \cdot [ \{ (q^{\mu_1 + \mu_2 + \mu_3 + \mu_4 + \lambda_0 + \lambda_1 + \lambda_2} + q^{\mu_1 + \mu_2 + \mu_3 + \lambda_0 + \lambda_1 + \lambda_2 + \lambda_3} + q^{\mu_1 + \mu_2 + \lambda_0 + \lambda_1 + 2\lambda_2 + \lambda_3} \\
&\quad + q^{\mu_1 + \lambda_0 + 2\lambda_1 + 2\lambda_2 + \lambda_3}+  q^{2\lambda_0 + 2\lambda_1 + 2\lambda_2 + \lambda_3}) \alpha \\
&\quad + (q^{\mu_0 + 2\mu_1 + 2\mu_2 + 2\mu_3 + \mu_4}+ q^{\mu_0 + 2\mu_1 + 2\mu_2 + \mu_3 + \mu_4 + \lambda_2} + q^{\mu_0 + 2\mu_1 + \mu_2 + \mu_3 + \mu_4 + \lambda_1 + \lambda_2} \\
&\quad + q^{\mu_0 + 2\mu_1 + 2\mu_2 + \mu_3 + \lambda_2 + \lambda_3} + q^{\mu_0 + \mu_1 + \mu_2 + \mu_3 + \mu_4 + \lambda_0 + \lambda_1 + \lambda_2} + q^{\mu_0 + 2\mu_1 + \mu_2 + \mu_3 + \lambda_1 + \lambda_2 + \lambda_3} \\
&\quad + q^{\mu_0 + \mu_1 + \mu_2 + \mu_3 + \lambda_0 + \lambda_1 + \lambda_2 + \lambda_3} + q^{\mu_0 + 2\mu_1 + \mu_2 + \lambda_1 + 2\lambda_2 + \lambda_3} + q^{\mu_0 + \mu_1 + \mu_2 + \lambda_0 + \lambda_1 + 2\lambda_2 + \lambda_3} \\
&\quad + q^{\mu_0 + \mu_1 + \lambda_0 + 2\lambda_1 + 2\lambda_2 + \lambda_3})\beta \} x \\
&\quad - q^{\lambda_0 + \lambda_1 + \lambda_2}(q^{\mu_0 + 2\mu_1 + 2\mu_2 + 2\mu_3 + \mu_4} + q^{\mu_0 + 2\mu_1 + 2\mu_2 + \mu_3 + \mu_4 + \lambda_3} \\
&\quad + q^{\mu_0 + 2\mu_1 + \mu_2 + \mu_3 + \mu_4 + \lambda_2 + \lambda_3} + q^{\mu_0 + 2\mu_1 + 2\mu_2 + \mu_3 + \lambda_3 + \lambda_4} + q^{\mu_0 + \mu_1 + \mu_2 + \mu_3 + \mu_4 + \lambda_1 + \lambda_2 + \lambda_3} \\
&\quad + q^{\mu_0 + 2\mu_1 + \mu_2 + \mu_3 + \lambda_2 + \lambda_3 + \lambda_4} + q^{\mu_1 + \mu_2 + \mu_3 + \mu_4 + \lambda_0 + \lambda_1 + \lambda_2 + \lambda_3} \\
&\quad + q^{\mu_0 + \mu_1 + \mu_2 + \mu_3 + \lambda_1 + \lambda_2 + \lambda_3 + \lambda_4} + q^{\mu_0 + 2\mu_1 + \mu_2 + \lambda_2 + 2\lambda_3 + \lambda_4} + q^{\mu_1 + \mu_2 + \mu_3 + \lambda_0 + \lambda_1 + \lambda_2 + \lambda_3 + \lambda_4} \\
&\quad + q^{\mu_0 + \mu_1 + \mu_2 + \lambda_1 + \lambda_2 + 2\lambda_3 + \lambda_4} + q^{\mu_0 + \mu_1 + \lambda_1 + 2\lambda_2 + 2\lambda_3 + \lambda_4} + q^{\mu_1 + \mu_2 + \lambda_0 + \lambda_1 + \lambda_2 + 2\lambda_3 + \lambda_4} \\
&\quad + q^{\mu_1 + \lambda_0 + \lambda_1 + 2\lambda_2 + 2\lambda_3 + \lambda_4} + q^{\lambda_0 + 2\lambda_1 + 2\lambda_2 + 2\lambda_3 + \lambda_4}) ] ,
\end{align*}
\begin{align*}
&S_1(x)
= - q^{\mu_1 + 2\mu_2 + 3\mu_3 + 4\mu_4 - (2\lambda_0 + 3\lambda_1 + 4\lambda_2 + 5\lambda_3 + 5\lambda_4)} \\
&\quad \cdot [ \{ q^{\lambda_0 + \lambda_1 + \lambda_2 + \lambda_3}\alpha + (q^{\mu_0 + \mu_1 + \mu_2 + \mu_3 + \mu_4} + q^{\mu_0 + \mu_1 + \mu_2 + \mu_3 + \lambda_3} + q^{\mu_0 + \mu_1 + \mu_2 + \lambda_2 + \lambda_3} \\
&\quad  + q^{\mu_0 + \mu_1 + \lambda_1 + \lambda_2 + \lambda_3} + q^{\mu_0 + \lambda_0 + \lambda_1 + \lambda_2 + \lambda_3}) \beta \} x \\
&\quad - q^{\lambda_0 + \lambda_1 + \lambda_2 + \lambda_3}(q^{\mu_0 + \mu_1 + \mu_2 + \mu_3 + \mu_4} + q^{\mu_0 + \mu_1 + \mu_2 + \mu_3 + \lambda_4} + q^{\mu_0 + \mu_1 + \mu_2 + \lambda_3 + \lambda_4}  \\
&\quad + q^{\mu_0 + \mu_1 + \lambda_2 + \lambda_3 + \lambda_4} + q^{\mu_0 + \lambda_1 + \lambda_2 + \lambda_3 + \lambda_4}+ q^{\lambda_0 + \lambda_1 + \lambda_2 + \lambda_3 + \lambda_4})] .
\end{align*} 

\section{Proof outlines for Proposition \ref{prop:qhgssol}} \label{sec:prfg1}
\subsection{Outline of the calculation for \eqref{eq:g1-c1}}  $ $  \label{subsec:g1c1} \\
From equations \eqref{eq:g1n-2sol} and \eqref{eq:Klyxi-1}, we obtain
\begin{align}
&g_1^{[n]}(x) 
= x^{-\lambda_{n-2}} \int_0^{\xi_{n-2} \infty} s_{n-2}^{-1} \frac{(q^{\lambda_{n-2} + 1}s_{n-2}/x;q)_{\infty}}{(qs_{n-2}/x;q)_{\infty}} s_{n-2}^{\mu_{n-2}} \cdot s_{n-2}^{-\lambda_{n-3}} \label{eq:g1-c1-app-0} \\
& \quad \cdot \int_0^{\xi_{n-3} \infty} s_{n-3}^{-1} \frac{(q^{\lambda_{n-3} + 1}s_{n-3}/s_{n-2} ;q)_{\infty}}{(qs_{n-3}/s_{n-2}  ;q)_{\infty}} s_{n-3}^{\mu_{n-3}} \cdot s_{n-3}^{-\lambda_{n-4}} 
\cdots
\int_0^{\xi_{1} \infty} s_1^{-1} \frac{(q^{\lambda_1 + 1}s_1/s_2 ;q)_{\infty}}{(q s_1/s_2 ;q)_{\infty}} \nonumber \\
&\cdot s_1^{\mu_1} \cdot s_1^{-\lambda_0}  \int_0^{\xi_{0} \infty} s_0^{\mu_0 - 1} \frac{(q^{\lambda_0 + 1}s_0/s_1, \alpha s_0 ;q)_{\infty}}{(q s_0/s_1, \beta s_0 ;q)_{\infty}} \; d_q s_0 d_q s_1 \cdots d_q s_{n-3} d_q s_{n-2} .  \nonumber
\end{align}
We rewrite equation \eqref{eq:g1-c1-app-0} as a multiple infinite sum by setting $s_i = q^{m_i}\xi_i$, $i = 0, 1, \ldots, n-2$.
Then, introducing new summation indices $M_{n-2} = m_{n-2}$ and $M_i = m_i - m_{i+1}$ for $i = 0, \ldots, n-3$, we have
\begin{align}
g_1^{[n]}(x) &= (1 - q)^{n-1} q^{-\mu_0 L_{n-2} - \sum_{j=1}^{n-2}(\mu_j - \lambda_{j-1})T_j}
\; x^{\widetilde{M}_{n-2}- L_{n-2}}  \label{eq:g1-c1-1} \\
&\quad \cdot  
\sum_{M_{n-2} = 0}^{\infty} \sum_{M_{n-3} = 0}^{\infty} \cdots \sum_{M_{1} = 0}^{\infty} \sum_{M_{0} = 0}^{\infty} \nonumber \\
&\quad \cdot (q^{\mu_{n-2} - \lambda_{n-3}})^{M_{n-2}} (q^{\mu_{n-3} - \lambda_{n-4}})^{M_{n-3} + M_{n-2}} \nonumber \\
&\qquad\qquad \cdots (q^{\mu_{2} - \lambda_{1}})^{M_{2} + \cdots + M_{n-2}} (q^{\mu_{1} - \lambda_{0}})^{M_{1} + \cdots + M_{n-2}} (q^{\mu_0})^{M_{0} + \cdots + M_{n-2}} \nonumber \\ 
&\quad \cdot \frac{(q^{M_{n-2}+1}, q^{M_{n-3}+1}, \ldots, q^{M_1+1}, q^{M_0+1} ;q)_{\infty}}{(q^{-\lambda_{n-2}+M_{n-2}+1}, q^{-\lambda_{n-3}+M_{n-3}+1}, \ldots, q^{-\lambda_{1}+M_1+1}, q^{-\lambda_{0}+M_0+1} ;q)_{\infty}} \nonumber \\
&\quad \cdot \frac{(q^{-L_{n-2} + \sum_{j=0}^{n-2}M_j} \alpha x ;q)_{\infty}}{(q^{-L_{n-2} + \sum_{j=0}^{n-2}M_j } \beta x ;q)_{\infty}} . \nonumber
\end{align}
By the $q$-binomial theorem, we have
\begin{equation*}
\frac{(q^{-L_{n-2} + \sum_{j=0}^{n-2} M_j} \alpha x ;q)_{\infty}}{(q^{-L_{n-2} + \sum_{j=0}^{n-2} M_j} \beta x ;q)_{\infty}} 
= \sum_{N = 0}^{\infty} \frac{(\alpha/\beta ;q)_{N}}{(q ;q)_{N}} (q^{-L_{n-2} + \sum_{j=0}^{n-2} M_j} \beta x )^N,
\end{equation*}
if $|q^{-L_{n-2}} \beta x | < 1$.
Hence, equation \eqref{eq:g1-c1-1} is given by
\begin{align}
&g_1^{[n]}(x) 
= (1 - q)^{n-1} q^{-\mu_0 L_{n-2} - \sum_{j=1}^{n-2}(\mu_j - \lambda_{j-1})T_j} \; x^{\widetilde{M}_{n-2} - L_{n-2}} \label{eq:g1-c1-2} \\
&\quad \cdot \frac{(\overbrace{q, q, \ldots , q}^{n-1}   ; q)_{\infty}}{(q^{-\lambda_0 + 1} , q^{-\lambda_1 + 1} , \ldots , q^{-\lambda_{n-2} + 1} ; q)_{\infty}} \nonumber \\
&\quad \cdot \sum_{N = 0}^{\infty} \frac{(\alpha/\beta ;q)_{N}}{(q ;q)_{N}} (q^{-L_{n-2}}\beta x )^N \nonumber \\
&\qquad \cdot \left( \sum_{M_{0} = 0}^{\infty} \frac{(q^{-\lambda_{0}+1} ;q)_{M_{0}}}{(q ;q)_{M_{0}}} (q^{\mu_0 + N})^{M_{0}} \right) \left( \sum_{M_{1} = 0}^{\infty} \frac{(q^{-\lambda_{1}+1} ;q)_{M_{1}}}{(q ;q)_{M_{1}}} (q^{\mu_0 + \mu_1 - \lambda_0 + N})^{M_{1}} \right) \nonumber \\
&\qquad \cdots \left( \sum_{M_{n-3} = 0}^{\infty} \frac{(q^{-\lambda_{n-3}+1} ;q)_{M_{n-3}}}{(q ;q)_{M_{n-3}}} (q^{\mu_0 + \cdots + \mu_{n-3} - (\lambda_0 + \cdots + \lambda_{n-4}) + N})^{M_{n-3}} \right) \nonumber \\
&\qquad \cdot \left( \sum_{M_{n-2} = 0}^{\infty} \frac{(q^{-\lambda_{n-2}+1} ;q)_{M_{n-2}}}{(q ;q)_{M_{n-2}}} (q^{\mu_0 + \cdots + \mu_{n-2} - (\lambda_0 + \cdots + \lambda_{n-3}) + N})^{M_{n-2}} \right) . \nonumber  
\end{align}
By the $q$-binomial theorem, equation \eqref{eq:g1-c1-2} can be written as 
\begin{align}
&g_1^{[n]}(x) 
= (1 - q)^{n-1} q^{-\mu_0 L_{n-2} - \sum_{j=1}^{n-2}(\mu_j - \lambda_{j-1})T_j} \; x^{\widetilde{M}_{n-2} - L_{n-2}} \label{eq:g1-c1-3} \\
&\quad \cdot \frac{(\overbrace{q, q, \ldots , q}^{n-1}   ; q)_{\infty}}{(q^{-\lambda_0 + 1} , q^{-\lambda_1 + 1} , \ldots , q^{- \lambda_{n-2} + 1} ; q)_{\infty}} 
\sum_{N = 0}^{\infty}  
\prod_{j=0}^{n-2} \frac{(q^{\widetilde{M}_j - L_j + N + 1} ;q)_{\infty}}{(q^{\widetilde{M}_j - L_{j-1}  + N} ;q)_{\infty}} \nonumber \\
&\qquad \cdot \frac{(\alpha/\beta ;q)_{N}}{(q ;q)_{N}} (q^{-L_{n-2}}\beta x )^N . \nonumber
\end{align}
Equation \eqref{eq:g1-c1-3} can be rewritten as
\begin{align*}
&g_1^{[n]}(x)  = (1 - q)^{n-1} q^{-\mu_0 L_{n-2} - \sum_{j=1}^{n-2}(\mu_j - \lambda_{j-1})T_j} \; x^{\widetilde{M}_{n-2} - L_{n-2}} \\
&\quad \cdot \frac{(\overbrace{q, q, \ldots , q}^{n-1}   ; q)_{\infty}}{(q^{-\lambda_0 + 1} , q^{-\lambda_1 + 1} , \ldots , q^{- \lambda_{n-2} + 1} ; q)_{\infty}} 
\frac{(\bar{b}_1,\ldots,\bar{b}_{n-1};q)_{\infty}}{(\bar{a}_1,\ldots,\bar{a}_{n-1};q)_{\infty}} \\
&\quad \cdot \sum_{N = 0}^{\infty}  \frac{(\alpha/\beta, \bar{a}_1,\ldots,\bar{a}_{n-1};q)_{N}}{(q, \bar{b}_1,\ldots,\bar{b}_{n-1};q)_{N}}
(q^{-L_{n-2}}\beta x )^N , \nonumber
\end{align*}
where
\[
\bar{a}_k = q^{\widetilde{M}_{k-1}-L_{k-2}}, \qquad
\bar{b}_k = q^{\widetilde{M}_{k-1}-L_{k-1}+1}, \qquad (k = 1,\ldots,n-1).
\]
Thus, we obtain equation \eqref{eq:g1-c1}.

\subsection{Outline of the calculation for \eqref{eq:g1-c3}}  $ $ \label{subsec:g1c3} \\
From equations \eqref{eq:g1n-2sol} and \eqref{eq:Klyxi-3}, we obtain
\begin{align}
&g_1^{[n]}(x) 
= \int_0^{\xi_{n-2} \infty} s_{n-2}^{-\lambda_{n-2} - 1} \frac{(x/s_{n-2};q)_{\infty}}{(q^{-\lambda_{n-2}}x/s_{n-2};q)_{\infty}} s_{n-2}^{\mu_{n-2}} \label{eq:g1-c3-app-0}  \\
& \quad \cdot \int_0^{\xi_{n-3} \infty} s_{n-3}^{-\lambda_{n-3} - 1} \frac{(s_{n-2}/s_{n-3} ;q)_{\infty}}{(q^{-\lambda_{n-3}}s_{n-2}/s_{n-3};q)_{\infty}} s_{n-3}^{\mu_{n-3}} 
\cdots
\int_0^{\xi_{1} \infty} s_{1}^{-\lambda_{1} - 1} \frac{(s_2/s_1 ;q)_{\infty}}{(q^{-\lambda_1} s_2/s_1 ;q)_{\infty}} \nonumber \\
&\quad \cdot s_1^{\mu_1}  \int_0^{\xi_{0} \infty} s_{0}^{\widetilde{\mu}-\lambda_{0} - 1} \frac{(s_1/s_0, q/(\beta s_0) ;q)_{\infty}}{(q^{-\lambda_0} s_1/s_0, q/(\alpha s_0) ;q)_{\infty}} \; d_q s_0 d_q s_1 \cdots d_q s_{n-3} d_q s_{n-2} . \nonumber
\end{align}
We rewrite equation \eqref{eq:g1-c3-app-0} as a multiple infinite sum by setting $s_i = q^{m_i}\xi_i$, $i = 0, 1, \ldots, n-2$.
Then, introducing new summation indices $M_{n-2} = -m_{n-2}$ and $M_i = m_{i+1} - m_{i}$ for $i = 0, \ldots, n-3$, we have
\begin{align}
&g_1^{[n]}(x) = (1 - q)^{n-1} q^{\sum_{j=0}^{n-2}(n-1-j)(-\mu_j + \lambda_j)} 
 \left( \frac{\alpha}{\beta} \right)^{n-1}  x^{\widetilde{\mu} + \sum_{j=1}^{n-2}\mu_j - L_{n-2}} \label{eq:g1-c3-1} \\
&\quad \cdot  
\sum_{M_{n-2} = 0}^{\infty} \sum_{M_{n-3} = 0}^{\infty} \cdots \sum_{M_{1} = 0}^{\infty} \sum_{M_{0} = 0}^{\infty} \nonumber \\
&\quad \cdot (q^{-(\mu_{n-2} - \lambda_{n-2})})^{M_{n-2}} (q^{-(\mu_{n-3} - \lambda_{n-3})})^{M_{n-3} + M_{n-2}} \nonumber \\
&\qquad \cdots (q^{-(\mu_{2} - \lambda_{2})})^{M_{2} + \cdots + M_{n-2}} (q^{-(\mu_{1} - \lambda_{1})})^{M_{1} + \cdots + M_{n-2}} (q^{-(\mu_0-\lambda_0)}\alpha/\beta)^{M_{0} + \cdots + M_{n-2}} \nonumber \\ 
&\quad \cdot \frac{(q^{M_{n-2}+1}, q^{M_{n-3}+1}, \ldots, q^{M_1+1}, q^{M_0+1} ;q)_{\infty}}{(q^{-\lambda_{n-2}+M_{n-2}+1}, q^{-\lambda_{n-3}+M_{n-3}+1}, \ldots, q^{-\lambda_{1}+M_1+1}, q^{-\lambda_{0}+M_0+1} ;q)_{\infty}} \nonumber \\
&\quad \cdot \frac{(q^{n + \sum_{j=0}^{n-2} M_j}/(\beta x) ;q)_{\infty}}{(q^{n + \sum_{j=0}^{n-2} M_j }/(\alpha x) ;q)_{\infty}} . \nonumber
\end{align}
By the $q$-binomial theorem, we have
\begin{equation*}
\frac{(q^{n + \sum_{j=0}^{n-2} M_j }/(\beta x) ;q)_{\infty}}{(q^{n + \sum_{j=0}^{n-2} M_j }/(\alpha x) ;q)_{\infty}} 
= \sum_{N = 0}^{\infty} \frac{(\alpha/\beta ;q)_{N}}{(q ;q)_{N}} (q^{n + \sum_{j=0}^{n-2} M_j }/(\alpha x) )^N, 
\end{equation*}
if $|q^{n}/(\alpha x) | < 1$.
Hence, equation \eqref{eq:g1-c3-1} is given by
\begin{align}
&g_1^{[n]}(x) 
= (1 - q)^{n-1} q^{\sum_{j=0}^{n-2}(n-1-j)(-\mu_j + \lambda_j)}  
\left( \frac{\alpha}{\beta} \right)^{n-1} x^{\widetilde{\mu} + \sum_{j=1}^{n-2}\mu_j - L_{n-2}} \label{eq:g1-c3-2} \\
&\quad \cdot \frac{(\overbrace{q, q, \ldots , q}^{n-1}   ; q)_{\infty}}{(q^{-\lambda_0 + 1} , q^{-\lambda_1 + 1} , \ldots , q^{-\lambda_{n-2} + 1} ; q)_{\infty}} \nonumber \\
&\quad \cdot \sum_{N = 0}^{\infty} \frac{(\alpha/\beta ;q)_{N}}{(q ;q)_{N}} (q^{n}/(\alpha x) )^N \nonumber \\
&\qquad \cdot \left( \sum_{M_{0} = 0}^{\infty} \frac{(q^{-\lambda_{0}+1} ;q)_{M_{0}}}{(q ;q)_{M_{0}}} (q^{-\mu_0 + \lambda_0 + N}\alpha/\beta)^{M_{0}} \right) \nonumber \\
&\qquad \cdot \left( \sum_{M_{1} = 0}^{\infty} \frac{(q^{-\lambda_{1}+1} ;q)_{M_{1}}}{(q ;q)_{M_{1}}} (q^{-(\mu_0 + \mu_1) + \lambda_0 + \lambda_1 + N}\alpha/\beta)^{M_{1}} \right) \nonumber \\
&\qquad \cdots \left( \sum_{M_{n-3} = 0}^{\infty} \frac{(q^{-\lambda_{n-3}+1} ;q)_{M_{n-3}}}{(q ;q)_{M_{n-3}}} (q^{-\widetilde{M}_{n-3} + L_{n-3} + N}\alpha/\beta)^{M_{n-3}} \right) \nonumber \\
&\qquad \cdot \left( \sum_{M_{n-2} = 0}^{\infty} \frac{(q^{-\lambda_{n-2}+1} ;q)_{M_{n-2}}}{(q ;q)_{M_{n-2}}} (q^{-\widetilde{M}_{n-2} + L_{n-2} + N}\alpha/\beta)^{M_{n-2}} \right) . \nonumber  
\end{align}
By the $q$-binomial theorem, equation \eqref{eq:g1-c3-2} can be written as 
\begin{align}
&g_1^{[n]}(x) = (1 - q)^{n-1} q^{\sum_{j=0}^{n-2}(n-1-j)(-\mu_j + \lambda_j)}   
\left( \frac{\alpha}{\beta} \right)^{n-1} x^{\widetilde{\mu} + \sum_{j=1}^{n-2}\mu_j - L_{n-2}} \label{eq:g1-c3-3} \\
&\quad \cdot \frac{(\overbrace{q, \ldots , q}^{n-1} ; q)_{\infty}}{(q^{-\lambda_0 + 1} , q^{-\lambda_1 + 1} , \ldots , q^{- \lambda_{n-2} + 1} ; q)_{\infty}} \nonumber \\
&\quad \cdot \sum_{N = 0}^{\infty} 
\prod_{j=0}^{n-2} \frac{(q^{-\widetilde{M}_j + L_{j-1} + N + 1}\alpha/\beta ;q)_{\infty}}{(q^{-\widetilde{M}_j + L_{j} + N}\alpha/\beta ;q)_{\infty}} \frac{(\alpha/\beta ;q)_{N}}{(q ;q)_{N}} (q^{n }/(\alpha x) )^N  . \nonumber 
\end{align}
Equation \eqref{eq:g1-c3-3} can be rewritten as
\begin{align*}
&g_1^{[n]}(x) = (1 - q)^{n-1} q^{\sum_{j=o}^{n-2}(n-1-j)(-\mu_j + \lambda_j)} \left( \frac{\alpha}{\beta} \right)^{n-1} x^{\widetilde{\mu} + \sum_{j=1}^{n-2}\mu_j - L_{n-2}} \\
&\quad \cdot \frac{(\overbrace{q, q, \ldots , q}^{n-1}; q)_{\infty}}{(q^{-\lambda_0 + 1} , q^{-\lambda_1 + 1} , \ldots , q^{- \lambda_{n-2} + 1} ; q)_{\infty}} 
\frac{(\bar{b}_1,\ldots,\bar{b}_{n-1};q)_{\infty}}{(\bar{a}_1,\ldots,\bar{a}_{n-1};q)_{\infty}} \\
&\quad \cdot \sum_{N = 0}^{\infty}  \frac{(\alpha/\beta, \bar{a}_1,\ldots,\bar{a}_{n-1};q)_{N}}{(q, \bar{b}_1,\ldots,\bar{b}_{n-1};q)_{N}}
(q^{n }/(\alpha x) )^N , 
\end{align*}
where
\begin{align*}
& \bar{a}_k = q^{-\widetilde{M}_{k-1} + L_{k-1}} \alpha/\beta, \quad 1\le k \le n-1, \\
& \bar{b}_k = q^{-\widetilde{M}_{k-1} + L_{k-2} + 1} \alpha/\beta, \quad 1 \le k \le n-1.
\end{align*}
Thus, we obtain equation \eqref{eq:g1-c3}.

\subsection{Outline of the calculation for \eqref{eq:g1-c4}}  $ $ \label{subsec:g1c4} \\
We first consider the case $1 \le r \le n-3$ for $n \ge 4$.
From equations \eqref{eq:g1n-2sol} and \eqref{eq:Klyxi-4}, we obtain
\begin{align}
&g_1^{[n]}(x) 
= \int_0^{\xi_{n-2} \infty} s_{n-2}^{-\lambda_{n-2} - 1} \frac{(x/s_{n-2};q)_{\infty}}{(q^{-\lambda_{n-2}}x/s_{n-2};q)_{\infty}} s_{n-2}^{\mu_{n-2}} \label{eq:g1-c4-app-0} \\
& \quad \cdot \int_0^{\xi_{n-3} \infty} s_{n-3}^{-\lambda_{n-3} - 1} \frac{(s_{n-2}/s_{n-3} ;q)_{\infty}}{(q^{-\lambda_{n-3}}s_{n-2}/s_{n-3};q)_{\infty}} s_{n-3}^{\mu_{n-3}} 
\cdots
\int_0^{\xi_{r+1} \infty} s_{r+1}^{-\lambda_{r+1} - 1} \frac{(s_{r+2}/s_{r+1} ;q)_{\infty}}{(q^{-\lambda_{r+1}}s_{r+2}/s_{r+1};q)_{\infty}} \nonumber \\
&\quad \cdot s_{r+1}^{\mu_{r+1}} \cdot s_{r+1}^{-\lambda_{r}}
\int_0^{\xi_{r} \infty} s_{r}^{-1} \frac{(q^{\lambda_{r} + 1}s_{r}/s_{r+1} ;q)_{\infty}}{(q s_{r}/s_{r+1};q)_{\infty}} s_{r}^{\mu_{r}} 
\int_0^{\xi_{r-1} \infty} s_{r-1}^{-\lambda_{r-1} - 1} \frac{(s_{r}/s_{r-1} ;q)_{\infty}}{(q^{-\lambda_{r-1}}s_{r}/s_{r-1};q)_{\infty}} s_{r-1}^{\mu_{r-1}} \nonumber \\
&\quad \cdots \int_0^{\xi_{1} \infty} s_{1}^{-\lambda_{1} - 1} \frac{(s_2/s_1 ;q)_{\infty}}{(q^{-\lambda_1} s_2/s_1 ;q)_{\infty}} 
s_1^{\mu_1}  \int_0^{\xi_{0} \infty} s_{0}^{\mu_0-\lambda_{0} - 1} \frac{(s_1/s_0, \alpha s_0 ;q)_{\infty}}{(q^{-\lambda_0} s_1/s_0, \beta s_0 ;q)_{\infty}} \nonumber \\
& \hspace{280pt} \cdot d_q s_0 d_q s_1 \cdots d_q s_{n-3} d_q s_{n-2} .  \nonumber
\end{align}
We rewrite equation \eqref{eq:g1-c4-app-0} as a multiple infinite sum by setting $s_i = q^{m_i}\xi_i$, $i = 0, 1, \ldots, n-2$.
Then, introducing new summation indices $M_0 = m_0$, $M_{n-2} = -m_{n-2}$, $M_i = m_{i} - m_{i-1}$ for $i = 1, \ldots, r$, and $M_j = m_{j+1} - m_j$ for $j = r+1, \ldots, n-3$, we have
\begin{align}
&g_1^{[n]}(x)  = (1 - q)^{n-1} q^{F_r} \; \alpha^{-\widetilde{M}_r + L_{r-1}} \; x^{\sum_{j=r+1}^{n-2}\mu_j - T_r} \label{eq:g1-c4-1} \\
&\quad \cdot  
\sum_{M_{n-2} = 0}^{\infty} \sum_{M_{n-3} = 0}^{\infty} \cdots \sum_{M_{r+1} = 0}^{\infty} \sum_{M_{r} = 0}^{\infty} \sum_{M_{r-1} = 0}^{\infty} \cdots \sum_{M_{1} = 0}^{\infty} \sum_{M_{0} = 0}^{\infty} \nonumber \\
&\quad \cdot (q^{\mu_{n-2} - \lambda_{n-2}})^{-M_{n-2}} (q^{\mu_{n-3} - \lambda_{n-3}})^{-(M_{n-3} + M_{n-2})} \cdots (q^{\mu_{r+2} - \lambda_{r+2}})^{-(M_{r+2} + \cdots + M_{n-2})} \nonumber \\
& \quad \cdot (q^{\mu_{r+1} - (\lambda_{r} + \lambda_{r+1})})^{-(M_{r+1} + \cdots + M_{n-2})} \nonumber \\
& \quad \cdot (q^{\mu_{r}})^{M_0 + \cdots + M_{r}}
(q^{\mu_{r-1} - \lambda_{r-1}})^{M_{0} + \cdots + M_{r-1}} 
\cdots (q^{\mu_{1} - \lambda_{1}})^{M_{0} + M_{1}} (q^{\mu_0 - \lambda_0})^{M_{0}} \nonumber \\ 
&\quad \cdot \frac{(q^{M_{n-2}+1}, \ldots, q^{M_{r+1}+1}, q^{M_{r-1}+1}, \ldots, q^{M_0+1} ;q)_{\infty}}{(q^{-\lambda_{n-2}+M_{n-2}+1}, \ldots, q^{-\lambda_{r+1}+M_{r+1}+1}, q^{-\lambda_{r-1}+M_{r-1}+1}, \ldots, q^{-\lambda_{0}+M_0+1} ;q)_{\infty}} \nonumber \\
&\quad \cdot \frac{(q^{\lambda_r + n + \sum_{j=0}^{n-2}M_j}/(\alpha x) ;q)_{\infty}}{(q^{n + \sum_{j=0}^{n-2}M_j } /(\alpha x) ;q)_{\infty}} . \nonumber
\end{align}
By the $q$-binomial theorem, we have
\begin{equation*}
\frac{(q^{\lambda_r + n + \sum_{j=0}^{n-2}M_j }/(\alpha x) ;q)_{\infty}}{(q^{n + \sum_{j=0}^{n-2}M_j }/(\alpha x) ;q)_{\infty}}
= \sum_{N = 0}^{\infty} \frac{(q^{\lambda_r} ;q)_{N}}{(q ;q)_{N}} (q^{n + \sum_{j=0}^{n-2}M_j }/(\alpha x) )^N, 
\end{equation*}
if $|q^{n}/(\alpha x) | < 1$.
Hence, the fourth and subsequent lines of equation \eqref{eq:g1-c4-1} are given by
\begin{align}
&\frac{(\overbrace{q, q, \ldots , q}^{n-1}; q)_{\infty}}{(q\beta/\alpha, q^{-\lambda_0 + 1} , q^{-\lambda_1 + 1} , \ldots , q^{-\lambda_{r-1} + 1}, q^{-\lambda_{r+1} + 1}, \ldots, q^{-\lambda_{n-2} + 1} ; q)_{\infty}} \label{eq:g1-c4-2}  \\
&\cdot \sum_{N = 0}^{\infty} \frac{(q^{\lambda_r} ;q)_{N}}{(q ;q)_{N}} (q^{n}/(\alpha x) )^N \nonumber \\
&\qquad \cdot \left( \sum_{M_{0} = 0}^{\infty} \frac{(q\beta/\alpha ;q)_{M_{0}}}{(q ;q)_{M_{0}}} (q^{\widetilde{M}_r  - L_{r-1}+ N})^{M_{0}} \right) \nonumber \\
&\qquad \cdot \left( \sum_{M_{1} = 0}^{\infty} \frac{(q^{-\lambda_{0}+1} ;q)_{M_{1}}}{(q ;q)_{M_{1}}} (q^{\mu_1 + \cdots + \mu_r - (\lambda_1 + \cdots + \lambda_{r-1})+ N} )^{M_{1}} \right) \nonumber \\
&\qquad \cdots \left( \sum_{M_{r-1} = 0}^{\infty} \frac{(q^{-\lambda_{r-2}+1} ;q)_{M_{r-1}}}{(q ;q)_{M_{r-1}}} (q^{\mu_{r-1} + \mu_r - \lambda_{r-1}+ N} )^{M_{r-1}} \right) \nonumber \\
&\qquad \cdot \left( \sum_{M_{r} = 0}^{\infty} \frac{(q^{-\lambda_{r-1}+1} ;q)_{M_{r}}}{(q ;q)_{M_{r}}} (q^{\mu_r + N} )^{M_{r}} \right) \nonumber \\
&\qquad \cdot \left( \sum_{M_{r+1} = 0}^{\infty} \frac{(q^{-\lambda_{r+1}+1} ;q)_{M_{r+1}}}{(q ;q)_{M_{r+1}}} (q^{-\mu_{r+1} + \lambda_r + \lambda_{r+1} + N} )^{M_{r+1}} \right) \nonumber \\
&\qquad \cdots \left( \sum_{M_{n-2} = 0}^{\infty} \frac{(q^{-\lambda_{n-2}+1} ;q)_{M_{n-2}}}{(q ;q)_{M_{n-2}}} (q^{-(\mu_{r+1} + \cdots + \mu_{n-2}) + \lambda_r + \cdots + \lambda_{n-2} + N})^{M_{n-2}} \right) . \nonumber  
\end{align}
By the $q$-binomial theorem, equation \eqref{eq:g1-c4-2} can be written as 
\begin{align}
& \frac{(\overbrace{q, q, \ldots , q}^{n-1}   ; q)_{\infty}}{(q\beta/\alpha, q^{-\lambda_0 + 1} , q^{-\lambda_1 + 1} , \ldots , q^{-\lambda_{r-1} + 1}, q^{-\lambda_{r+1} + 1}, \ldots, q^{-\lambda_{n-2} + 1} ; q)_{\infty}} \label{eq:g1-c4-3}\\
&\sum_{N = 0}^{\infty}  
\frac{(q^{\mu_0 + \cdots + \mu_{r} - (\lambda_0 + \cdots + \lambda_{r-1}) + N + 1} \beta/\alpha ; q)_{\infty}}{(q^{\mu_0 + \cdots + \mu_{r} - (\lambda_0 + \cdots + \lambda_{r-1}) + N} ; q)_{\infty}}
\frac{(q^{\mu_1 + \cdots + \mu_{r} - (\lambda_0 + \cdots + \lambda_{r-1}) + N + 1} ; q)_{\infty}}{(q^{\mu_1 + \cdots + \mu_{r} - (\lambda_1 + \cdots + \lambda_{r-1}) + N} ; q)_{\infty}} \nonumber \\
&\quad \cdots \frac{(q^{\mu_{r-1} + \mu_{r} - (\lambda_{r-2} + \lambda_{r-1}) + N + 1} ; q)_{\infty}}{(q^{\mu_{r-1} + \mu_{r} - \lambda_{r-1} + N} ; q)_{\infty}}
\frac{(q^{\mu_{r} - \lambda_{r-1} + N + 1} ; q)_{\infty}}{(q^{\mu_{r} + N} ; q)_{\infty}} \nonumber \\
&\quad \cdot\frac{(q^{-\mu_{r+1} + \lambda_{r} + N + 1} ;q)_{\infty}}{(q^{-\mu_{r+1} + \lambda_{r} + \lambda_{r+1} + N} ;q)_{\infty}}
\frac{(q^{-(\mu_{r+1} + \mu_{r+2}) + \lambda_{r} + \lambda_{r+1} + N + 1} ;q)_{\infty}}{(q^{-(\mu_{r+1} + \mu_{r+2}) + \lambda_{r} + \lambda_{r+1} + \lambda_{r+2} + N} ;q)_{\infty}} \nonumber \\
&\quad \cdots 
\frac{(q^{-(\mu_{r+1} + \cdots + \mu_{n-2}) + \lambda_{r} + \cdots + \lambda_{n-3} + N + 1} ;q)_{\infty}}{(q^{-(\mu_{r+1} + \cdots + \mu_{n-2}) + \lambda_{r} + \cdots + \lambda_{n-2} + N} ;q)_{\infty}} 
\frac{(q^{\lambda_{r}} ;q)_{N}}{(q ;q)_{N}} (q^{n }/(\alpha x) )^N .  \nonumber
\end{align}
Equation \eqref{eq:g1-c4-3} can be rewritten as
\begin{align*}
& \frac{(\overbrace{q, q, \ldots , q}^{n-1}   ; q)_{\infty}}{(q\beta/\alpha, q^{-\lambda_0 + 1} , q^{-\lambda_1 + 1} , \ldots , q^{-\lambda_{r-1} + 1}, q^{-\lambda_{r+1} + 1}, \ldots, q^{-\lambda_{n-2} + 1} ; q)_{\infty}}  \\
&\cdot \frac{(\bar{b}_1,\ldots,\bar{b}_{n-1};q)_{\infty}}{(\bar{a}_1,\ldots,\bar{a}_{n-1};q)_{\infty}}  \;
\sum_{N = 0}^{\infty}  \frac{(q^{\lambda_{r}}, \bar{a}_1,\ldots,\bar{a}_{n-1};q)_{N}}{(q, \bar{b}_1,\ldots,\bar{b}_{n-1};q)_{N}}
(q^{n }/(\alpha x) )^N , 
\end{align*}
where
\begin{align*}
&\bar{a}_k=
\begin{cases}
q^{\sum_{j=k-1}^{r}\mu_j - \sum_{j=k-1}^{r-1}\lambda_j}, &1\le k \le r+1,\\
q^{-\sum_{j=r+1}^{k-1}\mu_j + \sum_{j=r}^{k-1}\lambda_j}, &r+2\le k \le n-1,
\end{cases} \\
&\bar{b}_1
= q^{\widetilde{M}_r - L_{r-1} + 1} \beta/\alpha, \\
&\bar{b}_k=
\begin{cases}
q^{\sum_{j=k-1}^{r}\mu_j - \sum_{j=k-2}^{r-1}\lambda_j + 1}, &2\le k \le r+1,\\
q^{-\sum_{j=r+1}^{k-1}\mu_j + \sum_{j=r}^{k-2}\lambda_j + 1}, &r+2\le k \le n-1.
\end{cases}
\end{align*}
By combining with \eqref{eq:g1-c4-1}, we obtain equation \eqref{eq:g1-c4}.
The cases $r = 0$ and $r = n-2$ can be treated similarly.

\section{$q$-Hypergeometric series solutions} \label{sec:exqhgs}
\subsection{$q$-hypergeometric series solutions to equation \eqref{eq:111-21-111-g1}} $ $ \label{subsec:qhgs3} \\
\ref{case:1}: The following solution coincides with equation \eqref{eq:g1qlalaxqlax}.
\begin{align*}
g^{[3]}_1(x) 
&= (1 - q)^2 q^{ -\mu_0(\lambda_0 + \lambda_1) - (\mu_1 - \lambda_0) \lambda_1} \; x^{\mu_0 + \mu_1 - (\lambda_1 + \lambda_0) }  \\
&\qquad \cdot \frac{(q, q, q^{\mu_0 -\lambda_0 + 1} , q^{\mu_0 +\mu_1 - (\lambda_0 + \lambda_1) + 1 } ; q)_{\infty}}{(q^{-\lambda_0 + 1} , q^{-\lambda_1 + 1} , q^{\mu_0 } , q^{\mu_0 + \mu_1 - \lambda_0 } ; q)_{\infty}} \\
&\qquad\qquad \cdot 
{}_3\phi_2 \Bigl( { \alpha /\beta , q^{\mu_0 }, q^{\mu_0 + \mu_1  - \lambda_0 } \atop{q^{\mu_0 -\lambda_0 + 1} , q^{\mu_0 +\mu_1  - (\lambda_0 + \lambda_1) + 1 }}} ; q, q^{ -(\lambda_0 + \lambda_1) } \beta x \Bigr).
\end{align*}
\ref{case:2} $(r = 0)$ :
\begin{align*}
&g^{[3]}_1(x) = (1 - q)^2 q^{- \mu_1 \lambda_1} \; \beta^{- \widetilde{\mu} + \lambda_0} \; x^{\mu_1 - \lambda_1} 
\frac{(q, q, q^{- \mu_0 + \lambda_0 + 1 }, q^{\mu_1 - \lambda_1 + 1}  ; q)_{\infty}}{(q\beta/\alpha, q^{-\lambda_1 + 1}, q^{- \mu_0  + \lambda_0} \alpha/\beta, q^{ \mu_1 }  ; q)_{\infty}}  \\
&\hspace{120pt}  \cdot {}_3\phi_2 \Bigl( {q^{\lambda_0} , q^{- \mu_0 + \lambda_0} \alpha /\beta, q^{ \mu_1}  \atop{q^{- \mu_0 + \lambda_0 + 1 }, q^{ \mu_1 - \lambda_1 + 1} }} ; q, q^{ -(\lambda_0 + \lambda_1) } \beta x \Bigr) . 
\end{align*}
\ref{case:2} $(r = 1)$ :
\begin{align*}
g^{[3]}_1(x)  
&= (1 - q)^2 q^{\lambda_0 (\mu_1 -(\lambda_0 + \lambda_1))} \; \beta^{- (\widetilde{\mu} + \mu_1) + \lambda_0 + \lambda_1}  \\
&\qquad \frac{(q, q, q^{- (\mu_0 + \mu_1) + \lambda_0 + \lambda_1 + 1}, q^{-\mu_1 + \lambda_1 + 1} ; q)_{\infty}}{(q\beta/\alpha , q^{-\lambda_0 + 1} , q^{- (\mu_0 + \mu_1) + \lambda_0 + \lambda_1} \alpha/\beta, q^{- \mu_1 + \lambda_0 + \lambda_1} ; q)_{\infty}} \\
&\qquad\qquad \cdot {}_3\phi_2 \Bigl( {q^{\lambda_1 }, q^{- (\mu_0 + \mu_1) + \lambda_0 + \lambda_1} \alpha /\beta , q^{- \mu_1 + \lambda_0 + \lambda_1} \atop{ q^{- (\mu_0 + \mu_1) + \lambda_0 + \lambda_1 + 1 }, q^{- \mu_1 + \lambda_1 + 1}}} ; q, q^{ -(\lambda_0 + \lambda_1) } \beta x \Bigr) . 
\end{align*}
\ref{case:3} :
\begin{align*}
&g^{[3]}_1(x)  = (1 - q)^2 q^{-(2\mu_0 + \mu_1) + 2\lambda_0 + \lambda_1}  \left( \frac{\alpha}{\beta} \right)^2 x^{ \widetilde{\mu} + \mu_1 - (\lambda_0 + \lambda_1)} \\
&\qquad \frac{(q, q, q^{ -\mu_0 + 1 }\alpha/\beta, q^{-(\mu_0 + \mu_1) + \lambda_0 + 1} \alpha/\beta ; q)_{\infty}}{(q^{-\lambda_0 + 1} , q^{-\lambda_1 + 1} , q^{ -\mu_0 + \lambda_0}\alpha/\beta, q^{-(\mu_0 + \mu_1) + \lambda_0 + \lambda_1 }\alpha/\beta  ; q)_{\infty}}  \\
&\hspace{120pt} \cdot  \;  {}_3\phi_2 \Bigl( {\alpha/\beta, q^{-\mu_0 + \lambda_0 }\alpha/\beta  , q^{-(\mu_0 + \mu_1) + \lambda_0 + \lambda_1}\alpha/\beta    \atop{q^{-\mu_0 + 1 }\alpha/\beta , q^{-(\mu_0 + \mu_1) + \lambda_0 + 1}\alpha/\beta }} ; q, q^{3}/(\alpha x) \Bigr) .
\end{align*}
\ref{case:4} $(r = 0)$ :
\begin{align*}
&g^{[3]}_1(x) = (1 - q)^2 q^{\mu_0 - \mu_1 + \lambda_0 + \lambda_1} \; \alpha^{- \mu_0} \; x^{\mu_1 - (\lambda_0 + \lambda_1)}
\frac{(q, q, q^{ \mu_0 + 1 }\beta/\alpha, q^{-\mu_1 + \lambda_0 + 1} ; q)_{\infty}}{(q\beta/\alpha , q^{-\lambda_1 + 1} , q^{ \mu_0}, q^{- \mu_1 + \lambda_0 + \lambda_1  } ; q)_{\infty}} \\
&\hspace{160pt} \cdot {}_3\phi_2 \Bigl( {q^{\lambda_0 }, q^{\mu_0 }, q^{- \mu_1 + \lambda_0 + \lambda_1} \atop{q^{\mu_0 + 1 }\beta/\alpha, q^{- \mu_1 + \lambda_0 + 1} }} ; q, q^{3}/(\alpha x) \Bigr) . 
\end{align*}
\ref{case:4} $(r = 1)$ :
\begin{align*}
&g^{[3]}_1(x) = (1 - q)^2 q^{\mu_0 + 2\mu_1 - \lambda_0} \; \alpha^{-(\mu_0 + \mu_1) + \lambda_0} \; x^{ - \lambda_1} 
\frac{(q, q, q^{\mu_0 + \mu_1 - \lambda_0 + 1} \beta/\alpha , q^{ \mu_1 - \lambda_0 + 1 } ; q)_{\infty}}{(q\beta/\alpha , q^{-\lambda_0 + 1} , q^{\mu_0 + \mu_1 - \lambda_0 } , q^{ \mu_1} ; q)_{\infty}}\\
&\hspace{160pt} \cdot {}_3\phi_2 \Bigl( {q^{\lambda_1 }, q^{\mu_0 + \mu_1 - \lambda_0} , q^{\mu_1 }  \atop{q^{\mu_0 + \mu_1 - \lambda_0 + 1}\beta/\alpha , q^{\mu_1 - \lambda_0 + 1 }}} ; q, q^{3}/(\alpha x) \Bigr) . 
\end{align*}

\subsection{$q$-hypergeometric series solutions to equation \eqref{eq:g1-o4}} $ $ \label{subsec:qhgs4} \\
\ref{case:1} :
\begin{align*}
&g^{[4]}_1(x) 
= (1 - q)^3 q^{ -\mu_0 (\lambda_0 + \lambda_1 + \lambda_2) - (\mu_1 - \lambda_0)(\lambda_1 + \lambda_2) - (\mu_2 - \lambda_1)\lambda_2} \;
x^{\mu_0 + \mu_1 + \mu_2 - (\lambda_0 + \lambda_1 + \lambda_2) } \\  
&\quad \cdot \frac{(q, q, q, q^{\mu_0 - \lambda_0 + 1} , q^{\mu_0 + \mu_1 - (\lambda_0 + \lambda_1) + 1 } , q^{\mu_0 + \mu_1 + \mu_2 - (\lambda_0 + \lambda_1 + \lambda_2) + 1 }; q)_{\infty}}{(q^{-\lambda_0 + 1 } , q^{-\lambda_1 + 1 } , q^{-\lambda_2 + 1 } , q^{\mu_0 } , q^{\mu_0 + \mu_1 - \lambda_0 } , q^{\mu_0 + \mu_1 + \mu_2 - (\lambda_0 + \lambda_1) }; q)_{\infty}} \\
&\quad \cdot {}_4\phi_3 \Bigl( { \alpha /\beta , q^{\mu_0 }, q^{\mu_0 +\mu_1  - \lambda_0 } , q^{\mu_0 + \mu_1 + \mu_2 - (\lambda_0 + \lambda_1)} 
\atop{q^{\mu_0 - \lambda_0 + 1} , q^{\mu_0 + \mu_1 - (\lambda_0 + \lambda_1) + 1 } , q^{\mu_0 + \mu_1 + \mu_2 - (\lambda_0 + \lambda_1 + \lambda_2) + 1 }}}; q, q^{ -(\lambda_0 + \lambda_1 + \lambda_2) } \beta x \Bigr).
\end{align*}
\ref{case:2} $(r = 0)$ :
\begin{align*}
g^{[4]}_1(x) 
&= (1 - q)^3 q^{-(\lambda_1 + \lambda_2)\mu_1 - \lambda_2(\mu_2 - \lambda_1) } \; \beta^{- \widetilde{\mu} + \lambda_0 } \;  x^{\mu_1 + \mu_2 - (\lambda_1 + \lambda_2)} \\
&\quad \cdot \frac{(q, q, q, q^{-\mu_0 + \lambda_0 + 1}, q^{\mu_1 - \lambda_1 + 1}, q^{\mu_1 + \mu_2 - (\lambda_1 + \lambda_2) + 1}  ; q)_{\infty}}{(q\beta/\alpha , q^{-\lambda_1 + 1 } , q^{-\lambda_2 + 1 } , q^{- \mu_0 + \lambda_0 } \alpha/\beta , q^{ \mu_1 } , q^{\mu_1  + \mu_2 - \lambda_1}; q)_{\infty}}  \\
&\quad \cdot {}_4\phi_3 \Bigl( {q^{\lambda_0} , q^{- \mu_0 + \lambda_0 } \alpha/\beta , q^{ \mu_1} ,  q^{\mu_1 + \mu_2 -\lambda_1}  
\atop{q^{-\mu_0 + \lambda_0 + 1}, q^{\mu_1 - \lambda_1 + 1}, q^{\mu_1 + \mu_2 - (\lambda_1 + \lambda_2) + 1}}} ; q, q^{ -(\lambda_0 + \lambda_1 + \lambda_2) } \beta x \Bigr) . 
\end{align*}
\ref{case:2} $(r = 1)$ :
\begin{align*}
g^{[4]}_1(x) 
&= (1 - q)^3 q^{\lambda_0(\mu_1 - (\lambda_0 + \lambda_1)) - \mu_2\lambda_2} \; \beta^{- (\widetilde{\mu} + \mu_1) + \lambda_0 + \lambda_1} \; x^{\mu_2 - \lambda_2} \\
&\quad \cdot \frac{(q, q, q, q^{-(\mu_0 + \mu_1) + \lambda_0 + \lambda_1 + 1}, q^{- \mu_1 + \lambda_1 + 1}, q^{\mu_2 - \lambda_2 + 1} ; q)_{\infty}}{(q\beta/\alpha , q^{-\lambda_0 + 1 } , q^{-\lambda_2 + 1 } , q^{- (\mu_0 + \mu_1) + \lambda_0 + \lambda_1} \alpha/\beta, q^{- \mu_1  + \lambda_0 + \lambda_1}, q^{ \mu_2 } ; q)_{\infty}}  \\
&\quad \cdot {}_4\phi_3 \Bigl( {q^{\lambda_1} , q^{- (\mu_0 + \mu_1) + \lambda_0 + \lambda_1} \alpha /\beta, q^{- \mu_1  + \lambda_0 + \lambda_1},  q^{ \mu_2} 
\atop{q^{-(\mu_0 + \mu_1) + \lambda_0 + \lambda_1 + 1 }, q^{- \mu_1 + \lambda_1 + 1}, q^{ \mu_2 - \lambda_2 + 1} }} ; q, q^{ -(\lambda_0 + \lambda_1 + \lambda_2) } \beta x \Bigr) . 
\end{align*}
\ref{case:2} $(r = 2)$ :
\begin{align*}
g^{[4]}_1(x) 
&= (1 - q)^3 q^{\lambda_0(\mu_1 - (\lambda_0 + \lambda_1)) + (\lambda_0 + \lambda_1)(\mu_2 - (\lambda_1 + \lambda_2))}  \; \beta^{- (\widetilde{\mu} + \mu_1 + \mu_2) + \lambda_0 + \lambda_1 + \lambda_2} \\
&\quad \cdot \frac{(q, q, q, \bar{b}_1, \bar{b}_2, \bar{b}_3 ; q)_{\infty}}{(q\beta/\alpha , q^{-\lambda_0 + 1 } , q^{-\lambda_1 + 1 }, \bar{a}_1, \bar{a}_2, \bar{a}_3 ; q)_{\infty}} \\
&\quad \cdot {}_4\phi_3 \Bigl( {q^{\lambda_2} , \bar{a}_1, \bar{a}_2, \bar{a}_3  
\atop{\bar{b}_1, \bar{b}_2, \bar{b}_3  }} ; q, q^{ -(\lambda_0 + \lambda_1 + \lambda_2) } \beta x \Bigr) , 
\end{align*}
where
\begin{align*}
& \bar{a}_1 = q^{- (\mu_0 + \mu_1 + \mu_2) + \lambda_0 + \lambda_1 + \lambda_2}\alpha/\beta, \quad  
\bar{a}_2 = q^{ -(\mu_1 + \mu_2) + \lambda_0 + \lambda_1 + \lambda_2 }, \quad  
\bar{a}_3 = q^{- \mu_2 + \lambda_1 + \lambda_2}, \\
& \bar{b}_1 = q^{-(\mu_0 + \mu_1 + \mu_2) + \lambda_0 + \lambda_1 + \lambda_2 + 1}, \quad  
\bar{b}_2 = q^{- (\mu_1 + \mu_2) + \lambda_1 + \lambda_2 + 1}, \quad 
\bar{b}_3 = q^{-\mu_2 + \lambda_2 + 1}.
\end{align*}
\ref{case:3}:
\begin{align*}
g^{[4]}_1(x)  
&= (1 - q)^3 q^{-(3\mu_0 + 2\mu_1 + \mu_2) + 3\lambda_0 + 2\lambda_1 + \lambda_2}  \left( \frac{\alpha}{\beta} \right)^3 x^{ \widetilde{\mu} + \mu_1 + \mu_2 - (\lambda_0 + \lambda_1 + \lambda_2)} \\
&\quad \cdot \frac{(q, q, q, \bar{b}_1, \bar{b}_2, \bar{b}_3  ; q)_{\infty}}{(q^{-\lambda_0+ 1} , q^{-\lambda_1 + 1}, q^{-\lambda_2 + 1}, \bar{a}_1, \bar{a}_2, \bar{a}_3 ; q)_{\infty}}  \;
{}_4\phi_3 \Bigl( {\alpha/\beta , \bar{a}_1, \bar{a}_2, \bar{a}_3  
\atop{\bar{b}_1, \bar{b}_2, \bar{b}_3 }} ; q, q^{4}/(\alpha x) \Bigr) ,
\end{align*}
where
\begin{align*}
& \bar{a}_1 = q^{-\mu_0 + \lambda_0}\alpha/\beta , \quad
\bar{a}_2 = q^{-(\mu_0 + \mu_1) + \lambda_0 + \lambda_1}\alpha/\beta , \quad
\bar{a}_3 = q^{ -(\mu_0  + \mu_1 + \mu_2) + \lambda_0 + \lambda_1 + \lambda_2}\alpha/\beta , \\
& \bar{b}_1 = q^{-\mu_0 + 1 }\alpha/\beta, \quad
\bar{b}_2 = q^{-(\mu_0 + \mu_1) + \lambda_0 + 1}\alpha/\beta, \quad
\bar{b}_3 = q^{ -(\mu_0  + \mu_1 + \mu_2) + \lambda_0 + \lambda_1 + 1}\alpha/\beta .
\end{align*}
\ref{case:4} $(r = 0)$ :
\begin{align*}
g^{[4]}_1(x)  
&= (1 - q)^3 q^{\mu_0 - 2(\mu_1 - (\lambda_0 + \lambda_1)) - (\mu_2 - \lambda_2)} \; \alpha^{-\mu_0} \; x^{\mu_1 + \mu_2 - (\lambda_0 + \lambda_1 + \lambda_2)} \\
&\quad \cdot \frac{(q, q, q, q^{\mu_0 + 1}\beta/\alpha, q^{-\mu_1 + \lambda_0 + 1}, q^{ -(\mu_1 + \mu_2) + \lambda_0 + \lambda_1 + 1} ; q)_{\infty}}{(q\beta/\alpha, q^{-\lambda_1 + 1}, q^{-\lambda_2 + 1},  q^{\mu_0}, q^{ -\mu_1 + \lambda_0 + \lambda_1}, q^{ -(\mu_1 + \mu_2) + \lambda_0 + \lambda_1 + \lambda_2} ; q)_{\infty}}  \\
&\quad \cdot   {}_4\phi_3 \Bigl( {q^{\lambda_0} , q^{\mu_0}, q^{ -\mu_1 + \lambda_0 + \lambda_1}, q^{ -(\mu_1 + \mu_2) + \lambda_0 + \lambda_1 + \lambda_2}  
\atop{q^{\mu_0 + 1}\beta/\alpha, q^{-\mu_1 + \lambda_0 + 1}, q^{ -(\mu_1 + \mu_2) + \lambda_0 + \lambda_1 + 1} }} ; q, q^{4}/(\alpha x) \Bigr) .
\end{align*}
\ref{case:4} $(r = 1)$ :
\begin{align*}
g^{[4]}_1(x) 
&= (1 - q)^3 q^{\mu_0 + 2\mu_1 - \mu_2 - \lambda_0 + (\lambda_1 + \lambda_2)} \; \alpha^{-(\mu_0 + \mu_1) + \lambda_0}  \; x^{\mu_2 - (\lambda_1 + \lambda_2)} \\
&\quad \cdot \frac{(q, q, q, q^{\mu_0 + \mu_1 - \lambda_0 + 1 }\beta/\alpha, q^{\mu_1 - \lambda_0 + 1}, q^{-\mu_2 + \lambda_1 + 1} ; q)_{\infty}}{(q\beta/\alpha , q^{-\lambda_0 + 1 }, q^{-\lambda_2 + 1 }, q^{\mu_0 + \mu_1 - \lambda_0}, q^{\mu_1} , q^{-\mu_2 + \lambda_1 + \lambda_2} ; q)_{\infty}} \\
&\quad \cdot {}_4\phi_3 \Bigl( {q^{\lambda_1}, q^{\mu_0 + \mu_1 - \lambda_0}, q^{\mu_1} , q^{-\mu_2 + \lambda_1 + \lambda_2}  \atop{q^{\mu_0 + \mu_1 - \lambda_0 + 1 }\beta/\alpha, q^{\mu_1 - \lambda_0 + 1}, q^{-\mu_2 + \lambda_1 + 1} }} ; q, q^{4}/(\alpha x) \Bigr) .
\end{align*}
\ref{case:4} $(r = 2)$ :
\begin{align*}
g^{[4]}_1(x) 
&= (1 - q)^3 q^{\mu_0 + 2\mu_1 + 3\mu_2 - (\lambda_0 + 2\lambda_1)} \; \alpha^{-(\mu_0 + \mu_1 + \mu_2) + \lambda_0 + \lambda_1} \; x^{ -\lambda_2} \\
&\quad \cdot \frac{(q, q, q, q^{\mu_0 + \mu_1 + \mu_2 - (\lambda_0 + \lambda_1) + 1}\beta/\alpha , q^{\mu_1 + \mu_2 - (\lambda_0 + \lambda_1) + 1}, q^{\mu_2 - \lambda_1 + 1} ; q)_{\infty}}{(q\beta/\alpha, q^{-\lambda_0 + 1} , q^{-\lambda_1 + 1},  q^{\mu_0 + \mu_1 + \mu_2 - (\lambda_0 + \lambda_1) }, q^{\mu_1 + \mu_2 - \lambda_1 }, q^{ \mu_2} ; q)_{\infty}}\\
&\quad \cdot {}_4\phi_3 \Bigl( {q^{\lambda_2}, q^{\mu_0 + \mu_1 + \mu_2 - (\lambda_0 + \lambda_1) }, q^{\mu_1 + \mu_2 - \lambda_1 }, q^{ \mu_2} 
\atop{q^{\mu_0 + \mu_1 + \mu_2 - (\lambda_0 + \lambda_1) + 1}\beta/\alpha , q^{\mu_1 + \mu_2 - (\lambda_0 + \lambda_1) + 1}, q^{\mu_2 - \lambda_1 + 1}  }} ; q, q^{4}/(\alpha x) \Bigr) . 
\end{align*}

\subsection{$q$-hypergeometric series solutions to equation \eqref{eq:g1-o5}} $ $ \label{subsec:qhgs5} \\
\ref{case:1}:
\begin{align*}
g^{[5]}_1(x) 
&= (1 - q)^4 q^{ -\mu_0 (\lambda_0 + \lambda_1 + \lambda_2 + \lambda_3) - (\mu_1 - \lambda_0)(\lambda_1 + \lambda_2 + \lambda_3) - (\mu_2 - \lambda_1)(\lambda_2 + \lambda_3) - (\mu_3 - \lambda_2)\lambda_3 }\\  
&\quad \cdot x^{\mu_0 + \mu_1 + \mu_2  + \mu_3 - (\lambda_0 + \lambda_1 + \lambda_2 + \lambda_3) } \frac{(q, q, q, q ; q)_{\infty}}{(q^{-\lambda_0 + 1 } , q^{-\lambda_1 + 1 } , q^{-\lambda_2 + 1 } , q^{-\lambda_3 + 1 } ; q)_{\infty}} \\
&\quad \cdot \frac{(\bar{b}_1, \bar{b}_2, \bar{b}_3, \bar{b}_4 ; q)_{\infty}}{(\bar{a}_1, \bar{a}_2, \bar{a}_3, \bar{a}_4 ; q)_{\infty}} 
\; {}_5\phi_4 \Bigl( { \alpha /\beta , \bar{a}_1, \bar{a}_2, \bar{a}_3, \bar{a}_4
\atop{\bar{b}_1, \bar{b}_2, \bar{b}_3, \bar{b}_4 }}; q, q^{ -(\lambda_0 + \lambda_1 + \lambda_2 + \lambda_3) } \beta x \Bigr),
\end{align*}
where
\begin{align*}
& \bar{a}_1 = q^{\mu_0 },\quad  \bar{a}_2 = q^{\mu_0 + \mu_1 - \lambda_0 } , \\
& \bar{a}_3 = q^{\mu_0 + \mu_1 + \mu_2 - (\lambda_0 + \lambda_1) } ,\quad  \bar{a}_4 = q^{\mu_0 + \mu_1 + \mu_2 + \mu_3 - (\lambda_0 + \lambda_1 + \lambda_2)}, \\
& \bar{b}_1 = q^{\mu_0 - \lambda_0 + 1} , \quad \bar{b}_2 = q^{\mu_0 + \mu_1 - (\lambda_0 + \lambda_1) + 1 } , \\
& \bar{b}_3 = q^{\mu_0 + \mu_1 + \mu_2 - (\lambda_0 + \lambda_1 + \lambda_2) + 1 } , \quad \bar{b}_4 = q^{\mu_0 + \mu_1 + \mu_2 + \mu_3 - (\lambda_0 + \lambda_1 + \lambda_2 + \lambda_3) + 1 } .
\end{align*}
\ref{case:2} $(r = 0)$ :
\begin{align*}
g^{[5]}_1(x) 
&= (1 - q)^4 q^{-(\lambda_1 + \lambda_2 + \lambda_3)\mu_1 - (\lambda_2 + \lambda_3)(\mu_2 - \lambda_1) - \lambda_3(\mu_3 - \lambda_2)}  
\; \beta^{- \widetilde{\mu} + \lambda_0 } \; x^{\mu_1 + \mu_2 + \mu_3 - (\lambda_1 + \lambda_2 + \lambda_3)} \\
&\quad \cdot \frac{(q, q, q, q  ; q)_{\infty}}{(q\beta/\alpha , q^{-\lambda_1 + 1 } , q^{-\lambda_2 + 1 } , q^{-\lambda_3 + 1 }  ; q)_{\infty}} 
\;\frac{(\bar{b}_1, \bar{b}_2, \bar{b}_3, \bar{b}_4 ; q)_{\infty}}{(\bar{a}_1, \bar{a}_2, \bar{a}_3, \bar{a}_4 ; q)_{\infty}} \\
&\quad \cdot {}_5\phi_4 \Bigl( {q^{\lambda_0} , \bar{a}_1, \bar{a}_2, \bar{a}_3, \bar{a}_4
\atop{\bar{b}_1, \bar{b}_2, \bar{b}_3, \bar{b}_4 }} ; q, q^{ -(\lambda_0 + \lambda_1 + \lambda_2 + \lambda_3) } \beta x \Bigr) , 
\end{align*}
where
\begin{align*}
& \bar{a}_1 = q^{- \mu_0 + \lambda_0 } \alpha/\beta ,\quad  \bar{a}_2 = q^{ \mu_1 } , \quad
\bar{a}_3 = q^{\mu_1  + \mu_2 - \lambda_1}, \quad \bar{a}_4 = q^{\mu_1 + \mu_2 + \mu_3 - (\lambda_1 + \lambda_2)} , \\
& \bar{b}_1 = q^{-\mu_0 + \lambda_0 + 1}, \quad \bar{b}_2 = q^{\mu_1 - \lambda_1 + 1}, \quad
\bar{b}_3 = q^{\mu_1 + \mu_2 - (\lambda_1 + \lambda_2) + 1} , \\ 
& \bar{b}_4 = q^{\mu_1 + \mu_2 + \mu_3 - (\lambda_1 + \lambda_2 + \lambda_3) + 1} .
\end{align*}
\ref{case:2} $(r = 1)$ :
\begin{align*}
g^{[5]}_1(x) 
&= (1 - q)^4 q^{\lambda_0(\mu_1 - (\lambda_0 + \lambda_1)) - \mu_2(\lambda_2 + \lambda_3) -\lambda_3(\mu_3 - \lambda_2)}  \; \beta^{- (\widetilde{\mu} + \mu_1) + \lambda_0 + \lambda_1} \; x^{\mu_2 + \mu_3 - (\lambda_2 + \lambda_3)} \\
&\quad \cdot \frac{(q, q, q, q ; q)_{\infty}}{(q\beta/\alpha , q^{-\lambda_0 + 1 } , q^{-\lambda_2 + 1 } , q^{-\lambda_3 + 1 }  ; q)_{\infty}}  
\;\frac{(\bar{b}_1, \bar{b}_2, \bar{b}_3, \bar{b}_4 ; q)_{\infty}}{(\bar{a}_1, \bar{a}_2, \bar{a}_3, \bar{a}_4 ; q)_{\infty}} \\
&\quad \cdot {}_5\phi_4 \Bigl( {q^{\lambda_1} , \bar{a}_1, \bar{a}_2, \bar{a}_3, \bar{a}_4  
\atop{\bar{b}_1, \bar{b}_2, \bar{b}_3, \bar{b}_4}} ; q, q^{ -(\lambda_0 + \lambda_1 + \lambda_2 + \lambda_3) } \beta x \Bigr) , 
\end{align*}
where
\begin{align*}
& \bar{a}_1 = q^{- (\mu_0 + \mu_1) + \lambda_0 + \lambda_1} \alpha/\beta, \quad \bar{a}_2 = q^{- \mu_1 + \lambda_0 + \lambda_1}, \quad
 \bar{a}_3 = q^{ \mu_2 }, \quad \bar{a}_4 = q^{\mu_2 + \mu_3 - \lambda_2}, \\
& \bar{b}_1 = q^{-(\mu_0 + \mu_1) + \lambda_0 + \lambda_1 + 1}, \quad \bar{b}_2 = q^{- \mu_1 + \lambda_1 + 1}, \quad
\bar{b}_3 = q^{\mu_2 - \lambda_2 + 1}, \quad \bar{b}_4 = q^{\mu_2 + \mu_3 - (\lambda_2 + \lambda_3) + 1}.
\end{align*}
\ref{case:2} $(r = 2)$ :
\begin{align*}
g^{[5]}_1(x) 
&= (1 - q)^4 q^{\lambda_0(\mu_1 - \lambda_0) + (\lambda_0 + \lambda_1)(\mu_2 - (\lambda_1 + \lambda_2))  - \lambda_3\mu_3}  \; \beta^{- (\widetilde{\mu} + \mu_1 + \mu_2) + \lambda_0 + \lambda_1 + \lambda_2} \; x^{\mu_3 - \lambda_3} \\
&\quad \cdot \frac{(q, q, q, q ; q)_{\infty}}{(q\beta/\alpha , q^{-\lambda_0 + 1 } , q^{-\lambda_1 + 1 } , q^{-\lambda_3 + 1 }; q)_{\infty}}  
\;\frac{(\bar{b}_1, \bar{b}_2, \bar{b}_3, \bar{b}_4 ; q)_{\infty}}{(\bar{a}_1, \bar{a}_2, \bar{a}_3, \bar{a}_4 ; q)_{\infty}}\\
&\quad \cdot {}_5\phi_4 \Bigl( {q^{\lambda_2} , \bar{a}_1, \bar{a}_2, \bar{a}_3, \bar{a}_4 
\atop{\bar{b}_1, \bar{b}_2, \bar{b}_3, \bar{b}_4  }} ; q, q^{ -(\lambda_0 + \lambda_1 + \lambda_2 + \lambda_3) } \beta x \Bigr) , 
\end{align*}
where
\begin{align*}
& \bar{a}_1 = q^{-(\mu_0 + \mu_1 + \mu_2) + \lambda_0 + \lambda_1 + \lambda_2}\alpha/\beta , \quad \bar{a}_2 = q^{-(\mu_1 + \mu_2) + \lambda_0 + \lambda_1 + \lambda_2}, \\
& \bar{a}_3 = q^{- \mu_2  + \lambda_1 + \lambda_2} , \quad \bar{a}_4 = q^{ \mu_3 } , \\
& \bar{b}_1 = q^{-(\mu_0 + \mu_1 + \mu_2) + \lambda_0 + \lambda_1 + \lambda_2 + 1}, \quad \bar{b}_2 = q^{-(\mu_1 + \mu_2) + \lambda_1 + \lambda_2 + 1}, \\
& \bar{b}_3 = q^{-\mu_2 + \lambda_2 + 1}, \quad \bar{b}_4 = q^{\mu_3 - \lambda_3 + 1} .
\end{align*}
\ref{case:2} $(r = 3)$ :
\begin{align*}
g^{[5]}_1(x) 
&= (1 - q)^4 q^{\lambda_0(\mu_1 - \lambda_0) + (\lambda_0 + \lambda_1)(\mu_2 - \lambda_1) + (\lambda_0 + \lambda_1 + \lambda_2)(\mu_3 - (\lambda_2 + \lambda_3))} \\
& \quad \cdot \beta^{- (\widetilde{\mu} + \mu_1 + \mu_2 + \mu_3) + \lambda_0 + \lambda_1 + \lambda_2 + \lambda_3} 
\frac{(q, q, q, q ; q)_{\infty}}{(q\beta/\alpha , q^{-\lambda_0 + 1 } , q^{-\lambda_1 + 1 } , q^{-\lambda_2 + 1 } ; q)_{\infty}} \\ 
&\quad \cdot \frac{(\bar{b}_1, \bar{b}_2, \bar{b}_3, \bar{b}_4 ; q)_{\infty}}{(\bar{a}_1, \bar{a}_2, \bar{a}_3, \bar{a}_4 ; q)_{\infty}} 
\; {}_5\phi_4 \Bigl( {q^{\lambda_3} , \bar{a}_1, \bar{a}_2, \bar{a}_3, \bar{a}_4   
\atop{\bar{b}_1, \bar{b}_2, \bar{b}_3, \bar{b}_4  }} ; q, q^{ -(\lambda_0 + \lambda_1 + \lambda_2 + \lambda_3) } \beta x \Bigr) , 
\end{align*}
where
\begin{align*}
& \bar{a}_1 = q^{- (\mu_0 + \mu_1 + \mu_2 + \mu_3) + \lambda_0 + \lambda_1 + \lambda_2 + \lambda_3}\alpha/\beta , \quad
\bar{a}_2 = q^{ -(\mu_1 + \mu_2 + \mu_3) + \lambda_0 + \lambda_1 + \lambda_2 + \lambda_3 }, \\
& \bar{a}_3 = q^{ -(\mu_2 + \mu_3) + \lambda_1 + \lambda_2 + \lambda_3 }, \quad \bar{a}_4 = q^{- \mu_3 + \lambda_2 + \lambda_3}, \\
& \bar{b}_1 = q^{-(\mu_0 + \mu_1 + \mu_2 + \mu_3) + \lambda_0 + \lambda_1 + \lambda_2 + \lambda_3 + 1} , \quad
 \bar{b}_2 = q^{- (\mu_1 + \mu_2 + \mu_3) + \lambda_1 + \lambda_2 + \lambda_3 + 1}, \\
& \bar{b}_3 = q^{-(\mu_2 + \mu_3) + \lambda_2 + \lambda_3 + 1} , \quad \bar{b}_4 = q^{-\mu_3 + \lambda_3 + 1}.
\end{align*}
\ref{case:3}:
\begin{align*}
g^{[5]}_1(x)  
&= (1 - q)^4 q^{-(4\mu_0 + 3\mu_1 + 2\mu_2 + \mu_3) + 4\lambda_0 + 3\lambda_1 + 2\lambda_2 + \lambda_3}  \left( \frac{\alpha}{\beta} \right)^4 x^{ \widetilde{\mu} + \mu_1 + \mu_2 + \mu_3 - (\lambda_0 + \lambda_1 + \lambda_2 + \lambda_3)} \\
&\quad \frac{(q, q, q, q ; q)_{\infty}}{(q^{-\lambda_0+ 1} , q^{-\lambda_1 + 1}, q^{-\lambda_2 + 1}, q^{-\lambda_3 + 1} ; q)_{\infty}}  
\;\frac{(\bar{b}_1, \bar{b}_2, \bar{b}_3, \bar{b}_4 ; q)_{\infty}}{(\bar{a}_1, \bar{a}_2, \bar{a}_3, \bar{a}_4 ; q)_{\infty}} \\
&\quad \cdot  \;  {}_5\phi_4 \Bigl( {\alpha/\beta , \bar{a}_1, \bar{a}_2, \bar{a}_3, \bar{a}_4
\atop{ \bar{b}_1, \bar{b}_2, \bar{b}_3, \bar{b}_4}} ; q, q^{5}/(\alpha x) \Bigr) ,
\end{align*}
where
\begin{align*}
& \bar{a}_1 = q^{-\mu_0 + \lambda_0} \alpha/\beta, \quad \bar{a}_2 = q^{ -(\mu_0  + \mu_1) + \lambda_0 + \lambda_1 }\alpha/\beta, \\
& \bar{a}_3 = q^{ -(\mu_0  + \mu_1 + \mu_2) + \lambda_0 + \lambda_1 + \lambda_2}\alpha/\beta , \quad \bar{a}_4 = q^{ -(\mu_0  + \mu_1 + \mu_2 + \mu_3) + \lambda_0 + \lambda_1 + \lambda_2 + \lambda_3}\alpha/\beta, \\
& \bar{b}_1 = q^{-\mu_0 + 1} \alpha/\beta, \quad \bar{b}_2 = q^{ -(\mu_0 + \mu_1) + \lambda_0 + 1}\alpha/\beta, \\
& \bar{b}_3 = q^{ -(\mu_0  + \mu_1 + \mu_2) + \lambda_0 + \lambda_1 + 1}\alpha/\beta, \quad \bar{b}_4 = q^{ -(\mu_0  + \mu_1 + \mu_2 + \mu_3) + \lambda_0 + \lambda_1 + \lambda_2 + 1}\alpha/\beta .
\end{align*}
\ref{case:4} $(r = 0)$ :
\begin{align*}
g^{[5]}_1(x)  
&= (1 - q)^4 q^{\mu_0 - 3(\mu_1 - (\lambda_0 + \lambda_1)) - 2(\mu_2 - \lambda_2) - (\mu_3 - \lambda_3)} \; \alpha^{-\mu_0} \; x^{\mu_1 + \mu_2 + \mu_3 - (\lambda_0 + \lambda_1 + \lambda_2 + \lambda_3)} \\
&\quad \frac{(q, q, q, q ; q)_{\infty}}{(q\beta/\alpha , q^{-\lambda_1 + 1}, q^{-\lambda_2 + 1}, q^{-\lambda_3 + 1} ; q)_{\infty}}  
\;\frac{(\bar{b}_1, \bar{b}_2, \bar{b}_3, \bar{b}_4 ; q)_{\infty}}{(\bar{a}_1, \bar{a}_2, \bar{a}_3, \bar{a}_4 ; q)_{\infty}} \\
&\quad \cdot  \;  {}_5\phi_4 \Bigl( {q^{\lambda_0} , \bar{a}_1, \bar{a}_2, \bar{a}_3, \bar{a}_4 
\atop{ \bar{b}_1, \bar{b}_2, \bar{b}_3, \bar{b}_4}} ; q, q^{5}/(\alpha x) \Bigr) ,
\end{align*}
where
\begin{align*}
& \bar{a}_1 = q^{\mu_0}, \quad \bar{a}_2 = q^{ -\mu_1 + \lambda_0 + \lambda_1}, \quad
\bar{a}_3 = q^{ -(\mu_1 + \mu_2) + \lambda_0 + \lambda_1 + \lambda_2}, \\
& \bar{a}_4 = q^{ -(\mu_1 + \mu_2 + \mu_3) + \lambda_0 + \lambda_1 + \lambda_2 + \lambda_3}, \\
& \bar{b}_1 = q^{\mu_0 + 1}\beta/\alpha, \quad \bar{b}_2 = q^{-\mu_1 + \lambda_0 + 1}, \quad
\bar{b}_3 = q^{ -(\mu_1 + \mu_2) + \lambda_0 + \lambda_1 + 1} , \\
& \bar{b}_4 = q^{ -(\mu_1 + \mu_2 + \mu_3) + \lambda_0 + \lambda_1 + \lambda_2 + 1}.
\end{align*}
\ref{case:4} $(r = 1)$ :
\begin{align*}
g^{[5]}_1(x)  
&= (1 - q)^4 q^{\mu_0 + 2(\mu_1 - \mu_2) - \mu_3 - \lambda_0 + 2(\lambda_1 + \lambda_2) + \lambda_3}  \; \alpha^{-(\mu_0 + \mu_1) + \lambda_0} \; x^{\mu_2 + \mu_3 - (\lambda_1 + \lambda_2 + \lambda_3)} \\
&\quad \frac{(q, q, q, q ; q)_{\infty}}{(q\beta/\alpha, q^{-\lambda_0 + 1}, q^{-\lambda_2 + 1}, q^{-\lambda_3 + 1} ; q)_{\infty}}  
\;\frac{(\bar{b}_1, \bar{b}_2, \bar{b}_3, \bar{b}_4 ; q)_{\infty}}{(\bar{a}_1, \bar{a}_2, \bar{a}_3, \bar{a}_4 ; q)_{\infty}} \\
&\quad \cdot  \;  {}_5\phi_4 \Bigl( {q^{\lambda_1} , \bar{a}_1, \bar{a}_2, \bar{a}_3, \bar{a}_4
\atop{\bar{b}_1, \bar{b}_2, \bar{b}_3, \bar{b}_4 }} ; q, q^{5}/(\alpha x) \Bigr) ,
\end{align*}
where
\begin{align*}
& \bar{a}_1 = q^{\mu_0 + \mu_1 - \lambda_0}, \quad \bar{a}_2 = q^{\mu_1}, \quad
 \bar{a}_3 = q^{ -\mu_2 + \lambda_1 + \lambda_2}, \quad \bar{a}_4 = q^{ -(\mu_2 + \mu_3) + \lambda_1 + \lambda_2 + \lambda_3}, \\
& \bar{b}_1 = q^{\mu_0 + \mu_1 - \lambda_0 + 1}\beta/\alpha, \quad \bar{b}_2 = q^{\mu_1 - \lambda_0 + 1}, \quad
\bar{b}_3 = q^{ -\mu_2 + \lambda_1 + 1} , \quad \bar{b}_4 = q^{ -(\mu_2 + \mu_3) + \lambda_1 + \lambda_2 + 1}.
\end{align*}
\ref{case:4} $(r = 2)$ :
\begin{align*}
g^{[5]}_1(x) 
&= (1 - q)^4 q^{\mu_0 + 2\mu_1 + 3\mu_2 - \mu_3 - (\lambda_0 + 2\lambda_1) + \lambda_2 + \lambda_3} \; \alpha^{-(\mu_0 + \mu_1 + \mu_2) + \lambda_0 + \lambda_1} \; x^{\mu_3 - (\lambda_2 + \lambda_3)} \\
&\quad \cdot \frac{(q, q, q, q ; q)_{\infty}}{(q\beta/\alpha , q^{-\lambda_0 + 1 }, q^{-\lambda_1 + 1 }, q^{-\lambda_3 + 1 } ; q)_{\infty}} 
\;\frac{(\bar{b}_1, \bar{b}_2, \bar{b}_3, \bar{b}_4 ; q)_{\infty}}{(\bar{a}_1, \bar{a}_2, \bar{a}_3, \bar{a}_4 ; q)_{\infty}} \\
&\quad \cdot {}_5\phi_4 \Bigl( {q^{\lambda_2}, \bar{a}_1, \bar{a}_2, \bar{a}_3, \bar{a}_4 
\atop{\bar{b}_1, \bar{b}_2, \bar{b}_3, \bar{b}_4 }} ; q, q^{5}/(\alpha x) \Bigr) , 
\end{align*}
where
\begin{align*}
& \bar{a}_1 = q^{\mu_0 + \mu_1 + \mu_2 - (\lambda_0 + \lambda_1)}, \quad \bar{a}_2 = q^{\mu_1 + \mu_2 - \lambda_1}, \quad
\bar{a}_3 = q^{\mu_2} , \quad \bar{a}_4 = q^{-\mu_3 + \lambda_2 + \lambda_3}, \\
& \bar{b}_1 = q^{\mu_0 + \mu_1 + \mu_2 - (\lambda_0 + \lambda_1) + 1 }\beta/\alpha, \quad \bar{b}_2 = q^{\mu_1 + \mu_2 - (\lambda_0 + \lambda_1) + 1}, \quad
\bar{b}_3 = q^{\mu_2 - \lambda_1 + 1} , \\
& \bar{b}_4 = q^{-\mu_3 + \lambda_2 + 1}.
\end{align*}
\ref{case:4} $(r = 3)$ :
\begin{align*}
g^{[5]}_1(x) 
&= (1 - q)^4 q^{\mu_0 + 2\mu_1 + 3\mu_2 + 4\mu_3 - (\lambda_0 + 2\lambda_1 + 3\lambda_2)} \; \alpha^{-(\mu_0 + \mu_1 + \mu_2 + \mu_3) + \lambda_0 + \lambda_1 + \lambda_2} \; x^{ -\lambda_3} \\
&\quad \cdot \frac{(q, q, q, q ; q)_{\infty}}{(q\beta/\alpha , q^{-\lambda_0  + 1} , q^{-\lambda_1 + 1}, q^{-\lambda_2 + 1} ; q)_{\infty}}
\;\frac{(\bar{b}_1, \bar{b}_2, \bar{b}_3, \bar{b}_4 ; q)_{\infty}}{(\bar{a}_1, \bar{a}_2, \bar{a}_3, \bar{a}_4 ; q)_{\infty}} \\
&\quad \cdot {}_5\phi_4 \Bigl( {q^{\lambda_3}, \bar{a}_1, \bar{a}_2, \bar{a}_3, \bar{a}_4 
\atop{\bar{b}_1, \bar{b}_2, \bar{b}_3, \bar{b}_4 }} ; q, q^{5}/(\alpha x) \Bigr) , 
\end{align*}
where
\begin{align*}
& \bar{a}_1 = q^{\mu_0 + \mu_1 + \mu_2 + \mu_3 - (\lambda_0 + \lambda_1 + \lambda_2) }, \quad \bar{a}_2 = q^{\mu_1 + \mu_2 + \mu_3 - (\lambda_1 + \lambda_2) }, \\
& \bar{a}_3 = q^{\mu_2 + \mu_3 - \lambda_2}, \quad \bar{a}_4 = q^{ \mu_3}, \\
& \bar{b}_1 = q^{\mu_0 + \mu_1 + \mu_2 + \mu_3 - (\lambda_0 + \lambda_1 + \lambda_2) + 1}\beta/\alpha , \quad \bar{b}_2 = q^{\mu_1 + \mu_2 + \mu_3 - (\lambda_0 + \lambda_1 + \lambda_2) + 1}, \\
& \bar{b}_3 = q^{\mu_2 + \mu_3 - (\lambda_1 + \lambda_2) + 1}, \quad \bar{b}_4 = q^{\mu_3 - \lambda_2 + 1} .
\end{align*}

\subsection{$q$-hypergeometric series solutions to equation \eqref{eq:g1-o6}} $ $ \label{subsec:qhgs6} \\
\ref{case:1}:
\begin{align*}
&g^{[6]}_1(x) = (1 - q)^5 \\
&\quad \cdot q^{ -\mu_0 (\lambda_0 + \lambda_1 + \lambda_2 + \lambda_3 + \lambda_4) - (\mu_1 - \lambda_0)(\lambda_1 + \lambda_2 + \lambda_3 + \lambda_4) - (\mu_2 - \lambda_1)(\lambda_2 + \lambda_3 + \lambda_4) - (\mu_3 - \lambda_2)(\lambda_3 + \lambda_4) - (\mu_4 - \lambda_3)\lambda_4 }\\  
&\quad \cdot x^{\mu_0 + \mu_1 + \mu_2  + \mu_3  + \mu_4 - (\lambda_0 + \lambda_1 + \lambda_2 + \lambda_3 + \lambda_4) } \\
&\quad \cdot \frac{(q, q, q, q, q ; q)_{\infty}}{(q^{-\lambda_0 + 1 } , q^{-\lambda_1 + 1 } , q^{-\lambda_2 + 1 } , q^{-\lambda_3 + 1 } , q^{-\lambda_4 + 1 }; q)_{\infty}} 
\; \frac{(\bar{b}_1, \bar{b}_2, \bar{b}_3, \bar{b}_4, \bar{b}_5 ; q)_{\infty}}{(\bar{a}_1, \bar{a}_2, \bar{a}_3, \bar{a}_4, \bar{a}_5 ; q)_{\infty}} \\
&\quad \cdot  {}_6\phi_5 \Bigl( { \alpha /\beta , \bar{a}_1, \bar{a}_2, \bar{a}_3, \bar{a}_4, \bar{a}_5
\atop{\bar{b}_1, \bar{b}_2, \bar{b}_3, \bar{b}_4, \bar{b}_5 }}; q, q^{ -(\lambda_0 + \lambda_1 + \lambda_2 + \lambda_3 + \lambda_4) } \beta x \Bigr),
\end{align*}
where
\begin{align*}
& \bar{a}_1 = q^{\mu_0 },\quad  \bar{a}_2 = q^{\mu_0 + \mu_1 - \lambda_0 } , \quad \bar{a}_3 = q^{\mu_0 + \mu_1 + \mu_2 - (\lambda_0 + \lambda_1) }, \\
& \bar{a}_4 = q^{\mu_0 + \mu_1 + \mu_2 + \mu_3 - (\lambda_0 + \lambda_1 + \lambda_2)}, \quad  \bar{a}_5 = q^{\mu_0 + \mu_1 + \mu_2 + \mu_3 + \mu_4 - (\lambda_0 + \lambda_1 + \lambda_2 + \lambda_3)}, \\
& \bar{b}_1 = q^{\mu_0 - \lambda_0 + 1} , \quad \bar{b}_2 = q^{\mu_0 + \mu_1 - (\lambda_0 + \lambda_1) + 1 } , \\
& \bar{b}_3 = q^{\mu_0 + \mu_1 + \mu_2 - (\lambda_0 + \lambda_1 + \lambda_2) + 1 } , \quad \bar{b}_4 = q^{\mu_0 + \mu_1 + \mu_2 + \mu_3 - (\lambda_0 + \lambda_1 + \lambda_2 + \lambda_3) + 1 } , \\
& \bar{b}_5 = q^{\mu_0 + \mu_1 + \mu_2 + \mu_3 + \mu_4 - (\lambda_0 + \lambda_1 + \lambda_2 + \lambda_3 + \lambda_4) + 1 } .
\end{align*}
\ref{case:2} $(r = 0)$ :
\begin{align*}
g^{[6]}_1(x) 
&= (1 - q)^5 q^{-(\lambda_1 + \lambda_2 + \lambda_3 + \lambda_4)\mu_1 - (\lambda_2 + \lambda_3 + \lambda_4)(\mu_2 - \lambda_1) - (\lambda_3 + \lambda_4)(\mu_3 - \lambda_2) - \lambda_4(\mu_4 - \lambda_3)} \\ 
&\quad \cdot \beta^{- \widetilde{\mu} + \lambda_0 } \; x^{\mu_1 + \mu_2 + \mu_3 + \mu_4 - (\lambda_1 + \lambda_2 + \lambda_3 + \lambda_4)} \\
&\quad \cdot \frac{(q, q, q, q, q  ; q)_{\infty}}{(q\beta/\alpha , q^{-\lambda_1 + 1 } , q^{-\lambda_2 + 1 } , q^{-\lambda_3 + 1 } , q^{-\lambda_4 + 1 } ; q)_{\infty}} 
\;\frac{(\bar{b}_1, \bar{b}_2, \bar{b}_3, \bar{b}_4, \bar{b}_5 ; q)_{\infty}}{(\bar{a}_1, \bar{a}_2, \bar{a}_3, \bar{a}_4, \bar{a}_5 ; q)_{\infty}} \\
&\quad \cdot {}_6\phi_5 \Bigl( {q^{\lambda_0} , \bar{a}_1, \bar{a}_2, \bar{a}_3, \bar{a}_4, \bar{a}_5
\atop{\bar{b}_1, \bar{b}_2, \bar{b}_3, \bar{b}_4, \bar{b}_5 }} ; q, q^{ -(\lambda_0 + \lambda_1 + \lambda_2 + \lambda_3 + \lambda_4) } \beta x \Bigr) , 
\end{align*}
where 
\begin{align*}
& \bar{a}_1 = q^{- \mu_0 + \lambda_0 } \alpha/\beta ,\quad  \bar{a}_2 = q^{ \mu_1 } , \quad \bar{a}_3 = q^{\mu_1  + \mu_2 - \lambda_1}, \\
& \bar{a}_4 = q^{\mu_1 + \mu_2 + \mu_3 - (\lambda_1 + \lambda_2)} , \quad \bar{a}_5 = q^{\mu_1 + \mu_2 + \mu_3 + \mu_4 - (\lambda_1 + \lambda_2 + \lambda_3)} , \\
& \bar{b}_1 = q^{-\mu_0 + \lambda_0 + 1}, \quad \bar{b}_2 = q^{\mu_1 - \lambda_1 + 1}, \quad \bar{b}_3 = q^{\mu_1 + \mu_2 - (\lambda_1 + \lambda_2) + 1} ,\\
& \bar{b}_4 = q^{\mu_1 + \mu_2 + \mu_3 - (\lambda_1 + \lambda_2 + \lambda_3) + 1} , \quad \bar{b}_5 = q^{\mu_1 + \mu_2 + \mu_3 + \mu_4 - (\lambda_1 + \lambda_2 + \lambda_3 + \lambda_4) + 1} .
\end{align*}
\ref{case:2} $(r = 1)$ :
\begin{align*}
g^{[6]}_1(x) 
&= (1 - q)^5 q^{\lambda_0(\mu_1 - (\lambda_0 + \lambda_1)) - \mu_2(\lambda_2 + \lambda_3 + \lambda_4) - (\lambda_3 + \lambda_4)(\mu_3 - \lambda_2) - \lambda_4(\mu_4 - \lambda_3)}  \\
&\quad \cdot \beta^{- (\widetilde{\mu} + \mu_1) + \lambda_0 + \lambda_1} \; x^{\mu_2 + \mu_3 + \mu_4 - (\lambda_2 + \lambda_3 + \lambda_4)} \\
&\quad \cdot \frac{(q, q, q, q, q ; q)_{\infty}}{(q\beta/\alpha , q^{-\lambda_0 + 1 } , q^{-\lambda_2 + 1 } , q^{-\lambda_3 + 1 } , q^{-\lambda_4 + 1 }  ; q)_{\infty}}  
\;\frac{(\bar{b}_1, \bar{b}_2, \bar{b}_3, \bar{b}_4, \bar{b}_5 ; q)_{\infty}}{(\bar{a}_1, \bar{a}_2, \bar{a}_3, \bar{a}_4, \bar{a}_5 ; q)_{\infty}} \\
&\quad \cdot {}_6\phi_5 \Bigl( {q^{\lambda_1} , \bar{a}_1, \bar{a}_2, \bar{a}_3, \bar{a}_4, \bar{a}_5  
\atop{\bar{b}_1, \bar{b}_2, \bar{b}_3, \bar{b}_4, \bar{b}_5}} ; q, q^{ -(\lambda_0 + \lambda_1 + \lambda_2 + \lambda_3 + \lambda_4) } \beta x \Bigr) , 
\end{align*}
where
\begin{align*}
& \bar{a}_1 = q^{- (\mu_0 + \mu_1) + \lambda_0 + \lambda_1} \alpha/\beta, \quad \bar{a}_2 = q^{- \mu_1 + \lambda_0 + \lambda_1}, \quad \bar{a}_3 = q^{ \mu_2 }, \\
& \bar{a}_4 = q^{\mu_2 + \mu_3 - \lambda_2}, \quad \bar{a}_5 = q^{\mu_2 + \mu_3 + \mu_4 - (\lambda_2 + \lambda_3)}, \\
& \bar{b}_1 = q^{-(\mu_0 + \mu_1) + \lambda_0 + \lambda_1 + 1}, \quad \bar{b}_2 = q^{- \mu_1 + \lambda_1 + 1}, \quad \bar{b}_3 = q^{\mu_2 - \lambda_2 + 1}, \\
& \bar{b}_4 = q^{\mu_2 + \mu_3 - (\lambda_2 + \lambda_3) + 1}, \quad \bar{b}_5 = q^{\mu_2 + \mu_3 + \mu_4 - (\lambda_2 + \lambda_3 + \lambda_4) + 1} .
\end{align*}
\ref{case:2} $(r = 2)$ :
\begin{align*}
g^{[6]}_1(x) 
&= (1 - q)^5 q^{\lambda_0(\mu_1 - \lambda_0) + (\lambda_0 + \lambda_1)(\mu_2 - (\lambda_1 + \lambda_2))  - (\lambda_3 + \lambda_4)\mu_3 -\lambda_4(\mu_4 - \lambda_3)} \\
&\quad \cdot \beta^{- (\widetilde{\mu} + \mu_1 + \mu_2) + \lambda_0 + \lambda_1 + \lambda_2} \; x^{\mu_3 + \mu_4 - (\lambda_3 + \lambda_4)} \\
&\quad \cdot \frac{(q, q, q, q, q ; q)_{\infty}}{(q\beta/\alpha , q^{-\lambda_0 + 1 } , q^{-\lambda_1 + 1 } , q^{-\lambda_3 + 1 }, q^{-\lambda_4 + 1 }; q)_{\infty}}  
\;\frac{(\bar{b}_1, \bar{b}_2, \bar{b}_3, \bar{b}_4, \bar{b}_5 ; q)_{\infty}}{(\bar{a}_1, \bar{a}_2, \bar{a}_3, \bar{a}_4, \bar{a}_5 ; q)_{\infty}}\\
&\quad \cdot {}_6\phi_5 \Bigl( {q^{\lambda_2} , \bar{a}_1, \bar{a}_2, \bar{a}_3, \bar{a}_4, \bar{a}_5 
\atop{\bar{b}_1, \bar{b}_2, \bar{b}_3, \bar{b}_4, \bar{b}_5  }} ; q, q^{ -(\lambda_0 + \lambda_1 + \lambda_2 + \lambda_3 + \lambda_4) } \beta x \Bigr) , 
\end{align*}
where
\begin{align*}
& \bar{a}_1 = q^{-(\mu_0 + \mu_1 + \mu_2) + \lambda_0 + \lambda_1 + \lambda_2}\alpha/\beta , \quad \bar{a}_2 = q^{-(\mu_1 + \mu_2) + \lambda_0 + \lambda_1 + \lambda_2}, \quad \bar{a}_3 = q^{- \mu_2  + \lambda_1 + \lambda_2} , \\
& \bar{a}_4 = q^{ \mu_3 } , \quad \bar{a}_5 = q^{ \mu_3 + \mu_4 - \lambda_3} ,\\
& \bar{b}_1 = q^{-(\mu_0 + \mu_1 + \mu_2) + \lambda_0 + \lambda_1 + \lambda_2 + 1}, \quad \bar{b}_2 = q^{-(\mu_1 + \mu_2) + \lambda_1 + \lambda_2 + 1}, \quad \bar{b}_3 = q^{-\mu_2 + \lambda_2 + 1},\\
& \bar{b}_4 = q^{\mu_3 - \lambda_3 + 1}, \quad \bar{b}_5 = q^{\mu_3 + \mu_4 - (\lambda_3 + \lambda_4) + 1}.
\end{align*}
\ref{case:2} $(r = 3)$ :
\begin{align*}
g^{[6]}_1(x) 
&= (1 - q)^5 q^{\lambda_0(\mu_1 - \lambda_0) + (\lambda_0 + \lambda_1)(\mu_2 - \lambda_1) + (\lambda_0 + \lambda_1 + \lambda_2)(\mu_3 - (\lambda_2 + \lambda_3)) - \lambda_4\mu_4} \\
& \quad \cdot \beta^{- (\widetilde{\mu} + \mu_1 + \mu_2 + \mu_3) + \lambda_0 + \lambda_1 + \lambda_2 + \lambda_3} \; x^{\mu_4 - \lambda_4} \\
&\quad \cdot \frac{(q, q, q, q, q ; q)_{\infty}}{(q\beta/\alpha , q^{-\lambda_0 + 1 } , q^{-\lambda_1 + 1 } , q^{-\lambda_2 + 1 } , q^{-\lambda_4 + 1 } ; q)_{\infty}}  
\;\frac{(\bar{b}_1, \bar{b}_2, \bar{b}_3, \bar{b}_4, \bar{b}_5 ; q)_{\infty}}{(\bar{a}_1, \bar{a}_2, \bar{a}_3, \bar{a}_4, \bar{a}_5 ; q)_{\infty}} \\
&\quad \cdot {}_6\phi_5 \Bigl( {q^{\lambda_3} , \bar{a}_1, \bar{a}_2, \bar{a}_3, \bar{a}_4, \bar{a}_5   
\atop{\bar{b}_1, \bar{b}_2, \bar{b}_3, \bar{b}_4, \bar{b}_5 }} ; q, q^{ -(\lambda_0 + \lambda_1 + \lambda_2 + \lambda_3 + \lambda_4) } \beta x \Bigr) , 
\end{align*}
where
\begin{align*}
& \bar{a}_1 = q^{- (\mu_0 + \mu_1 + \mu_2 + \mu_3) + \lambda_0 + \lambda_1 + \lambda_2 + \lambda_3}\alpha/\beta , \quad
\bar{a}_2 = q^{ -(\mu_1 + \mu_2 + \mu_3) + \lambda_0 + \lambda_1 + \lambda_2 + \lambda_3 }, \\
& \bar{a}_3 = q^{ -(\mu_2 + \mu_3) + \lambda_1 + \lambda_2 + \lambda_3 }, \quad \bar{a}_4 = q^{- \mu_3 + \lambda_2 + \lambda_3}, \quad \bar{a}_5 = q^{\mu_4}, \\
& \bar{b}_1 = q^{-(\mu_0 + \mu_1 + \mu_2 + \mu_3) + \lambda_0 + \lambda_1 + \lambda_2 + \lambda_3 + 1} , \quad
 \bar{b}_2 = q^{- (\mu_1 + \mu_2 + \mu_3) + \lambda_1 + \lambda_2 + \lambda_3 + 1}, \\
& \bar{b}_3 = q^{-(\mu_2 + \mu_3) + \lambda_2 + \lambda_3 + 1} , \quad \bar{b}_4 = q^{-\mu_3 + \lambda_3 + 1}, \quad \bar{b}_5 = q^{\mu_4 - \lambda_4 + 1} .
\end{align*}
\ref{case:2} $(r = 4)$ :
\begin{align*}
g^{[6]}_1(x) 
&= (1 - q)^5 q^{\lambda_0(\mu_1 - \lambda_0) + (\lambda_0 + \lambda_1)(\mu_2 - \lambda_1) + (\lambda_0 + \lambda_1 + \lambda_2)(\mu_3 - \lambda_2) + (\lambda_0 + \lambda_1 + \lambda_2 + \lambda_3)(\mu_4 - (\lambda_3 + \lambda_4))} \\
& \quad \cdot \beta^{- (\widetilde{\mu} + \mu_1 + \mu_2 + \mu_3 + \mu_4) + \lambda_0 + \lambda_1 + \lambda_2 + \lambda_3 + \lambda_4} \\
&\quad \cdot \frac{(q, q, q, q, q ; q)_{\infty}}{(q\beta/\alpha , q^{-\lambda_0 + 1 } , q^{-\lambda_1 + 1 } , q^{-\lambda_2 + 1 } , q^{-\lambda_3 + 1 } ; q)_{\infty}}  
\;\frac{(\bar{b}_1, \bar{b}_2, \bar{b}_3, \bar{b}_4, \bar{b}_5 ; q)_{\infty}}{(\bar{a}_1, \bar{a}_2, \bar{a}_3, \bar{a}_4, \bar{a}_5 ; q)_{\infty}} \\
&\quad \cdot {}_6\phi_5 \Bigl( {q^{\lambda_4} , \bar{a}_1, \bar{a}_2, \bar{a}_3, \bar{a}_4, \bar{a}_5   
\atop{\bar{b}_1, \bar{b}_2, \bar{b}_3, \bar{b}_4, \bar{b}_5 }} ; q, q^{ -(\lambda_0 + \lambda_1 + \lambda_2 + \lambda_3 + \lambda_4) } \beta x \Bigr) , 
\end{align*}
where
\begin{align*}
& \bar{a}_1 = q^{- (\mu_0 + \mu_1 + \mu_2 + \mu_3 + \mu_4) + \lambda_0 + \lambda_1 + \lambda_2 + \lambda_3 + \lambda_4}\alpha/\beta , \quad
\bar{a}_2 = q^{ -(\mu_1 + \mu_2 + \mu_3 + \mu_4) + \lambda_0 + \lambda_1 + \lambda_2 + \lambda_3 + \lambda_4}, \\
& \bar{a}_3 = q^{ -(\mu_2 + \mu_3 + \mu_4) + \lambda_1 + \lambda_2 + \lambda_3 + \lambda_4}, \quad \bar{a}_4 = q^{- (\mu_3 + \mu_4) + \lambda_2 + \lambda_3 + \lambda_4}, \quad \bar{a}_5 = q^{-\mu_4 + \lambda_3 + \lambda_4}, \\
& \bar{b}_1 = q^{-(\mu_0 + \mu_1 + \mu_2 + \mu_3 + \mu_4) + \lambda_0 + \lambda_1 + \lambda_2 + \lambda_3 + \lambda_4 + 1} , \quad
 \bar{b}_2 = q^{- (\mu_1 + \mu_2 + \mu_3 + \mu_4) + \lambda_1 + \lambda_2 + \lambda_3 + \lambda_4 + 1}, \\
& \bar{b}_3 = q^{-(\mu_2 + \mu_3 + \mu_4) + \lambda_2 + \lambda_3 + \lambda_4 + 1} , \quad \bar{b}_4 = q^{-(\mu_3 + \mu_4) + \lambda_3 + \lambda_4 + 1}, \quad \bar{b}_5 = q^{-\mu_4 + \lambda_4 + 1} .
\end{align*}
\ref{case:3}:
\begin{align*}
g^{[6]}_1(x)  
&= (1 - q)^5 q^{-(5\mu_0 + 4\mu_1 + 3\mu_2 + 2\mu_3 + \mu_4) + 5\lambda_0 + 4\lambda_1 + 3\lambda_2 + 2\lambda_3 + \lambda_4}  \left( \frac{\alpha}{\beta} \right)^5 \\
&\quad \cdot x^{ \widetilde{\mu} + \mu_1 + \mu_2 + \mu_3 + \mu_4 - (\lambda_0 + \lambda_1 + \lambda_2 + \lambda_3 + \lambda_4)} \\
&\quad \frac{(q, q, q, q, q ; q)_{\infty}}{(q^{-\lambda_0+ 1} , q^{-\lambda_1 + 1}, q^{-\lambda_2 + 1}, q^{-\lambda_3 + 1}, q^{-\lambda_4 + 1} ; q)_{\infty}}  
\;\frac{(\bar{b}_1, \bar{b}_2, \bar{b}_3, \bar{b}_4, \bar{b}_5 ; q)_{\infty}}{(\bar{a}_1, \bar{a}_2, \bar{a}_3, \bar{a}_4, \bar{a}_5 ; q)_{\infty}} \\
&\quad \cdot  \;  {}_6\phi_5 \Bigl( {\alpha/\beta , \bar{a}_1, \bar{a}_2, \bar{a}_3, \bar{a}_4, \bar{a}_5
\atop{ \bar{b}_1, \bar{b}_2, \bar{b}_3, \bar{b}_4, \bar{b}_5 }} ; q, q^{6}/(\alpha x) \Bigr) ,
\end{align*}
where
\begin{align*}
& \bar{a}_1 = q^{-\mu_0 + \lambda_0} \alpha/\beta, \quad \bar{a}_2 = q^{ -(\mu_0  + \mu_1) + \lambda_0 + \lambda_1 }\alpha/\beta, \\
& \bar{a}_3 = q^{ -(\mu_0  + \mu_1 + \mu_2) + \lambda_0 + \lambda_1 + \lambda_2}\alpha/\beta , \quad \bar{a}_4 = q^{ -(\mu_0  + \mu_1 + \mu_2 + \mu_3) + \lambda_0 + \lambda_1 + \lambda_2 + \lambda_3}\alpha/\beta, \\
& \bar{a}_5 = q^{ -(\mu_0  + \mu_1 + \mu_2 + \mu_3 + \mu_4) + \lambda_0 + \lambda_1 + \lambda_2 + \lambda_3 + \lambda_4}\alpha/\beta, \\
& \bar{b}_1 = q^{-\mu_0 + 1} \alpha/\beta, \quad \bar{b}_2 = q^{ -(\mu_0 + \mu_1) + \lambda_0 + 1}\alpha/\beta, \\
& \bar{b}_3 = q^{ -(\mu_0  + \mu_1 + \mu_2) + \lambda_0 + \lambda_1 + 1}\alpha/\beta, \quad \bar{b}_4 = q^{ -(\mu_0  + \mu_1 + \mu_2 + \mu_3) + \lambda_0 + \lambda_1 + \lambda_2 + 1}\alpha/\beta, \\
& \bar{b}_5 = q^{ -(\mu_0  + \mu_1 + \mu_2 + \mu_3 + \mu_4) + \lambda_0 + \lambda_1 + \lambda_2 + \lambda_3 + 1}\alpha/\beta .
\end{align*}
\ref{case:4} $(r = 0)$ :
\begin{align*}
g^{[6]}_1(x)  
&= (1 - q)^5 q^{\mu_0 - 4(\mu_1 - (\lambda_0 + \lambda_1)) - 3(\mu_2 - \lambda_2) - 2(\mu_3 - \lambda_3) - (\mu_4 - \lambda_4)} \; \alpha^{-\mu_0} \\
&\quad \cdot x^{\mu_1 + \mu_2 + \mu_3 + \mu_4 - (\lambda_0 + \lambda_1 + \lambda_2 + \lambda_3 + \lambda_4)} \\
&\quad \frac{(q, q, q, q, q ; q)_{\infty}}{(q\beta/\alpha , q^{-\lambda_1 + 1}, q^{-\lambda_2 + 1}, q^{-\lambda_3 + 1}, q^{-\lambda_4 + 1} ; q)_{\infty}}  
\;\frac{(\bar{b}_1, \bar{b}_2, \bar{b}_3, \bar{b}_4, \bar{b}_5 ; q)_{\infty}}{(\bar{a}_1, \bar{a}_2, \bar{a}_3, \bar{a}_4, \bar{a}_5 ; q)_{\infty}} \\
&\quad \cdot  \;  {}_6\phi_5 \Bigl( {q^{\lambda_0} , \bar{a}_1, \bar{a}_2, \bar{a}_3, \bar{a}_4, \bar{a}_5 
\atop{ \bar{b}_1, \bar{b}_2, \bar{b}_3, \bar{b}_4, \bar{b}_5 }} ; q, q^{6}/(\alpha x) \Bigr) ,
\end{align*}
where
\begin{align*}
& \bar{a}_1 = q^{\mu_0}, \quad \bar{a}_2 = q^{ -\mu_1 + \lambda_0 + \lambda_1}, \quad \bar{a}_3 = q^{ -(\mu_1 + \mu_2) + \lambda_0 + \lambda_1 + \lambda_2}, \\
& \bar{a}_4 = q^{ -(\mu_1 + \mu_2 + \mu_3) + \lambda_0 + \lambda_1 + \lambda_2 + \lambda_3}, \quad \bar{a}_5 = q^{ -(\mu_1 + \mu_2 + \mu_3 + \mu_4) + \lambda_0 + \lambda_1 + \lambda_2 + \lambda_3 + \lambda_4}, \\
& \bar{b}_1 = q^{\mu_0 + 1}\beta/\alpha, \quad \bar{b}_2 = q^{-\mu_1 + \lambda_0 + 1}, \quad \bar{b}_3 = q^{ -(\mu_1 + \mu_2) + \lambda_0 + \lambda_1 + 1} , \\
& \bar{b}_4 = q^{ -(\mu_1 + \mu_2 + \mu_3) + \lambda_0 + \lambda_1 + \lambda_2 + 1}, \quad \bar{b}_5 = q^{ -(\mu_1 + \mu_2 + \mu_3 + \mu_4) + \lambda_0 + \lambda_1 + \lambda_2 + \lambda_3 + 1} .
\end{align*}
\ref{case:4} $(r = 1)$ :
\begin{align*}
g^{[6]}_1(x)  
&= (1 - q)^5 q^{\mu_0 + 2\mu_1 - 3\mu_2 - 2\mu_3 - \mu_4 - \lambda_0 + 3(\lambda_1 + \lambda_2) + 2\lambda_3 + \lambda_4} \; \alpha^{-(\mu_0 + \mu_1) + \lambda_0} \\
&\quad \cdot x^{\mu_2 + \mu_3 + \mu_4 - (\lambda_1 + \lambda_2 + \lambda_3 + \lambda_4)} \\
&\quad \frac{(q, q, q, q, q ; q)_{\infty}}{(q\beta/\alpha, q^{-\lambda_0 + 1}, q^{-\lambda_2 + 1}, q^{-\lambda_3 + 1}, q^{-\lambda_4 + 1} ; q)_{\infty}}  
\;\frac{(\bar{b}_1, \bar{b}_2, \bar{b}_3, \bar{b}_4, \bar{b}_5 ; q)_{\infty}}{(\bar{a}_1, \bar{a}_2, \bar{a}_3, \bar{a}_4, \bar{a}_5 ; q)_{\infty}} \\
&\quad \cdot  \;  {}_6\phi_5 \Bigl( {q^{\lambda_1} , \bar{a}_1, \bar{a}_2, \bar{a}_3, \bar{a}_4, \bar{a}_5
\atop{\bar{b}_1, \bar{b}_2, \bar{b}_3, \bar{b}_4, \bar{b}_5 }} ; q, q^{6}/(\alpha x) \Bigr) ,
\end{align*}
where
\begin{align*}
& \bar{a}_1 = q^{\mu_0 + \mu_1 - \lambda_0}, \quad \bar{a}_2 = q^{\mu_1}, \quad \bar{a}_3 = q^{ -\mu_2 + \lambda_1 + \lambda_2}, \\
& \bar{a}_4 = q^{ -(\mu_2 + \mu_3) + \lambda_1 + \lambda_2 + \lambda_3}, \quad \bar{a}_5 = q^{ -(\mu_2 + \mu_3 + \mu_4) + \lambda_1 + \lambda_2 + \lambda_3 + \lambda_4}, \\
& \bar{b}_1 = q^{\mu_0 + \mu_1 - \lambda_0 + 1}\beta/\alpha, \quad \bar{b}_2 = q^{\mu_1 - \lambda_0 + 1}, \quad \bar{b}_3 = q^{ -\mu_2 + \lambda_1 + 1} , \\
& \bar{b}_4 = q^{ -(\mu_2 + \mu_3) + \lambda_1 + \lambda_2 + 1}, \quad \bar{b}_5 = q^{ -(\mu_2 + \mu_3 + \mu_4) + \lambda_1 + \lambda_2 + \lambda_3 + 1}.
\end{align*}
\ref{case:4} $(r = 2)$ :
\begin{align*}
g^{[6]}_1(x) 
&= (1 - q)^5 q^{\mu_0 + 2\mu_1 + 3\mu_2 - 2\mu_3 - \mu_4 - \lambda_0 - 2\lambda_1 + 2(\lambda_2 + \lambda_3) + \lambda_4} \; \alpha^{-(\mu_0 + \mu_1 + \mu_2) + \lambda_0 + \lambda_1} \\
&\quad \cdot x^{\mu_3 + \mu_4 - (\lambda_2 + \lambda_3 + \lambda_4)} \\
&\quad \cdot \frac{(q, q, q, q, q ; q)_{\infty}}{(q\beta/\alpha , q^{-\lambda_0 + 1 }, q^{-\lambda_1 + 1 }, q^{-\lambda_3 + 1 }, q^{-\lambda_4 + 1 } ; q)_{\infty}} 
\;\frac{(\bar{b}_1, \bar{b}_2, \bar{b}_3, \bar{b}_4, \bar{b}_5 ; q)_{\infty}}{(\bar{a}_1, \bar{a}_2, \bar{a}_3, \bar{a}_4, \bar{a}_5 ; q)_{\infty}} \\
&\quad \cdot {}_6\phi_5 \Bigl( {q^{\lambda_2}, \bar{a}_1, \bar{a}_2, \bar{a}_3, \bar{a}_4, \bar{a}_5 
\atop{\bar{b}_1, \bar{b}_2, \bar{b}_3, \bar{b}_4, \bar{b}_5 }} ; q, q^{6}/(\alpha x) \Bigr) , 
\end{align*}
where
\begin{align*}
& \bar{a}_1 = q^{\mu_0 + \mu_1 + \mu_2 - (\lambda_0 + \lambda_1)}, \quad \bar{a}_2 = q^{\mu_1 + \mu_2 - \lambda_1}, \quad  \bar{a}_3 = q^{\mu_2} , \\
& \bar{a}_4 = q^{-\mu_3 + \lambda_2 + \lambda_3}, \quad \bar{a}_5 = q^{-(\mu_3 + \mu_4) + \lambda_2 + \lambda_3 + \lambda_4}, \\
& \bar{b}_1 = q^{\mu_0 + \mu_1 + \mu_2 - (\lambda_0 + \lambda_1) + 1 }\beta/\alpha, \quad \bar{b}_2 = q^{\mu_1 + \mu_2 - (\lambda_0 + \lambda_1) + 1}, \\
& \bar{b}_3 = q^{\mu_2 - \lambda_1 + 1} , \quad \bar{b}_4 = q^{-\mu_3 + \lambda_2 + 1}, \quad \bar{b}_5 = q^{-(\mu_3 + \mu_4) + \lambda_2 + \lambda_3 + 1} .
\end{align*}
\ref{case:4} $(r = 3)$ :
\begin{align*}
g^{[6]}_1(x) 
&= (1 - q)^5 q^{\mu_0 + 2\mu_1 + 3\mu_2 + 4\mu_3 - \mu_4 - \lambda_0 - 2\lambda_1 - 3\lambda_2 + \lambda_3 + \lambda_4} \; \alpha^{-(\mu_0 + \mu_1 + \mu_2 + \mu_3) + \lambda_0 + \lambda_1 + \lambda_2} \\
&\quad \cdot x^{\mu_4 -(\lambda_3 + \lambda_4)} \\
&\quad \cdot \frac{(q, q, q, q, q ; q)_{\infty}}{(q\beta/\alpha , q^{-\lambda_0  + 1} , q^{-\lambda_1 + 1}, q^{-\lambda_2 + 1}, q^{-\lambda_4 + 1} ; q)_{\infty}}
\;\frac{(\bar{b}_1, \bar{b}_2, \bar{b}_3, \bar{b}_4, \bar{b}_5 ; q)_{\infty}}{(\bar{a}_1, \bar{a}_2, \bar{a}_3, \bar{a}_4, \bar{a}_5 ; q)_{\infty}} \\
&\quad \cdot {}_6\phi_5 \Bigl( {q^{\lambda_3}, \bar{a}_1, \bar{a}_2, \bar{a}_3, \bar{a}_4, \bar{a}_5 
\atop{\bar{b}_1, \bar{b}_2, \bar{b}_3, \bar{b}_4, \bar{b}_5 }} ; q, q^{6}/(\alpha x) \Bigr) , 
\end{align*}
where
\begin{align*}
& \bar{a}_1 = q^{\mu_0 + \mu_1 + \mu_2 + \mu_3 - (\lambda_0 + \lambda_1 + \lambda_2) }, \quad \bar{a}_2 = q^{\mu_1 + \mu_2 + \mu_3 - (\lambda_1 + \lambda_2) }, \\
& \bar{a}_3 = q^{\mu_2 + \mu_3 - \lambda_2}, \quad \bar{a}_4 = q^{ \mu_3}, \quad \bar{a}_5 = q^{-\mu_4 + \lambda_3 + \lambda_4}, \\
& \bar{b}_1 = q^{\mu_0 + \mu_1 + \mu_2 + \mu_3 - (\lambda_0 + \lambda_1 + \lambda_2) + 1}\beta/\alpha , \quad \bar{b}_2 = q^{\mu_1 + \mu_2 + \mu_3 - (\lambda_0 + \lambda_1 + \lambda_2) + 1}, \\
& \bar{b}_3 = q^{\mu_2 + \mu_3 - (\lambda_1 + \lambda_2) + 1}, \quad \bar{b}_4 = q^{\mu_3 - \lambda_2 + 1}, \\
& \bar{b}_5 = q^{-\mu_4 + \lambda_3 + 1}. 
\end{align*}
\ref{case:4} $(r = 4)$ :
\begin{align*}
g^{[6]}_1(x) 
&= (1 - q)^5 q^{\mu_0 + 2\mu_1 + 3\mu_2 + 4\mu_3 + 5\mu_4 - (\lambda_0 + 2\lambda_1 + 3\lambda_2 + 4\lambda_3)} \; \alpha^{-(\mu_0 + \mu_1 + \mu_2 + \mu_3 + \mu_4) + \lambda_0 + \lambda_1 + \lambda_2 + \lambda_3} \\
&\quad \cdot x^{ -\lambda_4} \; \frac{(q, q, q, q, q ; q)_{\infty}}{(q\beta/\alpha , q^{-\lambda_0  + 1} , q^{-\lambda_1 + 1}, q^{-\lambda_2 + 1}, q^{-\lambda_3 + 1} ; q)_{\infty}}
\;\frac{(\bar{b}_1, \bar{b}_2, \bar{b}_3, \bar{b}_4, \bar{b}_5 ; q)_{\infty}}{(\bar{a}_1, \bar{a}_2, \bar{a}_3, \bar{a}_4, \bar{a}_5 ; q)_{\infty}} \\
&\quad \cdot {}_6\phi_5 \Bigl( {q^{\lambda_4}, \bar{a}_1, \bar{a}_2, \bar{a}_3, \bar{a}_4, \bar{a}_5 
\atop{\bar{b}_1, \bar{b}_2, \bar{b}_3, \bar{b}_4, \bar{b}_5 }} ; q, q^{6}/(\alpha x) \Bigr) , 
\end{align*}
where
\begin{align*}
& \bar{a}_1 = q^{\mu_0 + \mu_1 + \mu_2 + \mu_3 + \mu_4 - (\lambda_0 + \lambda_1 + \lambda_2 + \lambda_3) }, \quad \bar{a}_2 = q^{\mu_1 + \mu_2 + \mu_3 + \mu_4 - (\lambda_1 + \lambda_2 + \lambda_3) }, \\
& \bar{a}_3 = q^{\mu_2 + \mu_3 + \mu_4 - (\lambda_2 + \lambda_3)}, \quad \bar{a}_4 = q^{\mu_3 + \mu_4 - \lambda_3}, \quad \bar{a}_5 = q^{\mu_4 }, \\
& \bar{b}_1 = q^{\mu_0 + \mu_1 + \mu_2 + \mu_3 + \mu_4 - (\lambda_0 + \lambda_1 + \lambda_2 + \lambda_3) + 1}\beta/\alpha , \quad \bar{b}_2 = q^{\mu_1 + \mu_2 + \mu_3 + \mu_4 - (\lambda_0 + \lambda_1 + \lambda_2 + \lambda_3) + 1}, \\
& \bar{b}_3 = q^{\mu_2 + \mu_3 + \mu_4 - (\lambda_1 + \lambda_2 + \lambda_3) + 1}, \quad \bar{b}_4 = q^{\mu_3 + \mu_4 - (\lambda_2 + \lambda_3) + 1}, \quad \bar{b}_5 = q^{\mu_4 - \lambda_3 + 1}. 
\end{align*}


\begin{thebibliography}{9999}
\bibitem{Ar}
Y.~Arai, Solutions to $q$-hypergeometric equations associated with $q$-middle convolution, {\sf arXiv:2403.02662v2}.
\bibitem{AT}
Y.~Arai, K.~Takemura, On $q$-middle convolution and $q$-hypergeometric equations, {\textit{SIGMA}} \textbf{19} (2023), 037, 40 pages.
\bibitem{AT2}
Y.~Arai, K.~Takemura, Reformulation of $q$-middle convolution and applications, {\textit{SIGMA}} \textbf{22} (2026), 040, 43 pages.
\bibitem{DR1}
M.~Dettweiler, S.~Reiter, An algorithm of Katz and its application to the inverse Galois problem. Algorithmic methods in Galois theory, {\textit{J. Symbolic Comput.}} \textbf{30} (2000), 761--798.
\bibitem{DR2}
M.~Dettweiler, S.~Reiter, Middle convolution of Fuchsian systems and the construction of rigid differential systems, {\textit{J. Algebra}} \textbf{318} (2007), 1--24.
\bibitem{GR}
G.~Gasper, M.~Rahman, {\textit{Basic Hypergeometric Series}}, Second edition, Encyclopedia of Mathematics and its Applications, \textbf{96}, Cambridge university press, 2004.
\bibitem{Katz}
N.~M.~Katz, Rigid local systems, Princeton University Press, 1996.
\bibitem{Ohy}
Y.~Ohyama, Connection formula of basic hypergeometric series ${}_r\phi_{r-1}(0;b;q,x)$, {\textit{J. Math. Tokushima Univ.}} \textbf{51} (2017), 29--36.
\bibitem{Ro}
J.~Roques, Generalized basic hypergeometric equations, {\textit{Invent. Math.}} \textbf{184} (2011), 499--528.
\bibitem{SY} 
H.~Sakai, M.~Yamaguchi, Spectral types of linear $q$-difference equations and $q$-analog of middle convolution, {\textit{Int. Math. Res. Not.}} \textbf{2017} (2017), 1975--2013.
\end{thebibliography}
\end{document}